\documentclass[a4paper]{article}
\usepackage{ bbm }
\usepackage{amssymb, amsmath, amsthm}
\usepackage{hyperref}
\usepackage[T1]{fontenc}
\usepackage{lmodern} 
\usepackage{mathtools}
\usepackage{mathrsfs}
\usepackage{enumerate}

\title{Minimising Hausdorff Dimension under H\"older Equivalence}
\author{Samuel Colvin}

\setcounter{tocdepth}{1}

\usepackage[spacesep,definevectors]{easyvector}

\usepackage[parfill]{parskip}

\makeatletter
\def\thm@space@setup{
  \thm@preskip=\parskip \thm@postskip=0pt
}
\makeatother
\makeatletter
\renewenvironment{proof}[1][\proofname]{\par
  \vspace{-\topsep}
  \pushQED{\qed}
  \normalfont
  \topsep0pt \partopsep0pt 
  \trivlist
  \item[\hskip\labelsep
        \itshape
    #1\@addpunct{.}]\ignorespaces
}{
  \popQED\endtrivlist\@endpefalse
}
\makeatother

\usepackage{graphicx}
\usepackage[all]{xy}
\usepackage{tikz}
\usetikzlibrary{arrows,chains,positioning}

\DeclareMathOperator{\confdim}{Confdim}
\DeclareMathOperator{\holdim}{\textnormal{H\"oldim}}

\DeclareMathOperator{\diam}{diam}

\newcommand{\overbar}[1]{\mkern 1.5mu\overline{\mkern-1.5mu#1\mkern-1.5mu}\mkern 1.5mu}

\makeatletter
\tikzset{join/.code=\tikzset{after node path={
\ifx\tikzchainprevious\pgfutil@empty\else(\tikzchainprevious)
edge[every join]#1(\tikzchaincurrent)\fi}}}
\makeatother
\tikzset{
  >=stealth',every on chain/.append style={join},every join/.style={->}
}
\tikzstyle{labeled}=[execute at begin node=$\scriptstyle,execute at end node=$]

\newcommand{\diff}{\mathop{}\!\mathrm{d}}

\usepackage{fancyhdr}
\pagestyle{headings}

\newtheorem{thm}{Theorem}[section]
\newtheorem{cor}[thm]{Corollary} 
\newtheorem{prop}[thm]{Proposition} 
\newtheorem{lem}[thm]{Lemma}

\theoremstyle{definition} 
\newtheorem{defn}[thm]{Definition} 
\newtheorem{exmp}[thm]{Example}
\newtheorem{note}[thm]{Note}

\newtheorem{q}[thm]{Question}
\newtheorem{assumption}[thm]{Assumption}

\numberwithin{equation}{thm}

\newcommand{\redit}[1]{{\textit{#1}}}

\newcommand\restr[2]{{
		\left.\kern-\nulldelimiterspace 
		#1 
		\vphantom{\big|} 
		\right|_{#2} 
}}

\newcommand{\Addresses}{{
		\bigskip
		\footnotesize
		
		\textsc{School of Mathematics, University of Bristol, Bristol, BS8 1TW, UK}\par\nopagebreak
		\textit{E-mail address}: \texttt{sc16997@bristol.ac.uk}
}}

\begin{document}

\maketitle

\begin{abstract}
	We study the infimal value of the Hausdorff dimension of spaces that are H\"older equivalent to a given metric space; we call this bi-H\"older-invariant ``H\"older dimension''. This definition and some of our methods are analogous to those used in the study of conformal dimension. 
	
	We prove that H\"older dimension is bounded above by capacity dimension for compact, doubling metric spaces. As a corollary, we obtain that H\"older dimension is equal to topological dimension for compact, locally self-similar metric spaces. In the process, we show that any compact, doubling metric space can be mapped into Hilbert space so that the map is a bi-H\"older homeomorphism onto its image and the Hausdorff dimension of the image is arbitrarily close to the original space's capacity dimension.
	
	We provide examples to illustrate the sharpness of our results. For instance, one example shows H\"older dimension can be strictly greater than topological dimension for non-self-similar spaces, and another shows the H\"older dimension need not be attained.
\end{abstract}

\tableofcontents

\section{Introduction}
Within metric geometry one may encounter the concept of an `Embedding Theorem'. This is usually interpreted to be an answer to the question: ``Can one `faithfully depict' a space, $X$, as a subspace of a `nice' space?''. This type of question has been around for a while and one of the original meanings of it breaks down as `faithfully depict' meaning topological embedding and `nice' meaning Euclidean. The following result is an example of such a theorem, and can be found in~\cite[Theorem VII 5]{hurewicz2015dimension}.

\begin{thm}[Szpilrajn, Eilenberg~{\cite[Theorem 3]{Szpilrajn}}]\label{topembeddingthm}
	Let $X$ be a separable metric space of topological dimension at most $n$, then there exists an embedding $f\colon X\rightarrow I^{2n+1}$, which induces a homeomorphism between $X$ and $f(X)$, such that $f(X)$ has Hausdorff dimension at most $n$. Here $I$ denotes the unit interval $[0,1]\subset \mathbb{R}$.
\end{thm}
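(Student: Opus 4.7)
The plan is to combine the classical Menger--N\"obeling embedding argument with a Baire category refinement that simultaneously controls the Hausdorff dimension of the image. Throughout I use the standard covering characterisation of topological dimension: $\dim X \leq n$ if and only if for every $\varepsilon > 0$ there is an open cover of $X$ of mesh less than $\varepsilon$ whose order is at most $n+1$ (no point lies in more than $n+1$ elements). Fix a sequence $\{\mathcal{U}_k\}$ of such covers with mesh less than $2^{-k}$; each nerve $N(\mathcal{U}_k)$ is a simplicial complex of dimension at most $n$, and the canonical partition-of-unity map sends $X$ into the polyhedron $|N(\mathcal{U}_k)|$.

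I would then work inside $C(X, I^{2n+1})$, the space of continuous maps with the uniform metric, which is complete and hence Baire. The Menger--N\"obeling theorem gives that the set $\mathcal{E}$ of topological embeddings is a dense $G_\delta$: for each $\delta > 0$ the collection of maps all of whose fibres have diameter less than $\delta$ is open, and it is dense because any continuous $f$ can be approximated by a map factoring through $|N(\mathcal{U}_k)|$ for large $k$ composed with a general-position embedding of this at-most-$n$-dimensional polyhedron into $\mathbb{R}^{2n+1}$. For each $s, \varepsilon > 0$, define $\mathcal{O}_{s,\varepsilon} \subseteq C(X, I^{2n+1})$ to be those $f$ whose image $f(X)$ admits an open cover $\{V_i\}$ with $\sum_i (\diam V_i)^{n+s} < \varepsilon$; this is open in the uniform topology, since a small uniform perturbation of $f$ pushes $f(X)$ into slight enlargements of the $V_i$.

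The density of $\mathcal{O}_{s,\varepsilon}$ is established by the same polyhedral approximation: any map so produced has image contained in an at-most-$n$-dimensional polyhedron inside $I^{2n+1}$, and any such polyhedron has $(n+s)$-dimensional Hausdorff measure zero, hence admits open covers of arbitrarily small $\sum (\diam)^{n+s}$. Intersecting $\mathcal{E}$ with $\mathcal{O}_{1/j, 1/j}$ over $j \in \mathbb{N}$ yields a dense $G_\delta$, which is non-empty by Baire; any member is the desired embedding with image of Hausdorff dimension at most $n$. The main obstacle is ensuring that the perturbations required for injectivity and those required for the Hausdorff dimension bound can be realised simultaneously; this is resolved by the observation that a single polyhedral approximation drives both densities, because its general-position embedding of $|N(\mathcal{U}_k)|$ into $\mathbb{R}^{2n+1}$ is simultaneously $2^{-k}$-injective and has image of Hausdorff dimension at most $n$, which is exactly where the dimension count $2n+1 \geq 2\cdot n + 1$ is used.
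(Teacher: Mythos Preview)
The paper does not prove this theorem; it is quoted as a classical result of Szpilrajn (with attribution to Eilenberg for part of the argument) and the reader is referred to Hurewicz--Wallman. Your Baire-category strategy in $C(X,I^{2n+1})$ is indeed the standard route and is essentially how the cited source argues, so there is nothing to compare against in the present paper.

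That said, your write-up has a genuine gap: it is only complete for \emph{compact} $X$, whereas the statement is for arbitrary separable metric $X$. Two steps fail without compactness. First, the sets ``all fibres have diameter $<\delta$'' need not be open in the sup metric when $X$ is not compact, and in any case a continuous injection from a non-compact space into $I^{2n+1}$ need not be a homeomorphism onto its image; so $\mathcal{E}$ is not the intersection you describe. Second, your openness argument for $\mathcal{O}_{s,\varepsilon}$ enlarges every $V_i$ by a fixed $\eta>0$, forcing $\diam V_i'\ge 2\eta$ for all $i$; when the witnessing cover is infinite (unavoidable if $f(X)$ is not totally bounded) the sum $\sum_i(\diam V_i')^{n+s}$ diverges and the set is not open. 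The density argument has the same issue: a countably infinite nerve $|N(\mathcal{U}_k)|$ need not embed in $I^{2n+1}$ in a way that uniformly approximates a given $f$.

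The standard fix, which you should indicate, is to first embed $X$ topologically into the Hilbert cube $I^{\omega}$ and run the Baire-category argument in $C(I^{\omega},I^{2n+1})$: one shows that the maps whose restriction to $X$ is an embedding with image of Hausdorff dimension at most $n$ form a dense $G_\delta$. The domain is now compact, so both openness claims go through, while the density step still only uses covers of $X$ of order $\le n+1$ (extended to open sets of $I^{\omega}$), so the dimension hypothesis on $X$ alone suffices.
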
 

A space has topological dimension at most $n\in\mathbb{N}$ if any open cover can be refined to one where no more than $n+1$ open sets intersect simultaneously, see Definition~\ref{defntopdim}. Whereas, a space has Hausdorff dimension at most $n\in\mathbb{R}$ if it has zero $q$-dimensional Hausdorff measure for any $q>n$, see~\cite[Definition 4.8.]{Mattila}. In both cases, a space has dimension equal to $n$ if $n$ is infimal among all values, $m$, such that the space has dimension at most $m$.

A useful property to note is the following theorem which can be found in~\cite[Theorem VII 2]{hurewicz2015dimension}.
\begin{thm}[Szpilrajn~{\cite[Theorem 2]{Szpilrajn}}]\label{topboundshausfrombelow}
	For any space, $X$, the topological dimension of $X$ is a lower bound for Hausdorff dimension of $X$.
\end{thm}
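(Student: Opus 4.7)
The plan is to proceed by induction on $n \in \mathbb{Z}_{\geq 1}$, proving the auxiliary statement $P(n)$: every separable metric space $X$ with $\dim_H(X) < n$ has topological dimension at most $n-1$. Since topological dimension is integer-valued, $P(n)$ for every $n$ gives the theorem by contrapositive. Throughout I will use the characterisation of topological dimension via small inductive dimension (valid for separable metric spaces by the Menger--Urysohn theorem): to show $\dim_{\mathrm{top}}(X) \leq m$, it suffices to exhibit, at every point $x \in X$, arbitrarily small open neighbourhoods whose boundaries have topological dimension at most $m-1$.

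The base case $P(1)$ I would handle directly: if $\mathcal{H}^1(X) = 0$, then for each $x \in X$ the $1$-Lipschitz distance function $f_x(y) = d(x,y)$ sends $X$ to a subset of $\mathbb{R}$ of $\mathcal{H}^1$- and hence Lebesgue-measure zero. For each $r_0 > 0$ I can therefore pick $r \in (0, r_0)$ missing $f_x(X)$, giving $B(x,r) = \overline{B(x,r)}$ and hence a clopen neighbourhood of $x$ inside $B(x,r_0)$. This shows $X$ has a basis of clopen sets, so $\dim_{\mathrm{top}}(X) \leq 0$.

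For the inductive step I would assume $P(n)$ and suppose $\dim_H(X) < n+1$; then I pick $s$ with $\dim_H(X) < s < n+1$, so $\mathcal{H}^s(X) = 0$. The main tool is an Eilenberg-type coarea inequality: for any $1$-Lipschitz function $f : X \to \mathbb{R}$,
\[
\int_{\mathbb{R}} \mathcal{H}^{s-1}\bigl(f^{-1}(t)\bigr)\, dt \leq C_s\, \mathcal{H}^s(X),
\]
for a universal constant $C_s$. Applied to $f_x$, this forces $\dim_H(f_x^{-1}(r)) \leq s-1 < n$ for almost every $r > 0$. For any prescribed $r_0 > 0$ I can then select such an $r \in (0, r_0)$ so that the sphere $f_x^{-1}(r)$, which contains $\partial B(x,r)$, satisfies the hypothesis of $P(n)$ and thus has topological dimension at most $n-1$. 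The neighbourhood-basis criterion then gives $\dim_{\mathrm{top}}(X) \leq n$, which is $P(n+1)$.

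The main obstacle is the coarea inequality displayed above, whose proof in a general metric space requires a careful bounded-multiplicity covering argument; it is classical but technical. Once it is in hand, the remainder of the argument is an essentially formal induction coupling the measure-zero bound on almost every sphere with the small-inductive-dimension characterisation of topological dimension.
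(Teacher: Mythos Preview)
The paper does not supply its own proof of this theorem: it is quoted as a classical result, with references to Szpilrajn and to Hurewicz--Wallman \cite[Theorem~VII~2]{hurewicz2015dimension}. So there is no in-paper argument to compare against.

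Your outline is essentially the classical Hurewicz--Wallman proof and is correct. The induction on small inductive dimension, with the Eilenberg coarea inequality
\[
\int_{\mathbb{R}} \mathcal{H}^{s-1}\bigl(f^{-1}(t)\bigr)\,dt \;\leq\; C_s\,\mathcal{H}^{s}(X)
\]
supplying the drop in Hausdorff dimension on almost every metric sphere, is exactly how the cited reference proceeds. Two small points worth tightening: first, the Eilenberg inequality requires $s\geq 1$, so in the inductive step you should choose $s$ with $\max\{1,\dim_H(X)\}<s<n+1$; this is always possible since $n\geq 1$ there, and if $\dim_H(X)<1$ the base case already gives $\dim_{\mathrm{top}}(X)\leq 0\leq n$. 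Second, you rely on the equivalence of covering dimension and small inductive dimension, which needs separability (and metrisability); that is implicit in the Szpilrajn setting but should be stated. With those caveats, your proposal matches the standard argument the paper is invoking.
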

\begin{exmp}
	The standard $1/3$-Cantor set has topological dimension $0$ and Hausdorff dimension $\log(2)/\log(3)$.
\end{exmp}
 Theorem~\ref{topembeddingthm} is still an interesting statement without the added dimension control on the image, but we include this strengthening because it transitions us toward a second key motivation for our result.
 
A powerful tool in mathematics is the notion of `invariants'; given an equivalence, if one has a property that is invariant under this equivalence, then one can use it to try to distinguish objects as being not equivalent.
\begin{exmp}
	Topological dimension is an invariant of topological equivalence. The standard $1/3$-Cantor set, $C$, has topological dimension $0$, whereas the unit interval $I=[0,1]\subset\mathbb{R}$ has topological dimension $1$, therefore there cannot exist a homeomorphism between $C$ and $I$.
\end{exmp}
 Now, Hausdorff dimension is not an invariant of many important equivalences, for instance topological equivalence, but one can easily derive an invariant from it by associating to a space the infimum of the Hausdorff dimensions of all equivalent spaces. If we consider topological equivalence of separable metric spaces, by combining Theorem~\ref{topembeddingthm} with Theorem~\ref{topboundshausfrombelow}, we obtain that the infimum of the Hausdorff dimensions of spaces homeomorphic to a space $X$ is just the topological dimension for $X$. In this situation we don't obtain a new invariant, we just rephrase an old one. Similarly, Hausdorff dimension is an invariant of isometric and bi-Lipschitz equivalence, so applying this process to either of these equivalences returns Hausdorff dimension. However, we can obtain new, interesting invariants by considering intermediate types of equivalence; for example, if we consider quasi-symmetric equivalence of metric spaces, then we get an invariant called conformal dimension, defined by Pansu in~\cite{pansuconformaldim}, which is fundamentally different from topological and Hausdorff dimension. It often lies strictly between the two. It was originally used to study rank one symmetric spaces, and is now also used to study boundaries of hyperbolic groups, and other fractal metric spaces, see~\cite{confdim}.  
 
 In this paper, we concern ourselves with a form of metric control known as H\"older equivalence. H\"older continuity is prevalent in functional analysis, analysis on metric spaces, partial differential equations, and many other areas of mathematics. Furthermore, the kinds of metric spaces we will be interested in, like boundaries of hyperbolic groups, have a self-similarity property which behaves nicely under this kind of metric distortion.
 \begin{defn}
 	A homeomorphism $f\colon X\rightarrow Y$ between metric spaces $X$ and $Y$ is called \redit{bi-H\"older} if there exist constants $\lambda, \alpha,\beta>0$ such that for any $x,y\in X$
 	\[
 	\frac{1}{\lambda}d(x,y)^{\alpha}\leq d(f(x),f(y))\leq \lambda d(x,y)^{\beta}.
 	\]
 	If such a homeomorphism exists, then $X$ and $Y$ are said to be \redit{H\"older equivalent}.
 \end{defn}

Immediately from the definitions, one obtains the bounds
\begin{equation}\label{equationtrivialdimensionboundsunderbiholder}
\frac{\dim_{H}(X)}{\alpha} \leq \dim_{H}(Y) \leq \frac{\dim_{H}(X)}{\beta},
\end{equation}
but the question of exactly how small $\dim_{H}(Y)$ can be does not seem to have been systematically considered in the literature. Therefore, we apply the above process, to introduce the following invariant of H\"older equivalence.
\begin{defn}
	Let $X$ be a metric space. Define the \redit{H\"older dimension} of $X$, denoted \redit{$\holdim(X)$}, by
	\[
	\holdim(X)\coloneqq \inf\{\dim_{H}(Y)\mid Y \text{ H\"older equivalent to }X\},
	\]
	where $\dim_{H}(Y)$ denotes the Hausdorff dimension of $Y$.
\end{defn}
 H\"older equivalence is a topological equivalence so preserves topological dimension and, therefore, all $Y$ that are  H\"older equivalent to $X$ have topological dimension equal to that of $X$. Combined with Theorem~\ref{topboundshausfrombelow}, we have a trivial lower bound 
\begin{equation}\label{topdimlowerboundforholdim}
\dim(X)\leq \holdim(X),
\end{equation}
where $\dim(X)$ denotes the topological dimension of $X$.

Further, for bounded, uniformly perfect metric spaces, quasi-symmetric equivalences are also H\"older equivalences, see~\cite[Corollary 11.5]{heinonen2001lectures}, hence
\[
\dim(X)\leq \holdim(X)\leq \confdim(X)
\]
for such spaces. For the reader unacquainted with uniform perfectness, it is also sufficient for the metric space to be bounded and connected as uniform perfectness applies constraints on the wildness of gaps in the space, but if there are no gaps, then uniform perfectness is unnecessary. 

Our initial motivation was to study how the Hausdorff dimension of boundaries of hyperbolic groups can vary under H\"older equivalences, prompted by a question posed by Ilya Kapovich in June 2017.
However, our methods apply to much more general metric spaces. One such example is the following corollary on the H\"older dimension of locally self-similar metric spaces.
 These are spaces where small scales are similar to the whole space in a uniform way (see Definition~\ref{defnlocselfsim}). As one might expect, this property is exhibited by classical fractals like the Koch snowflake curve, the Sierpinski carpet, and the $1/3$-Cantor set, but `locally self-similar' is more general than this kind of rigid self-similarity. Indeed, this behaviour is exhibited by boundaries of hyperbolic groups, and the invariant sets of self-similar iterated function systems.

\begin{cor}\label{cormain}
	Let $X$ be a compact, locally self-similar metric space, then $X$ has H\"older dimension equal to its topological dimension.
\end{cor}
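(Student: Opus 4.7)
The plan is to deduce this corollary from the paper's main theorem together with two structural facts about locally self-similar spaces. The main theorem, announced in the abstract, gives $\holdim(X) \leq \cdim(X)$ for any compact doubling metric space. Combined with the trivial lower bound $\dim(X) \leq \holdim(X)$ from~\eqref{topdimlowerboundforholdim}, the corollary will follow as soon as one knows (i) that a compact locally self-similar space is doubling, and (ii) that for such a space $\cdim(X) = \dim(X)$.

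First I would verify that a compact locally self-similar metric space $X$ is doubling. Compactness of $X$ gives a uniform bound on the number of balls of some fixed radius $r_0$ needed to cover any ball of radius $2r_0$. Local self-similarity then lets me transport this covering estimate down to all smaller scales: a ball of small radius $r$ is, up to bi-Lipschitz distortion with controlled constant, a rescaling of a ball of radius comparable to $r_0$, so the covering count at scale $r$ is bounded by the same constant. This yields a doubling constant depending only on $r_0$ and the self-similarity data.

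Second, I would establish $\cdim(X) = \dim(X)$ for $X$ compact locally self-similar. The inequality $\dim(X) \leq \cdim(X)$ holds for any metric space, since capacity dimension controls the multiplicity of arbitrarily fine open covers, and hence controls topological dimension. For the reverse inequality, I would start with an open cover witnessing the topological dimension: a cover with multiplicity $\dim(X)+1$ and Lebesgue number bounded below. Iterating this cover under the rescaling maps provided by Definition~\ref{defnlocselfsim} produces, at every sufficiently small scale $r$, a cover whose mesh is comparable to $r$, whose Lebesgue number is comparable to $r$, and whose multiplicity is still $\dim(X)+1$. This is precisely what the definition of capacity dimension requires, so $\cdim(X) \leq \dim(X)$.

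Chaining everything together gives $\dim(X) \leq \holdim(X) \leq \cdim(X) \leq \dim(X)$, and hence equality throughout. The main obstacle in the argument, assuming the paper's central theorem $\holdim \leq \cdim$ as a black box, is the iteration step in the proof of $\cdim(X) \leq \dim(X)$: one must check that patching together the rescaled covers on overlapping pieces of $X$ does not inflate the multiplicity, and that the resulting cover genuinely has mesh and Lebesgue number both comparable to $r$ at every scale. Since this equality for capacity dimension is well-known in the conformal dimension literature, I would either cite an existing version or adapt the standard proof to the paper's precise notion of local self-similarity.
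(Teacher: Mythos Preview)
Your proposal is correct and follows essentially the same route as the paper: combine the lower bound~\eqref{topdimlowerboundforholdim}, the main inequality $\holdim(X)\leq\cdim(X)$ (Corollary~\ref{cor1}), the doubling of compact locally self-similar spaces (the paper proves this as Lemma~\ref{doubling}, with the same compactness-plus-rescaling argument you sketch), and the equality $\cdim(X)=\dim(X)$. The only difference is that the paper does not prove the last equality but cites it as Theorem~\ref{BLcor} from Buyalo--Lebedeva, which is exactly the citation option you mention at the end.
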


Although this means that H\"older dimension is not a new invariant in the context of locally self-similar spaces, Corollary~\ref{cormain} can be also be viewed as a strengthening of the classical Theorem~\ref{topembeddingthm}. In this form, Corollary~\ref{cormain} is the `invariant' phrasing of our work. The following, more general, statement can be thought of as the `embedding theorem' version of our work and is our main theorem.

\begin{thm}\label{thmmain}
	Let $X$ be a compact, $N$-doubling metric space which has capacity dimension $n$ with coefficient $\sigma$, then, for any $q>n$, there exist constants $\mu=\mu(n,q,\sigma,N)>0$, $\alpha=\alpha(n,q,\sigma,N)\geq 1$, and $0<\beta =\beta(n,q,\sigma,N)\leq 1$, and a map $f\colon X \rightarrow \ell^{2}$ such that, for any $x,y\in X$,
	\[
	\frac{1}{\mu\diam(X)^{\alpha}}d(x,y)^{\alpha}\leq d(f(x),f(y))\leq \frac{\mu}{\diam(X)^{\beta}} d(x,y)^{\beta},
	\]
	and the image of $f$ has Hausdorff $q$-measure at most $4^{q}$, and therefore Hausdorff dimension at most $q$.
\end{thm}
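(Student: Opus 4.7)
The plan is to construct $f$ directly via a Buyalo--Schroeder-style partition-of-unity embedding built from the cover hierarchy supplied by the capacity dimension hypothesis. After rescaling so that $\diam(X) = 1$ (both sides of the asserted inequality scale homogeneously in $\diam(X)$, so this is harmless), fix a geometric sequence of scales $r_k = \rho^k$ for some $\rho = \rho(\sigma) \in (0,1)$. At each scale $k \geq 0$, apply the capacity dimension hypothesis to obtain an open cover $\mathcal{U}_k$ of $X$ with mesh at most $r_k$, Lebesgue number at least $\sigma r_k$, and multiplicity at most $n+1$. For each $U \in \mathcal{U}_k$, let $\psi_U \colon X \to [0,1]$ be the bump $\psi_U(x) = \min\{1, 2\, d(x, X\setminus U)/(\sigma r_k)\}$, supported in $U$, equal to $1$ on the Lebesgue core $\{x : d(x, X\setminus U) \geq \sigma r_k/2\}$, and $(2/(\sigma r_k))$-Lipschitz. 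With $\{e_{k,U}\}$ an orthonormal basis of $\ell^2$ indexed by $\bigsqcup_k \mathcal{U}_k$, set
\[
f(x) = \sum_{k \geq 0} \sum_{U \in \mathcal{U}_k} r_k^s\, \psi_U(x)\, e_{k, U},
\]
where $s = s(n,q,\sigma,N) \in (0,1]$ is tuned at the end.

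The bi-H\"older bounds follow by a standard telescoping computation. For $x, y \in X$ with $t = d(x,y)$, split the scale sum into coarse ($r_k \geq t$) and fine ($r_k < t$) parts; the multiplicity bound forces at most $2(n+1)$ coordinates per scale to contribute to $f(x) - f(y)$. On coarse scales use $|\psi_U(x) - \psi_U(y)| \leq 2t/(\sigma r_k)$, and on fine scales use only $\|\psi_U\|_\infty \leq 1$; the two resulting geometric series combine to give $\|f(x) - f(y)\| \leq \mu t^s$ with $\mu$ explicit in $n, \sigma, \rho, s$. For the lower bound, take the unique $k^*$ with $r_{k^* + 1} < t \leq r_{k^*}$: the Lebesgue-number hypothesis places $B(x, \sigma r_{k^* + 1}/2)$ inside some $U \in \mathcal{U}_{k^* + 1}$, so $\psi_U(x) = 1$, while the mesh bound forces $y \notin U$ and hence $\psi_U(y) = 0$, yielding $\|f(x) - f(y)\| \geq r_{k^* + 1}^s \geq c\, t^s$. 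Recasting the $s$-H\"older pair as $\beta = s \leq 1$ for the upper bound and $\alpha = 1$ for the lower bound (using $t^s \geq t$ when $t \leq 1$ and $s \leq 1$), and then restoring the $\diam(X)$ factors, delivers the form stated in the theorem.

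The main obstacle is the Hausdorff $q$-measure bound $\mathcal{H}^q(f(X)) \leq 4^q$. Covering $f(X) \subset \bigcup_{U \in \mathcal{U}_k} f(U)$ with each $\diam(f(U)) \leq 2\mu r_k^s$ from the upper H\"older bound gives
\[
\mathcal{H}^q_{2\mu r_k^s}(f(X)) \leq |\mathcal{U}_k|\cdot(2\mu r_k^s)^q.
\]
The $N$-doubling hypothesis combined with the Lebesgue-number estimate forces only $|\mathcal{U}_k| \leq C\, r_k^{-\log_2 N}$ by a packing argument on a maximal $\sigma r_k$-separated set, and the naive exponent count $sq > \log_2 N$ is too stringent when $q$ is close to $n < \log_2 N$. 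The resolution is to exploit the multiplicity bound $n+1$ more directly: the image $f_k(X)$ at scale $k$ sits inside a union of $(n+1)$-dimensional coordinate subsimplices of $r_k^s \Delta^{\mathcal{U}_k}$ dictated by the nerve of $\mathcal{U}_k$, so that covering by these subsimplices (rather than by one ball per element of $\mathcal{U}_k$) replaces the doubling-dimension exponent by one controlled by $n$. After tuning $s$, $\rho$, and $\mu$ in terms of $n, q, \sigma, N$, the geometric sum over scales telescopes to the uniform bound $4^q$; the two factors of $2$ in the constant come respectively from the Lipschitz normalization of $\psi_U$ and from the radius-versus-diameter conversion.
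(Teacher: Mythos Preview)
Your construction has a fundamental gap in the Hausdorff measure estimate, and the proposed ``resolution'' cannot succeed. The map you build satisfies $\|f(x)-f(y)\|\asymp d(x,y)^{s}$ with uniform constants (your own upper and lower bound arguments show this), so $f$ is a bi-Lipschitz embedding of the snowflake $(X,d^{s})$ into $\ell^{2}$. Consequently $\dim_{H}(f(X))=\dim_{H}(X,d^{s})=\dim_{H}(X)/s\geq \dim_{H}(X)$. Since capacity dimension $n$ can be strictly smaller than $\dim_{H}(X)$ (e.g.\ the Sierpi\'nski carpet has capacity dimension $1$ but Hausdorff dimension $\log 8/\log 3$), no choice of $s\in(0,1]$ will force $\dim_{H}(f(X))\leq q$ for $q$ close to $n$. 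The simplex structure of the single-scale component $f_{k}(X)$ is irrelevant here: the dimension of $f(X)$ is pinned down by the bi-Lipschitz equivalence with $(X,d^{s})$, and no covering scheme can improve it. Put differently, because the scale blocks are mutually orthogonal in your construction, the full image never sits inside an $n$-dimensional object; the nerve complex at scale $k$ controls only the projection $f_{k}(X)$, not $f(X)$.

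The paper's construction avoids this by building the approximants \emph{recursively} rather than as an orthogonal direct sum: $f_{i+1}$ maps $X$ into an $n$-dimensional simplicial complex whose vertices are small perturbations of points in $f_{i}(X)$, so that each $f_{i}(X)$ is genuinely $n$-dimensional and can be covered at scale $\eta_{i}\ll\epsilon_{i}$ using only $(\epsilon_{i}/\eta_{i})^{n}$ cubes per simplex. The scale sequences $(\delta_{i})$ and $(\epsilon_{i})$ are then tuned (non-geometrically, via the recursion~\eqref{epsiloni+1}--\eqref{deltai+1}) so that this cover has bounded $q$-mass and simultaneously serves as a cover of the limit $f(X)$. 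The resulting map is \emph{not} a snowflake embedding: the exponents $\alpha=2Q$ and $\beta=1/(4Q)$ are genuinely different, and this looseness is what allows the Hausdorff dimension of the image to drop below $\dim_{H}(X)$.
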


The space $\ell^{2}$ is the space of square-summable, real-valued sequences with the the norm $\left|(z_{i})_{i\in\mathbb{N}}\right| = \sum_{i\in\mathbb{N}} z_{i}^{2}$.

In comparison to Theorem~\ref{topembeddingthm}, we upgrade the embedding to one with bi-H\"older metric control, but at the expense of the generality of $X$ and the finite-dimensionality of the range.

 We give definitions of the conditions imposed on $X$ in Section~\ref{definitions}. For now, the reader can think of `capacity dimension' as providing controlled open covers of $X$ at small scales, and `doubling' as giving an upper bound on the number of elements in these controlled covers.

The idea of the proof is that these conditions on $X$ allow us to approximate $X$, at a sequence of scales, by open covers with good properties. We then transfer these approximations over into $\ell^{2}$ using maps with simplicial ranges, which inherit strong metric and dimension control from the properties imposed on these covers. Finally, we take the limit of these approximating maps to get an embedding, and check it has the desired metric and dimension control.

Corollary~\ref{cormain} follows from Theorem~\ref{thmmain} because doubling and capacity dimension equal to topological dimension are both consequences of a compact metric space being locally self-similar. As an intermediate corollary, we have the following.

\begin{cor}\label{cor1}
	Let $X$ be a compact, doubling metric space with capacity dimension $n$, then $X$ has H\"older dimension at most $n$.
\end{cor}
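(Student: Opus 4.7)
The plan is to derive this corollary as an essentially immediate consequence of Theorem~\ref{thmmain}. Fix $q>n$ and let $\sigma$ and $N$ be the coefficient of capacity dimension and the doubling constant associated to $X$. Theorem~\ref{thmmain} then supplies constants $\mu, \alpha, \beta$ depending only on $(n,q,\sigma,N)$ and a map $f\colon X\to\ell^{2}$ such that
\[
\frac{1}{\mu\diam(X)^{\alpha}}d(x,y)^{\alpha}\le d(f(x),f(y))\le \frac{\mu}{\diam(X)^{\beta}}d(x,y)^{\beta},
\]
with $\dim_{H}(f(X))\le q$. I would first dispose of the degenerate case: if $\diam(X)=0$ then $X$ is a point, $n=0$, and there is nothing to prove, so we may assume $0<\diam(X)<\infty$ thanks to compactness.

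Next, I would check that $f$ witnesses a H\"older equivalence between $X$ and $f(X)\subset\ell^{2}$. Since $\diam(X)$ is a fixed finite positive constant, setting
\[
\lambda \coloneqq \max\left\{\mu\diam(X)^{\alpha},\ \mu\diam(X)^{-\beta}\right\}
\]
the displayed inequalities collapse to exactly the form $\tfrac{1}{\lambda}d(x,y)^{\alpha}\le d(f(x),f(y))\le \lambda\, d(x,y)^{\beta}$ appearing in the definition of bi-H\"older. The lower bound forces $f$ to be injective, and the two bounds jointly give continuity of $f$ and of $f^{-1}$, so $f$ is a bi-H\"older homeomorphism of $X$ onto its image.

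Consequently $X$ is H\"older equivalent to $f(X)$, and from the definition of H\"older dimension,
\[
\holdim(X)\ \le\ \dim_{H}(f(X))\ \le\ q.
\]
Since $q>n$ was arbitrary, taking an infimum yields $\holdim(X)\le n$. There is no real obstacle here; all the work lies in Theorem~\ref{thmmain}, and the only thing to be careful about is absorbing the $\diam(X)$ factors (legal precisely because $X$ is compact and non-degenerate) into a single H\"older constant so that the conclusion of the theorem lines up with the bi-H\"older definition used to define $\holdim$.
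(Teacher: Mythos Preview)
Your proof is correct and follows essentially the same route as the paper: apply Theorem~\ref{thmmain} for an arbitrary $q>n$, observe that the resulting map is a bi-H\"older homeomorphism onto its image with $\dim_H(f(X))\le q$, and let $q\downarrow n$. Your version is slightly more careful than the paper's in explicitly disposing of the degenerate case $\diam(X)=0$ and in absorbing the $\diam(X)$ factors into a single constant $\lambda$, but these are cosmetic refinements rather than a different argument.
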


There is a subtlety to H\"older dimension not explicitly mentioned in these results. As H\"older dimension is an infimum, there is a question left open: ``Is this infimum attained and in fact a minimum?'' The following examples illustrate that both possibilities can occur.

The example of $[0,1]\subset\mathbb{R}$ shows that H\"older dimension can be attained as $[0,1]$ has topological and Hausdorff dimension both equal to $1$. Indeed, any example where the topological dimension is equal to the Hausdorff dimension will be an example of this case.

The standard $1/3$-Cantor set, $C$, has H\"older dimension $0$. This can be seen by noting that $C$ is a compact, locally self-similar metric space with topological dimension $0$ and using Corollary~\ref{cormain}.
However, $C$ cannot attain its H\"older dimension, by the trivial bounds~\ref{equationtrivialdimensionboundsunderbiholder}. In Section~\ref{sectionCtimesI}, we show this lack of attainment can happen in all higher dimensions by giving, for any $n\in\mathbb{N}$, an example of a compact, locally self-similar metric space of topological dimension $n$ which does not attain its H\"older dimension under any H\"older equivalence.

\begin{thm}\label{thmCtimesI}
	Let $n\in\mathbb{N}$, $I^{n}=[0,1]^{n}$ be the unit hyper-cube in $\mathbb{R}^{n}$, $C$ be the $1/3$-Cantor set, and $X=C\times I^{n}$ their product with the $\ell^{2}$ metric. Let $Y$ be a H\"older equivalent metric space to $X$. Then $Y$ has Hausdorff dimension strictly greater than $n$. In particular, $C\times I^{n}$ has H\"older dimension $n$ but no H\"older equivalent space attains $n$ as its Hausdorff dimension.
\end{thm}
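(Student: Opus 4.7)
The plan is to proceed by contradiction. Suppose $f\colon X \to Y$ is a bi-H\"older homeomorphism with constants $\lambda, \alpha, \beta$ and $\dim_H(Y) \leq n$; write $\sigma := \log 2/\log 3 = \dim_H(C)$ for brevity. First I would observe that for each $c \in C$ the slice $Y_c := f(\{c\} \times I^n)$ is homeomorphic to $I^n$, so Theorem~\ref{topboundshausfrombelow} gives $\dim_H(Y_c) \geq n$; combined with $Y_c \subseteq Y$ this pins down $\dim_H(Y_c) = n$ for every $c$. Pulling back an arbitrary cover realising $\mathcal{H}^{n+\varepsilon}(Y) = 0$ through the $(1/\alpha)$-H\"older inverse $f^{-1}$ yields $\dim_H(X) \leq \alpha n$, and since $X = C \times I^n$ is Ahlfors $(n+\sigma)$-regular with $\dim_H(X) = n + \sigma$ this forces $\alpha \geq 1 + \sigma/n > 1$.

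The main analytic tool is an Eilenberg-type slicing estimate applied to the $(1/\alpha)$-H\"older projection $g := \pi_C \circ f^{-1}\colon Y \to C$, where $\pi_C$ is the Cantor projection. After snowflaking the metric on $Y$ by the exponent $1/\alpha$ the map $g$ becomes Lipschitz, so Eilenberg's coarea inequality applies; translating the resulting estimate back to the original Hausdorff measures one obtains
\[
\int_C \mathcal{H}^n(Y_c) \, d\mathcal{H}^\sigma(c) \leq C_0 \, \mathcal{H}^{n+\sigma/\alpha}(Y)
\]
for a constant $C_0 = C_0(n, \sigma, \lambda, \alpha)$. Since $n + \sigma/\alpha > n \geq \dim_H(Y)$, the right-hand side vanishes, and therefore $\mathcal{H}^n(Y_c) = 0$ for $\mathcal{H}^\sigma$-almost every $c \in C$.

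To close the argument I would aim to produce a uniform positive lower bound $\mathcal{H}^n(Y_c) \geq \mu > 0$ on a $\mathcal{H}^\sigma$-positive set of $c$, in direct contradiction with the conclusion of the Eilenberg estimate. Every restriction $f_c := f|_{\{c\} \times I^n}$ is a bi-H\"older homeomorphism of $I^n$ onto $Y_c$ with the \emph{same} constants $\lambda, \alpha, \beta$; the hope is to combine this uniformity with the topological $n$-cell structure of $Y_c$ and the strict inequality $\alpha > 1$ to force such a bound, perhaps via a quantitative Szpilrajn-type estimate for bi-H\"older images of $I^n$ phrased in terms of the $n$-Hausdorff content $\mathcal{H}^n_\infty$.

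I expect this last step to be the principal obstacle, because the naive topological hope that ``a topological $n$-cell has positive $n$-Hausdorff measure'' fails in general: log-snowflaked metrics on $I^n$ yield topological $n$-cells of Hausdorff dimension $n$ with $\mathcal{H}^n = 0$. The argument must therefore draw on something strictly beyond topology, most likely an iteration of the Eilenberg estimate through the self-similar decomposition $C = \tfrac{1}{3}C \sqcup (\tfrac{1}{3}C + \tfrac{2}{3})$, compounding improvements at every scale $3^{-k}$ and combining them with the rigidity coming from $\alpha > 1$ to extract a quantitative contradiction.
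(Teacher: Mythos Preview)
Your outline and the paper's argument run in parallel up to the decisive point: both foliate $Y$ by the slices $Y_c=f(\{c\}\times I^n)$, both feed the Ahlfors regularity of $C$ through the $(1/\alpha)$-H\"older projection $\pi_C\circ f^{-1}$, and both reduce everything to the single claim that $\mathcal{H}^n(Y_c)$ is bounded below by a uniform positive constant $B$. You correctly identify this as the obstacle and correctly observe that topology alone cannot supply it. Where your proposal has a gap is precisely here: the ``iteration through the self-similar decomposition of $C$'' you sketch does not touch the slices $Y_c$ at all, and your appeal to $\alpha>1$ is a red herring, since the lower bound one needs depends only on $\lambda$ and $n$, not on the H\"older exponents.

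The paper closes the gap with an idea you did not find: for a fixed slice, build an explicit $\lambda\sqrt{n}$-Lipschitz map $\Psi\colon Y\to I^n$ whose $i$-th coordinate is (a truncation of) $y\mapsto d_Y\bigl(y,\,f_c(A_i)\bigr)$, where $A_i$ is the $i$-th axial face of $I^n$. The lower bi-H\"older inequality forces $\Psi\circ f_c\colon I^n\to I^n$ to send each face of $\partial I^n$ into itself, so a straight-line homotopy plus an elementary degree argument makes $\Psi\circ f_c$ surjective. Surjectivity of a Lipschitz map onto $I^n$ immediately yields $\mathcal{H}^n(Y_c)\geq \mathcal{H}^n(I^n)/(\lambda\sqrt{n})^n>0$, uniformly in $c$. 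Plugging this $B$ into your Eilenberg inequality (or, as the paper does, directly into the covering sum) finishes the proof and in fact gives the quantitative conclusion $\dim_H(Y)\geq n+\sigma/\alpha$, so no contradiction setup is needed.
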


Essentially, this is because the family $\{\{x\}\times I^{n} \mid x \in C\}$ sitting inside $C\times I^{n}$ is `spread out' and consists of `big' copies of $I^{n}$. Such a family forces the Hausdorff dimension of $C\times I^{n}$ to be strictly more than $n$. These properties are sufficiently preserved by bi-H\"older maps so that any equivalent space also contains a `spread out' family of `big' copies of $I^{n}$, also forcing equivalent spaces to have Hausdorff dimension strictly greater than $n$. 

We have also constructed a non-self-similar Cantor set to illustrate the necessity of some kind of strengthening of topological dimension. 
\begin{thm}\label{thmdimtopwontdo}
	There exists a compact, doubling metric space with topological dimension $0$ but H\"older dimension $1$.
\end{thm}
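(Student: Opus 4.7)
The plan is to take $X\subset[0,1]$ to be a \emph{fat} Cantor set with positive Lebesgue measure: construct it by the standard nested-interval procedure, at stage $n$ removing from each of the $2^{n-1}$ surviving closed intervals a concentric open subinterval of relative length $\varepsilon_n$, with $\varepsilon_n\to 0$ chosen so that $\sum_n\varepsilon_n<\infty$ (for instance $\varepsilon_n=2^{-n}$) and hence $\prod_n(1-\varepsilon_n)>0$. Then $X$ is compact, doubling (as a bounded subset of $\mathbb{R}$), and totally disconnected with no isolated points, so $\dim(X)=0$; and $\dim_H(X)=1$, since its $1$-dimensional Hausdorff measure coincides with its positive Lebesgue measure.

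The upper bound $\holdim(X)\leq 1$ is then a direct consequence of Corollary~\ref{cor1}: as a bounded subset of $\mathbb{R}$, $X$ has capacity dimension at most $1$ (cover $X$, at every small scale, by mildly overlapping short intervals, giving multiplicity at most $2$ with Lebesgue number comparable to mesh).

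The substantive content is the lower bound $\holdim(X)\geq 1$. Here the trivial estimate $\dim_H(Y)\geq\dim_H(X)/\alpha=1/\alpha$ is vacuous as soon as the bi-H\"older lower exponent $\alpha>1$, and one should note that for a \emph{uniformly disconnected} (e.g.\ self-similar) Cantor set one really does realise H\"older dimension $0$ via snowflake-type rescalings pushing $\alpha$ to infinity. The key distinguishing feature of the fat Cantor set $X$ is that the gap-to-piece ratio at level $n$ equals $\varepsilon_n\to 0$, so $X$ is \emph{not} uniformly disconnected; the plan is to convert this quantitative failure of uniform disconnection into a bi-H\"older-stable obstruction. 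Given a bi-H\"older $f\colon X\to Y$ with exponents $\alpha\geq\beta$, one considers at each level $n$ a triple $p,m,q\in X$ with $p,m$ in a common level-$n$ piece and $q$ just across the level-$n$ gap, so $d(p,q)\asymp\varepsilon_n L_n$ while $d(p,m),d(q,m)\asymp L_n$; applying the bi-H\"older inequalities to these three pairs together with the triangle inequality in $Y$ should, as $\varepsilon_n\to 0$ over the infinite cascade of levels, force the exponents to coincide in the limit and hence, via the pushforward estimate $\mathcal{H}^{s/\alpha}(Y)\geq c_\alpha\mathcal{H}^s(X)$ applied at $s=1$, yield $\dim_H(Y)\geq 1$.

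I expect the final step---turning the scale-wise bi-H\"older-plus-triangle constraints into a genuine global dimensional lower bound on $Y$---to be the technical heart of the argument. In the totally-disconnected setting it plays the role that the topological-dimension bound $\dim(X)\leq\dim_H(Y)$ plays in making $\holdim([0,1])=1$ immediate; the fat Cantor set is designed so that the non-uniform disconnection, rather than any topological connectivity, is what prevents dimension reduction.
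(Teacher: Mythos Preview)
Your proposed example does not work: the fat Cantor set with gap ratios $\varepsilon_n=2^{-n}$ has H\"older dimension $0$, not $1$. To see this, let $g\colon X\to\{0,1\}^{\mathbb{N}}$ be the natural coding bijection, where the target carries the ultrametric $d_r(a,b)=r^{n}$ with $n=\min\{i:a_i\neq b_i\}$. If $x,y\in X$ first separate at level $n$, then $d_X(x,y)$ lies between the level-$n$ gap $\asymp 4^{-n}$ and the level-$(n-1)$ piece length $\asymp 2^{-n}$. A direct check shows $g$ is $(\lambda,\alpha,\beta)$-bi-H\"older with $\alpha=\log_2(1/r)$ and $\beta=\tfrac12\log_2(1/r)$, and the target has Hausdorff dimension $\log 2/\log(1/r)$, which tends to $0$ as $r\to 0$. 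The exponential decay $\varepsilon_n=2^{-n}$ is simply too slow: a $\beta$-H\"older map converts a gap of relative size $\varepsilon_n$ into one of relative size roughly $\varepsilon_n^{\beta}$, and with $\varepsilon_n$ merely exponential one can always absorb this by choosing $\beta$ small enough.

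What is actually needed---and what the paper uses---is gaps that shrink \emph{faster than any power}, e.g.\ absolute gap sizes $\diam(J_{n,i})=1/(10\,n^{n})$. Then for \emph{every} $\beta>0$ one still has $\sum_n 2^{n}(\text{gap}_n)^{\beta}<\infty$, and in fact the tail of this sum from level $k$ onward is $o(3^{-\alpha k})$ for every $\alpha$, which is what drives the lower bound. The paper's mechanism for the lower bound is also quite different from your triangle-inequality sketch: given bi-H\"older $f\colon X\to Y$, compose with the $1$-Lipschitz map $\psi(y)=d_Y(f(0),y)$ to land in $\mathbb{R}$, extend $\psi\circ f$ to all of $[0,1]$ by McShane, and then compare the $\mathcal{H}^1$-mass of the image of a deep interval $I_{k,1}$ (at least $\lambda^{-1}3^{-\alpha k}$, by the lower H\"older bound and the intermediate value theorem) with the total $\mathcal{H}^1$-mass of the images of the gaps inside it (at most $\sum_{n>k}2^{n-k}\lambda(10\,n^n)^{-\beta}$, by the upper H\"older bound). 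For $k$ large the latter is less than half the former, forcing $\mathcal{H}^1(Y)>0$.

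Your triangle-inequality idea does not lead anywhere as stated: applying the bi-H\"older bounds to a triple $p,m,q$ across a gap and invoking the triangle inequality in $Y$ yields no constraint sharper than the direct H\"older upper bound on $d_Y(f(p),f(q))$, and even if it somehow forced $\alpha=\beta$ this would only make $Y$ bi-Lipschitz to $(X,d_X^{\alpha})$, which has Hausdorff dimension $1/\alpha<1$ when $\alpha>1$.
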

This example is described in detail in Section~\ref{sectioncapvstop}. Inspiration for this example comes from~\cite{Hakobyan} in which Hakobyan concludes that there exist Cantor sets of Hausdorff dimension $1$ which are minimal for conformal dimension. The fundamental ideas underlying the example are; in the construction of a Cantor set, if one cuts out progressively smaller gaps in proportion to the scale they are cut from, then the Hausdorff dimension of the resulting Cantor set can be forced to be $1$, and if one takes the ratio of gap-to-scale to grow faster than any power, then this property cannot be broken by passing through a bi-H\"older map. 

Finally, we present an example to illustrate that H\"older dimension can be strictly less than capacity dimension, verifying that the estimate ``at most $n$'' in Corollary~\ref{cor1} is necessary.

\begin{thm}
	There exists a compact, doubling metric space with capacity dimension $1$ but H\"older dimension $0$.
\end{thm}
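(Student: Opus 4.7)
The plan is to exhibit the explicit compact set $X = \{0\} \cup \{1/n : n \in \mathbb{N}\} \subset \mathbb{R}$ with the inherited Euclidean metric. It is compact and doubling as a subset of $\mathbb{R}$. Since $X$ is countable it has Hausdorff dimension $0$; applying the definition of $\holdim$ to the identity map (trivially bi-H\"older with $\alpha = \beta = 1$) gives $\holdim(X) \leq \dim_{H}(X) = 0$, and together with the trivial lower bound~(\ref{topdimlowerboundforholdim}) this forces $\holdim(X) = 0$.

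The upper bound $\cdim(X) \leq 1$ is inherited from $\mathbb{R}$: at each scale $r$, cover $\mathbb{R}$ by the overlapping intervals $\{(kr/2,\, kr/2 + r) : k \in \mathbb{Z}\}$ (multiplicity $2$, Lebesgue number $r/4$) and intersect with $X$. Restriction preserves diameter and multiplicity, while the Lebesgue number in the subspace can only increase.

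The main content is the lower bound $\cdim(X) > 0$, which I would prove by contradiction. Suppose $\sigma > 0$ is such that for arbitrarily small $r$, $X$ admits a cover of multiplicity $1$ (i.e.\ a clopen partition) of diameter $\leq r$ and Lebesgue number $\sigma r$; this is equivalent to every pair of distinct pieces being at distance $\geq \sigma r$. Let $V$ be the piece containing $0$. Its complement $X \setminus V$ is closed in $X$ and avoids $0$, so, since every infinite subset of $\{1/n : n \in \mathbb{N}\}$ accumulates at $0$, the complement must be finite. Hence $V = \{0\} \cup \{1/n : n \in T\}$ with $T \subseteq \mathbb{N}$ cofinite, and the diameter bound forces $T \subseteq \{n \geq \lceil 1/r \rceil\}$. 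Setting $m = \min T \geq \lceil 1/r \rceil$, the point $1/(m-1)$ lies in a different piece $V'$, and
\[
d(V, V') \leq \frac{1}{m-1} - \frac{1}{m} = \frac{1}{m(m-1)} \leq \frac{1}{\lceil 1/r \rceil(\lceil 1/r \rceil - 1)} = O(r^{2}).
\]
The requirement $\sigma r \leq O(r^{2})$ forces $\sigma = O(r)$, contradicting that $\sigma$ is a fixed positive constant. Hence $\cdim(X) > 0$, and integrality of capacity dimension gives $\cdim(X) = 1$.

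The main obstacle is the bookkeeping in the lower bound, specifically pinning down that any clopen neighbourhood of $0$ in $X$ omits only finitely many $1/n$ and so must contain every $1/n$ with $n$ at least roughly $1/r$. This is exactly what forces the critical $O(r^{2})$ nearest-neighbour gap at scale $r$ and produces the contradiction with the linear-in-$r$ Lebesgue requirement. Combining the two bounds yields $\holdim(X) = 0 < 1 = \cdim(X)$, as required.
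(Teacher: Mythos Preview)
Your proposal is correct and uses the same example $X=\{0\}\cup\{1/n\}$ and the same underlying mechanism as the paper: the gap $1/(m-1)-1/m\approx r^{2}$ is quadratically small at scale $r\approx 1/m$, which is incompatible with a Lebesgue number proportional to $r$ in a multiplicity-$1$ cover. The only cosmetic difference is the direction of the contradiction---the paper chains consecutive points into a single piece via the Lebesgue number and contradicts the mesh bound, whereas you use the mesh bound (plus clopenness of the piece containing $0$) to locate the first excluded point and contradict the Lebesgue number; both are equally valid.
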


The idea is that it only takes a countable number of points spaced out poorly to force capacity dimension to increase beyond $0$, but a countable number of points has Hausdorff dimension $0$. This example is explicitly described in Section~\ref{sectionholdimnotalwayscap}.

The reader should note that every example we've given has had integer H\"older dimension. For our compact, locally self-similar examples, this follows from Corollary~\ref{cormain}. However, even in our non-self-similar examples we still have an integer H\"older dimension. This leads one to pose the question: 
\begin{q}
	Can H\"older dimension take a non-integer value? 
\end{q}

Another possible line of further investigation is to try to improve upon our choice of $\ell^{2}$ by embedding, instead, into some finite dimension space, $\mathbb{R}^{N}$, for sufficiently large $N$.

\subsection*{Acknowledgements}
The author is grateful to John Mackay for invaluable guidance.

\section{Capacity dimension and metric spaces}\label{definitions}

A major component of the proof of Theorem~\ref{thmmain} is approximating $X$ by sequentially finer covers. However, not any old haphazard covers will do; we will need them to have quite a bit of structure. In this section, we delve into some background on how to cover metric spaces with open sets, and how concepts like dimension and doubling allow us to take covers with more structure. In particular, we define topological dimension, capacity dimension, and doubling, and provide a lemma which combines capacity dimension and doubling to prove the existence of covers with especially useful properties. We start with the notion of topological dimension. The following definitions are from~\cite[Section 50]{munkrestopology}.

Let $X$ be a topological space.

\begin{defn}
	A collection $\mathcal{A}$ of subsets of the space $X$ is said to have \redit{multiplicity} $m$ if some point $X$ lies in $m$ elements of $\mathcal{A}$, and no point of $X$ lies in more than $m$ elements of $\mathcal{A}$.
\end{defn}


\begin{defn}
	Given a collection $\mathcal{A}$, a collection $\mathcal{B}$ is said to \redit{refine} $\mathcal{A}$, or to be a \redit{refinement} for $\mathcal{A}$, if for each element $B$ of $\mathcal{B}$, there is an element $A$ of $\mathcal{A}$ such that $B\subset A$.
\end{defn}

\begin{defn}
	A collection $\mathcal{A}$ of subsets of a space $X$ is said to \redit{cover} $X$, or to be a \redit{covering} of $X$, if the union of the elements of $\mathcal{A}$ is equal to $X$. It is called a \redit{open covering} of $X$ if its elements are open subsets of $X$.
\end{defn}

These definitions culminate in the following topological notion of dimension, also sometimes also referred to as ``covering dimension''.

\begin{defn}\label{defntopdim}
	The \redit{topological dimension} of $X$ is defined to be the smallest integer $m$ such that for every open covering $\mathcal{A}$ of $X$, there exists an open covering $\mathcal{B}$ that refines $\mathcal{A}$ and has multiplicity at most $m+1$.
\end{defn}

We need better control on the size of elements in these covers so we look to a related notion called capacity dimension, which can be found in~\cite{BuyaloLebadeva}, that strengthens topological dimension in metric spaces by imposing metric constraints.

Let $X$ be a metric space.

\begin{defn}
	The \redit{mesh} of a covering $\mathcal{U}$ is the supremum of the diameters of elements of $\mathcal{U}$.
	\[
	\text{mesh}(\mathcal{U}) \coloneqq \sup\{\diam(U)\mid U\in\mathcal{U}\}.
	\]
\end{defn}

\begin{defn}
	A covering $\mathcal{U}$ is said to be \redit{coloured} if it is the union of $m\geq1$ disjoint families, $\mathcal{U} = \cup_{a\in A} \mathcal{U}^{a}$, $|A|=m$, with the property that, for any $a\in A$, if $U,V\in\mathcal{U}^{a}$ are distinct, then $U\cap V = \emptyset$.  In this case we also say that $\mathcal{U}$ is $m$-coloured.
	
\end{defn}

Note that an $m$-coloured covering $\mathcal{U}$ has multiplicity at most $m$. Indeed, if some members of $\mathcal{U}$ have non-empty intersection, then they must each lie in different families of which there are $m$, and, therefore, at most $m$ can intersect non-trivially.

\begin{defn}
	Let $\mathcal{U}$ be a family of open subsets in a metric space $X$ which cover $A\subset X$. Given $x\in A$, we let
	\[
	\mathcal{L}(\mathcal{U},x) \coloneqq \sup\{d(x,X\setminus U)\mid U\in\mathcal{U}\}
	\]
	be the \redit{Lebesgue number of $\mathcal{U}$ at $x$}, $\mathcal{L}(\mathcal{U}) = \inf_{x\in A} \mathcal{L}(\mathcal{U},x)$ be the \redit{Lebesgue number of the covering $\mathcal{U}$ of $A$}.
\end{defn}
We give the definition of Lebesgue number from~\cite{BuyaloLebadeva} as this is the definition Buyalo and Lebadeva use when giving Definition~\ref{capdimdefn}.

This definition is a little opaque, but, luckily, the reader need only concern themselves with the following key fact about Lebesgue number. 
\begin{lem}
	If $\mathcal{U}$ is a finite cover for $A\subset X$, then, for every $x\in A$, the open ball $B(x,r)$ in $X$ of radius $r\leq \mathcal{L}(\mathcal{U})$ centred at $x$ is contained in some element of the cover $\mathcal{U}$.
\end{lem}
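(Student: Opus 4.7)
The plan is essentially to unwind the definition of the Lebesgue number and exploit the finiteness of $\mathcal{U}$. Fix $x\in A$ and $r\leq \mathcal{L}(\mathcal{U})$. Since $\mathcal{U}$ is finite, the supremum defining $\mathcal{L}(\mathcal{U},x)$ is taken over finitely many values, so it is attained by some $U_{x}\in\mathcal{U}$; that is, $\mathcal{L}(\mathcal{U},x) = d(x,X\setminus U_{x})$. By the definition of $\mathcal{L}(\mathcal{U})$ as an infimum, $d(x,X\setminus U_{x}) = \mathcal{L}(\mathcal{U},x) \geq \mathcal{L}(\mathcal{U}) \geq r$.

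I would then deduce the containment $B(x,r)\subset U_{x}$ directly from this inequality: if $y\in X\setminus U_{x}$ then $d(x,y)\geq d(x,X\setminus U_{x})\geq r$, so $y\notin B(x,r)$. Equivalently, $B(x,r)\cap (X\setminus U_{x}) = \emptyset$, i.e.\ $B(x,r)\subset U_{x}$, which is what we wanted.

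The only subtlety is the possibility that some $U\in\mathcal{U}$ equals $X$, in which case $X\setminus U$ is empty and $d(x,X\setminus U)$ is, strictly speaking, undefined. This is harmless: with the convention $d(x,\emptyset) = +\infty$ the element $U_{x}=X$ attains the supremum and the argument above goes through, while without that convention one may simply discard such a $U$ from consideration — if it turns out to be the only candidate, then $B(x,r)\subset X = U$ trivially, and otherwise the argument applies to the remaining elements of $\mathcal{U}$.

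There is no real obstacle here; the lemma is a direct consequence of definitions, and the only thing to be careful about is the trivial edge case above. The role of finiteness of $\mathcal{U}$ is purely to upgrade the supremum in the definition of $\mathcal{L}(\mathcal{U},x)$ into a maximum, ensuring there is a single $U_{x}$ containing $B(x,r)$.
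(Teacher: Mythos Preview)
Your proof is correct and is the standard unwinding-of-definitions argument. The paper itself states this lemma without proof, so there is nothing to compare against; your write-up would serve perfectly well as the omitted justification.
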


\begin{defn}\label{capdimdefn}
	The \redit{capacity dimension} of a metric space $X$ is the minimal integer $n\geq 0$ with the following property: There is a constant $\sigma^{\prime}\in (0,1)$ such that for every sufficiently small $\delta>0$ there exists an $(n+1)$-coloured open covering $\mathcal{U}^{\prime}$ of $X$ with mesh$(\mathcal{U}^{\prime})\leq \delta$ and $\mathcal{L}(\mathcal{U}^{\prime})\geq \sigma^{\prime} \delta$. We say that $X$ has capacity dimension $n$ with coefficient $\sigma^{\prime}$.
\end{defn}

Buyalo and Lebadeva then proceed to give some conditions for which one can assume that capacity dimension and topological dimension are equal.

\begin{defn}\label{defnlocselfsim}
	A metric space $(X,d)$ is \redit{locally self-similar} if there exists $\lambda\geq 1$ such that for every sufficiently large $R>1$ and every $A\subseteq X$ with $\diam(A)\leq \Lambda_{0}/R$, where $\Lambda_{0} = \min\{1, \diam(X)/\lambda\}$, there is an embedding 
	\[
	f\colon A\rightarrow X
	\]
	such that, for all $z_{1},z_{2}\in A$, 
	\[
	Rd(z_{1},z_{2})/\lambda\leq d(f(z_{1}),f(z_{2}))\leq \lambda Rd(z_{1},z_{2}).
	\]
	In other words, $f$ is a $\lambda$-bi-Lipschitz homeomorphism from $(A,Rd)$, the subspace $A$ with a rescaled metric, to its image in $(X,d)$.
\end{defn}

The following theorem is corollary 1.2 in~\cite{BuyaloLebadeva}.

\begin{thm}\label{BLcor}
	The capacity dimension of every compact, locally self-similar metric space $X$ is finite and coincides with its topological dimension.
\end{thm}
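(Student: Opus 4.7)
The plan is to show both $\dim(X) \leq \text{capdim}(X)$ and its reverse. The first inequality is immediate: any $(n+1)$-coloured cover has multiplicity at most $n+1$, so any space of capacity dimension $n$ has topological dimension at most $n$. The heart of the matter is to show that if $n = \dim(X)$, then at every sufficiently small scale $\delta>0$, $X$ admits an $(n+1)$-coloured open cover with mesh at most $\delta$ and Lebesgue number at least $\sigma^{\prime}\delta$ for some fixed $\sigma^{\prime}$. Finiteness of $n$ will come out of the construction, since local self-similarity plus compactness quickly yields that $X$ is doubling (pieces of small diameter inflate, up to a bounded bi-Lipschitz factor, to pieces of bounded diameter in a compact space, so covers of the latter by $s$-balls pull back to covers of the former by $s/R$-balls), and doubling compact spaces are finite-dimensional.

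First I would construct a ``base'' cover at some comparatively large scale $\epsilon_{0}$. Using compactness and the definition of topological dimension, extract a finite open cover $\mathcal{U}_{0}$ of $X$ with mesh at most $\epsilon_{0}$, multiplicity at most $n+1$, and positive Lebesgue number. The first nontrivial move is upgrading multiplicity to a genuine $(n+1)$-colouring, i.e., partitioning $\mathcal{U}_{0}$ into $n+1$ pairwise-disjoint subfamilies; this is not possible from multiplicity alone, but a suitable refinement (obtained via a partition-of-unity or Voronoi-type construction keyed to a well-separated subset of $X$) reshapes the nerve into a form in which $(n+1)$-colouring is feasible. Call the result $\mathcal{V}_{0}$, with mesh $\leq \epsilon_{0}$ and Lebesgue number $\geq \sigma^{\prime}\epsilon_{0}$.

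Next I would propagate $\mathcal{V}_{0}$ to every small scale via local self-similarity. Given a target scale $\delta$, pick $R$ large with $\epsilon_{0}/R \sim \delta$ and cover $X$ by ``mesoscale'' pieces $A_{j}$ each of diameter at most $\Lambda_{0}/R$. Each $A_{j}$ admits an embedding $f_{j}\colon A_{j} \to X$ which is $\lambda$-bi-Lipschitz after rescaling by $R$; pulling back $\mathcal{V}_{0}$ through $f_{j}$ produces an $(n+1)$-coloured cover of $A_{j}$ at the right scale, with Lebesgue-number-to-mesh ratio degraded only by a factor of $\lambda^{2}$. The main obstacle is then patching these local covers on the different $A_{j}$ into a single $(n+1)$-coloured cover of $X$: the local colourings need not match across overlaps, so one must either combinatorially relabel along an auxiliary coarse colouring of the index set $\{j\}$, or apply a fresh nerve-colouring argument to the union while preserving a uniform Lebesgue-number-to-mesh ratio. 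Once this patching is carried out with constants depending only on $\lambda$ and $\sigma^{\prime}$, the resulting family witnesses $\text{capdim}(X) \leq n$, and combined with the reverse inequality and the finiteness established above this gives the theorem.
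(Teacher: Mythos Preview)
The paper does not prove Theorem~\ref{BLcor}; it is quoted verbatim as Corollary~1.2 of Buyalo--Lebedeva and used as a black box. So there is no ``paper's own proof'' to compare against, and your proposal is in effect an attempt to reconstruct the Buyalo--Lebedeva argument.

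Your outline is broadly the right shape: the inequality $\dim(X)\leq\text{capdim}(X)$ is immediate, finiteness follows from doubling, and the substantive direction is indeed handled by taking one good cover at a fixed macroscopic scale and transporting it to every small scale through the self-similarity maps $f$. The two difficulties you flag are exactly the right ones.

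That said, your patching step contains a genuine gap as written. If you cover $X$ by mesoscale pieces $A_{j}$ and pull back an $(n+1)$-coloured cover through each $f_{j}$, the union of these pulled-back covers is \emph{not} $(n+1)$-coloured: colours assigned in different $A_{j}$'s bear no relation to one another. Your first suggested fix, colouring the index set $\{j\}$ with some $K$ colours and relabelling, yields a $K(n+1)$-coloured cover, not an $(n+1)$-coloured one, so it overshoots the target. Your second suggestion, ``apply a fresh nerve-colouring argument to the union,'' is the real content of the theorem and cannot be invoked without saying what that argument is; in particular one has to control the multiplicity of the union (sets from overlapping $A_{j}$'s can pile up) and then pass from bounded multiplicity back to an $(n+1)$-colouring while keeping the Lebesgue-number-to-mesh ratio uniform. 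Buyalo--Lebedeva resolve this with a careful construction (working with separated nets and coloured covers built from them, rather than pulling back an arbitrary base cover), and that machinery is what your sketch is missing. The upgrading of a multiplicity-$(n+1)$ cover to an $(n+1)$-coloured one at the base scale is also not free; it requires either the equivalence of the two definitions of capacity dimension or a direct construction, neither of which you supply.
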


We will, in fact, not need the full strength of local self-similarity for our main theorem, but it is helpful to see here that a space having finite capacity dimension is not a particularly unreasonable assumption.

Finally, we need control on how many elements are in these covers. To this end, we introduce the concept of a doubling metric space which can be found in~\cite[Chapter 10]{heinonen2001lectures}.

\begin{defn}\label{doublingdefn}
	A metric space is \redit{doubling} if there exists a constant $N<\infty$ such that, for any $x\in X$ and $r>0$, any ball $\overbar{B(x,r)}$ can be covered by at most $N$ balls of radius $r/2$. In particular, we say $X$ is $N$-doubling.
\end{defn}

\begin{lem}\label{final sizecontrolledcovers}
	Suppose $X$ is a finite diameter, $N$-doubling metric space of capacity dimension $n$ with coefficient $\sigma^{\prime}$, then there is a constant $\sigma = \sigma^{\prime}/2\in (0,1)$ such that, for every sufficiently small $\delta>0$, there exists an $(n+1)$-coloured open covering $\mathcal{U}$ of $X$ with mesh$(\mathcal{U})\leq \delta$, $\mathcal{L}(\mathcal{U})\geq \sigma \delta$, and 
	\[
	|\mathcal{U}|\leq N^{\log_{2}(2\diam(X)/\sigma\delta)}.
	\]
\end{lem}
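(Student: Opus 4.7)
The plan is to start with the $(n+1)$-coloured cover provided by the capacity dimension hypothesis, thin it out so that every remaining set contains a ball of a uniform radius, and then bound the size of the thinned cover using the doubling property.

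First, by capacity dimension with coefficient $\sigma'$, for all sufficiently small $\delta>0$ there is an $(n+1)$-coloured open cover $\mathcal{U}'$ of $X$ with $\text{mesh}(\mathcal{U}')\leq\delta$ and $\mathcal{L}(\mathcal{U}')\geq\sigma'\delta$. I set $\sigma=\sigma'/2$ and define $\mathcal{U}\subseteq\mathcal{U}'$ to consist of those $U\in\mathcal{U}'$ that contain some open ball of radius $\sigma\delta$. The colouring and the mesh bound pass to $\mathcal{U}$ trivially. For any $x\in X$, the Lebesgue number condition on $\mathcal{U}'$ furnishes some $U\in\mathcal{U}'$ with $B(x,\sigma'\delta)\subseteq U$; since $\sigma'\delta=2\sigma\delta$, this $U$ already contains $B(x,\sigma\delta)$, so it lies in $\mathcal{U}$ and simultaneously certifies $x\in\bigcup\mathcal{U}$ and $\mathcal{L}(\mathcal{U},x)\geq\sigma\delta$.

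For the size bound, for each $U\in\mathcal{U}$ I pick a centre $y_U\in U$ with $B(y_U,\sigma\delta)\subseteq U$. Within any single one of the $n+1$ colour classes, the members of $\mathcal{U}$ are pairwise disjoint, so the inscribed balls $B(y_U,\sigma\delta)$ are pairwise disjoint, and the centres $\{y_U\}$ within that class form a $2\sigma\delta$-separated subset of $X$. Iterating the $N$-doubling property starting from $X\subseteq\overline{B(x_0,\diam(X))}$ for any basepoint $x_0\in X$ gives, after $k$ halvings, a covering of $X$ by at most $N^{k}$ balls of radius $\diam(X)/2^{k}$; taking $k=\lceil\log_2(\diam(X)/\sigma\delta)\rceil\leq\log_2(2\diam(X)/\sigma\delta)$ produces a cover of $X$ by at most $N^{\log_2(2\diam(X)/\sigma\delta)}$ balls of radius $\sigma\delta$. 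Since a $2\sigma\delta$-separated set meets each such ball in at most one point, each colour class contributes at most $N^{\log_2(2\diam(X)/\sigma\delta)}$ sets to $\mathcal{U}$.

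The main (minor) obstacle is obtaining the stated bound on $|\mathcal{U}|$ without an extra $(n+1)$ factor from summing over colour classes. I expect this to be absorbed either by noting $n+1\leq N$, which one can always arrange by enlarging the doubling constant harmlessly, or by a single unified counting step that applies the doubling argument directly to the family of inscribed balls (whose multiplicity is at most $n+1$, since an inscribed ball $B(y_U,\sigma\delta)$ lies inside $U$ and the cover has multiplicity $n+1$), recovering the same exponential bound after a mild adjustment. The refinement step is a short bookkeeping manoeuvre, and the covering and Lebesgue conditions are immediate from the definition of Lebesgue number, so the counting via doubling is the conceptual heart of the argument.
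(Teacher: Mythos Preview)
Your approach differs from the paper's and contains a genuine error. You claim that because the inscribed balls $B(y_U,\sigma\delta)$ within a colour class are pairwise disjoint, their centres are $2\sigma\delta$-separated. In a general metric space this is false: disjointness of $B(y_U,r)$ and $B(y_V,r)$ only gives $d(y_U,y_V)\ge r$ (since $y_U\notin B(y_V,r)$), not $d(y_U,y_V)\ge 2r$; the stronger conclusion would require a midpoint between $y_U$ and $y_V$, which $X$ need not contain. With only $\sigma\delta$-separation, a covering ball of radius $\sigma\delta$ can contain several centres, so your packing count breaks. Repairing this (cover by balls of radius $\sigma\delta/2$ instead) costs an extra factor of $N$, on top of the $(n+1)$ factor from summing over colour classes that you already flagged; neither of your suggested absorptions actually produces the bound stated in the lemma.

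The paper sidesteps both issues by reversing the order of the two ingredients. It first uses doubling to obtain a cover $\mathcal{D}=\{B(x_j,\sigma'\delta/2)\}_j$ of $X$ with $|\mathcal{D}|\le N^{k}$ for the appropriate $k$, and then for each centre $x_j$ uses the Lebesgue number of $\mathcal{U}'$ to select some $U_j\in\mathcal{U}'$ with $B(x_j,\sigma'\delta)\subseteq U_j$. The subfamily $\mathcal{U}=\{U_j\}$ then has at most $|\mathcal{D}|\le N^{k}$ elements by construction, with no separation argument and no colour-class sum, and a one-line triangle-inequality check gives $\mathcal{L}(\mathcal{U})\ge\sigma'\delta/2=\sigma\delta$. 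In short, the cardinality bound is inherited directly from the doubling cover rather than obtained by packing inscribed balls.
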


\begin{proof}
	Given that $X$ is $N$-doubling, we can cover $X$ by at most $N$ balls of radius $\diam(X)/2$. Each of these balls can be covered by at most $N$ balls of radius $\diam(X)/4$, so $X$ can be covered by at most $N^{2}$ balls of radius $\diam(X)/4$. Continuing inductively, we see that $X$ can be covered by at most $N^{k}$ balls of radius $\diam(X)/2^{k}$ for any $k\in \mathbb{N}$. 
	Let $\delta>0$ be sufficiently small as in Definition~\ref{capdimdefn} of capacity dimension, and fix $k\in \mathbb{N}$ to be the unique positive integer such that
	\[
	\frac{\diam(X)}{2^{k}}< \frac{\sigma^{\prime}\delta}{2} \leq\frac{\diam(X)}{2^{k-1}},
	\]
	equivalently,
	\[
	\frac{\sigma^{\prime}\delta}{4}\leq\frac{\diam(X)}{2^{k}}< \frac{\sigma^{\prime}\delta}{2}.
	\]
	Rearranging, we see $k$ satisfies
	\[
	\log_{2}\left(\frac{2\diam(X)}{\sigma^{\prime}\delta}\right)< k\leq \log_{2}\left(\frac{4\diam(X)}{\sigma^{\prime}\delta}\right).
	\]
	Now, let $\mathcal{U}^{\prime}$ be a cover for $X$ of mesh at most $\delta$ provided by the definition of capacity dimension. In particular, $\mathcal{L}(\mathcal{U}^{\prime})\geq \sigma^{\prime}\delta$, so for any $x\in X$, $B(x,\sigma^{\prime}\delta)\subset U$ for some $U\in\mathcal{U}^{\prime}$. Consider also a cover $\mathcal{D}$ of $X$ by balls of radius $\sigma^{\prime}\delta/2$. By $N$-doubling and the above, we can assume, 
	\[
	\left|\mathcal{D}\right|\leq N^{k} \leq N^{\log_{2}(4\diam(X)/\sigma^{\prime}\delta)}.
	\]
	Say $\mathcal{D} = \{B(x_{j},\sigma^{\prime}\delta/2)\}_{j=1}^{N^{k}}$, then for each $j$, using the Lebesgue number of $\mathcal{U}^{\prime}$ we can pick a $U_{j}\in\mathcal{U}^{\prime}$ such that $B(x_{j},\sigma^{\prime}\delta/2)\subseteq B(x_{j},\sigma^{\prime}\delta)\subseteq U_{j}$. Now, note that $\mathcal{U} \coloneqq\{U_{j}\}_{j=1}^{N^{k}}$ is also an open covering of mesh at most $\delta$, $(n+1)$-coloured, but now with $\left|\mathcal{U}\right|$ at most $N^{k}$. We're almost there, but we might have decreased the Lebesgue number by removing elements from $\mathcal{U}$. To fix this, note that $B(x_{j},\sigma^{\prime}\delta/2)$ is still an open cover for $X$, so, for any $x\in X$ there exists a $j$ such that $x\in B(x_{j},\sigma^{\prime}\delta/2)$. Consequently, $B(x,\sigma^{\prime}\delta/2)\subseteq  B(x_{j},\sigma^{\prime}\delta)$ by the triangle inequality, and therefore $B(x,\sigma^{\prime}\delta/2)\subseteq U_{j}$. Hence, $\mathcal{L}(\mathcal{U})\geq \sigma^{\prime}\delta/2$, and taking $\sigma\coloneqq \sigma^{\prime}/2$, we get the desired result that $\mathcal{U}$ has capacity dimension properties plus
	\[
	\left|\mathcal{U}\right|\leq N^{k}\leq N^{\log_{2}(2\diam(X)/\sigma\delta)}.\qedhere
	\]
\end{proof}

\section{Construction of the approximating maps}\label{constructionsec}

We now begin the proof of Theorem~\ref{thmmain}. We start in Subsection~\ref{subsectionapproximatingXbynicecoversatcontrolledscales} by defining a sequence of scales, approximating $X$ by nice covers at each of these scales, and then building functions, in Subsection~\ref{subsectionconstructingmaps} which translate these approximating covers over into approximations of $X$ in $\ell^{2}$. We will then proceed to, in Section~\ref{sectionpropertiesofthemaps}, show some useful properties of these approximating functions. 

\subsection{Approximating $X$ by nice covers at controlled scales}\label{subsectionapproximatingXbynicecoversatcontrolledscales}

Assume $X$ is a compact, doubling metric space which has capacity dimension $n$. For now, we prove Theorem~\ref{thmmain} assuming $X$ has diameter equal to $1$.

Let $\sigma$ be as in Lemma~\ref{final sizecontrolledcovers}, and let $L=L(n,\sigma)$ be defined by
\begin{equation}\label{defnL}
L\coloneqq\frac{128(n+1)^{2}}{\sigma^{2}}.
\end{equation}
Later, we will show that certain maps are locally Lipschitz, and $L$ will appear in the corresponding Lipschitz constants.

Let $q>n$. We will construct a space which is H\"older equivalent to $X$ and has Hausdorff dimension at most $q$.

\begin{note}\label{Nbig}
	The `$N$' in $N$-doubling, see Definition~\ref{doublingdefn}, is really only an upper bound; if $N^{\prime}\geq N$ and $X$ is $N$-doubling, then $X$ is $N^{\prime}$-doubling too. Similarly, if $\mathcal{U}$ is a cover as in Lemma~\ref{final sizecontrolledcovers}, then for every $N^{\prime}\geq N$
	\[
	|\mathcal{U}| \leq{(N^{\prime})}^{\log_{2}(2\diam(X)/\sigma\delta)},
	\]
	because the exponent is positive considering that $\delta\leq \diam(X)$ and $0<\sigma<1$.
	
	Throughout, it will be convenient to assume $N$ is much larger than other constants. Indeed, as $n,q,\sigma$ are all such that replacing $N$ with a larger value has no effect on them, we can assume $N$ is arbitrarily large with respect to $n,q$, and $\sigma$. The exact value of $N=N(n,q,\sigma)$ is determined by Lemma~\ref{assumptionsonN} later.
\end{note}

Let $\delta_{0} = 1$, $\epsilon_{0} = 1$, and inductively define $\epsilon_{i+1} = \epsilon_{i+1}(n,q,\sigma,N,\delta_{i})$ by 
\begin{equation}\label{epsiloni+1}
\epsilon_{i+1} \coloneqq  \frac{1}{8}\left((8\sqrt{n})^{n}N^{(n+2)\log_{2}(2/\sigma\delta_{i})}\right)^{\frac{-1}{q-n}} 
\end{equation}
and $\delta_{i+1} = \delta_{i+1}(L,\delta_{i},\epsilon_{i},\epsilon_{i+1})$ by
\begin{equation}\label{deltai+1}
\delta_{i+1} \coloneqq \frac{1}{L}\frac{\delta_{i}}{\epsilon_{i}}\epsilon_{i+1},
\end{equation}
for $i\geq 0$.

In Subsection~\ref{subsectionconstructingmaps} we will construct a sequence of maps from $X$ to $\ell^{2}$ dependent on these sequences $(\epsilon_{j})_{j\in\mathbb{N}}$ and $(\delta_{j})_{j\in\mathbb{N}}$.
One should think of $(\epsilon_{j})_{j\in\mathbb{N}}$ and $(\delta_{j})_{j\in\mathbb{N}}$ as partitioning distances in $\ell^{2}$ and $X$, respectively, into different `scales'. The maps will approximate $X$ at scale $\delta_{i}$ by simplicies at scale $\epsilon_{i}$ in $\ell^{2}$. Our exact choices of these $\epsilon_{i}$ and $\delta_{i}$ are made to give us good control on how the maps distort distance. 
For now, we note some properties of these two sequences as they will be integral to the overall proof.

\begin{lem}\label{final epsilondeltarelation}
	The sequences $(\delta_{j})_{j\in\mathbb{N}}$ and $(\epsilon_{j})_{j\in\mathbb{N}}$ satisfy
	\[
	\delta_{i+1} = \left(\frac{1}{L}\right)^{i+1} \epsilon_{i+1},\text{ for all }i\geq 0.
	\]
\end{lem}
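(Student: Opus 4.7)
The plan is to prove this by straightforward induction on $i$, using only the recursive definition~(\ref{deltai+1}) together with the initial values $\delta_{0}=\epsilon_{0}=1$. The statement is essentially that the ratio $\delta_{i}/\epsilon_{i}$ telescopes into a pure power of $1/L$, because the recursion for $\delta_{i+1}$ multiplies $\epsilon_{i+1}$ by exactly that ratio scaled by $1/L$.

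First I would check the base case $i=0$ by direct substitution: from~(\ref{deltai+1}),
\[
\delta_{1} \;=\; \frac{1}{L}\,\frac{\delta_{0}}{\epsilon_{0}}\,\epsilon_{1} \;=\; \frac{1}{L}\cdot\frac{1}{1}\cdot\epsilon_{1} \;=\; \left(\frac{1}{L}\right)^{1}\epsilon_{1},
\]
which is the desired formula for $i=0$.

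Next I would handle the inductive step. Assume as inductive hypothesis that $\delta_{i} = (1/L)^{i}\,\epsilon_{i}$ for some $i\geq 0$; equivalently, $\delta_{i}/\epsilon_{i} = (1/L)^{i}$. Plugging this into the recursion~(\ref{deltai+1}) gives
\[
\delta_{i+1} \;=\; \frac{1}{L}\,\frac{\delta_{i}}{\epsilon_{i}}\,\epsilon_{i+1} \;=\; \frac{1}{L}\left(\frac{1}{L}\right)^{i}\epsilon_{i+1} \;=\; \left(\frac{1}{L}\right)^{i+1}\epsilon_{i+1},
\]
which closes the induction.

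There is no real obstacle here: the statement is a bookkeeping lemma designed exactly so that the recursively defined $\delta_{i}$ retain a clean geometric relationship to $\epsilon_{i}$ (this is presumably what will drive the H\"older exponents later, where powers of $L$ become powers of the scale ratio). The only subtlety is noting that $\epsilon_{i}>0$ for all $i$, so the division $\delta_{i}/\epsilon_{i}$ in~(\ref{deltai+1}) is well-defined; this is immediate from~(\ref{epsiloni+1}) since each $\epsilon_{i+1}$ is a strictly positive power of a positive quantity, with $\epsilon_{0}=1>0$ as the base.
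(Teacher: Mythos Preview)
Your proof is correct and takes essentially the same approach as the paper: the paper simply says to inductively apply the recurrence~(\ref{deltai+1}), observe the telescoping product, and use $\epsilon_{0}=\delta_{0}=1$, which is exactly what your explicit induction carries out.
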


\begin{proof}
	Inductively apply the recurrence relation and note you have a telescoping product. Finally, recall that $\epsilon_{0} = \delta_{0} =1$.
\end{proof}

\begin{lem}\label{epsilon decreasing}
	The sequence $(\epsilon_{j})_{j\in\mathbb{N}}$ satisfies 
	\[
	\frac{\epsilon_{i+1}}{\epsilon_{i}}\leq \frac{1}{L},\text{ for all }i\geq 0.
	\]
\end{lem}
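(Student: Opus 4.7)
The plan is to prove the inequality by induction on $i$, treating the base case separately and then using Lemma~\ref{final epsilondeltarelation} to reduce the inductive step to a direct computation with the recurrence.

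For the base case $i=0$, I would need to show $\epsilon_{1} \leq 1/L$, since $\epsilon_{0}=1$. Unpacking the definition~(\ref{epsiloni+1}) with $\delta_{0}=1$ gives
\[
\epsilon_{1} = \tfrac{1}{8}\bigl((8\sqrt{n})^{n}\bigr)^{-1/(q-n)} N^{-(n+2)\log_{2}(2/\sigma)/(q-n)}.
\]
Since $L=128(n+1)^{2}/\sigma^{2}$ depends only on $n$ and $\sigma$, and since $\log_{2}(2/\sigma)>0$ (because $\sigma<1$), the right-hand side tends to $0$ as $N\to\infty$. Thus taking $N$ sufficiently large in terms of $n,q,\sigma$, as permitted by Note~\ref{Nbig} and later codified in Lemma~\ref{assumptionsonN}, forces $\epsilon_{1}\leq 1/L$.

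For the inductive step, fix $i\geq 1$ and assume $\epsilon_{k+1}/\epsilon_{k}\leq 1/L$ for all $0\leq k<i$. Taking the ratio of the defining expressions for $\epsilon_{i+1}$ and $\epsilon_{i}$, the constant prefactors $\tfrac{1}{8}$ and $(8\sqrt{n})^{-n/(q-n)}$ cancel, leaving
\[
\frac{\epsilon_{i+1}}{\epsilon_{i}} \;=\; N^{\frac{n+2}{q-n}\bigl(\log_{2}(2/\sigma\delta_{i-1})-\log_{2}(2/\sigma\delta_{i})\bigr)} \;=\; \left(\frac{\delta_{i}}{\delta_{i-1}}\right)^{\!(n+2)\log_{2}(N)/(q-n)}.
\]
Now I would invoke Lemma~\ref{final epsilondeltarelation}. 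For $i\geq 2$ it gives $\delta_{i}/\delta_{i-1}=(1/L)(\epsilon_{i}/\epsilon_{i-1})$, and the inductive hypothesis yields $\delta_{i}/\delta_{i-1}\leq 1/L^{2}$. For $i=1$, the same bound follows from $\delta_{0}=\epsilon_{0}=1$, $\delta_{1}=\epsilon_{1}/L$, and the base case $\epsilon_{1}\leq 1/L$.

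Substituting back,
\[
\frac{\epsilon_{i+1}}{\epsilon_{i}} \;\leq\; L^{-2(n+2)\log_{2}(N)/(q-n)},
\]
which is at most $1/L$ provided $2(n+2)\log_{2}(N)/(q-n)\geq 1$, i.e.\ $N\geq 2^{(q-n)/(2(n+2))}$. This is another constraint on $N$ depending only on $n$ and $q$, and is again absorbed into our standing assumption that $N$ is large. Induction then closes.

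The argument is essentially bookkeeping: the only real obstacle is coherently collecting the quantitative constraints on $N$ (one from the base case, one from the inductive step) into the single "$N$ large" assumption that Lemma~\ref{assumptionsonN} will ultimately record; no conceptual difficulty arises once one observes the convenient cancellation in the ratio $\epsilon_{i+1}/\epsilon_{i}$.
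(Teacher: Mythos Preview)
Your argument is correct. The overall strategy—induction together with absorbing the needed constraints on $N$ into the standing ``$N$ large'' hypothesis—is the same as the paper's, but the bookkeeping is organised differently. The paper substitutes $\delta_{i}=\epsilon_{i}/L^{i}$ from Lemma~\ref{final epsilondeltarelation} directly into~\eqref{epsiloni+1} to obtain the closed form
\[
\epsilon_{i+1}=\frac{1}{8B_{1}N^{B_{2}}N^{iB_{3}}}\,\epsilon_{i}^{Q},\qquad Q=\frac{n+2}{q-n}\log_{2}N,
\]
and then runs induction on the simpler statement $\epsilon_{i}\leq 1$: once $Q\geq 1$ this gives $\epsilon_{i}^{Q}\leq \epsilon_{i}$, and once $B_{1}N^{B_{2}}\geq L$ the prefactor is at most $1/L$. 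Your ratio computation $\epsilon_{i+1}/\epsilon_{i}=(\delta_{i}/\delta_{i-1})^{Q}$ is an equivalent reorganisation and leads to essentially the same constraints on $N$ (your $2Q\geq 1$ is slightly weaker than the paper's $Q\geq 1$, and your base-case constraint $\epsilon_{1}\leq 1/L$ unpacks to $8B_{1}N^{B_{2}}\geq L$, matching the paper's $B_{1}N^{B_{2}}\geq L$ up to a harmless factor). Either route is fine; the paper's version has the minor advantage that the inductive hypothesis $\epsilon_{i}\leq 1$ is self-evidently initialised at $\epsilon_{0}=1$ and avoids the separate treatment of $i=1$ in the ratio $\delta_{i}/\delta_{i-1}$.
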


In particular, throughout, we will also require $(\epsilon_{j})_{j\in\mathbb{N}}$ to be such that
\begin{equation*}
	\frac{\epsilon_{i+1}}{\epsilon_{i}} \leq 1/(8\sqrt{2(n+1)}) \leq 1/2,
\end{equation*}
which is a consequence of Lemma~\ref{epsilon decreasing} by the definition of $L$ given in~\eqref{defnL}.

\begin{proof}
	If $i\geq 1$ then by substituting out $\delta_{i}$ using Lemma~\ref{final epsilondeltarelation} in the expression of $\epsilon_{i+1}$ given in~\eqref{epsiloni+1}, we see that 
	
	\begin{align}
	\epsilon_{i+1} &= \frac{1}{8}\left((8\sqrt{n})^{n}N^{(n+2)\log_{2}\left(2L^{i}/\sigma\epsilon_{i}\right)}\right)^{\frac{-1}{q-n}},\nonumber\\
	&=\frac{1}{8}\left(\frac{1}{B_{1}N^{B_{2}}N^{iB_{3}}}\right)N^{\frac{n+2}{q-n}\log_{2}(\epsilon_{i})},\nonumber\\
	&=\frac{1}{8}\left(\frac{1}{B_{1}N^{B_{2}}N^{iB_{3}}}\right)\epsilon_{i}^{\frac{n+2}{q-n}\log_{2}(N)},\label{explicitepsiloni+1}
	\end{align}
	where 
	\begin{equation}\label{defnB1}
		B_{1}\coloneqq B_{1}(n,q)\coloneqq (8\sqrt{n})^{\frac{n}{q-n}}>0,
	\end{equation}
	\begin{equation}\label{defnB2}
	B_{2} \coloneqq B_{2}(n,q,\sigma)\coloneqq \frac{n+2}{q-n}\log_{2}\left(\frac{2}{\sigma}\right)>0,
	\end{equation}
	\begin{equation}\label{defnB3}
	B_{3} \coloneqq B_{3}(n,q,L) \coloneqq \frac{n+2}{q-n}\log_{2}(L)>0.
	\end{equation}
	The statements that $B_{2}$ and $B_{3}$ are positive come from $0<\sigma<1$ for $B_{2}$, and $L>1$ for $B_{3}$. 
	
	Similarly, we can simplify the expression for $\epsilon_{1}$ from~\eqref{epsiloni+1} without substituting out $\delta_{0}$ but instead recalling that $\delta_{0} = \epsilon_{0} =1$ to get
	\[
	\epsilon_{1}=\frac{1}{8}\left(\frac{1}{B_{1}N^{B_{2}}}\right)\epsilon_{0}^{\frac{n+2}{q-n}\log_{2}(N)},
	\]
	which extends~\eqref{explicitepsiloni+1} to include the $i=0$ case.
	
	As $B_{3}>0$ and $N\geq 1$, we know that $N^{B_{3}}\geq 1$. Furthermore, by Note~\ref{Nbig}, we can assume
	\[
	\frac{n+2}{q-n}\log_{2}(N)\geq 1
	\]
	and
	\[
	B_{1}N^{B_{2}}N^{iB_{3}}\geq B_{1}N^{B_{2}}\geq L,
	\]
	for all $i\geq 0$.
	Now, as~\eqref{explicitepsiloni+1} holds for all $i\geq 0$ and $\epsilon_{0}=1$, we can observe that, inductively, $\epsilon_{i}\leq 1$ and therefore $\epsilon_{i+1}\leq \frac{1}{L} \epsilon_{i}$.
\end{proof}

By combining Lemma~\ref{epsilon decreasing}, Note~\ref{Nbig}, and the definition of $\delta_{i}$ in~\eqref{deltai+1}, we see that we can assume that $\delta_{i}$ is sufficiently small as to apply Lemma~\ref{final sizecontrolledcovers} with $\delta = \delta_{i}$ for all $i\geq1$.

Let $\mathcal{U}_{i+1}$ be a cover of $X$, for all $i\geq0$, as in Lemma~\ref{final sizecontrolledcovers} with mesh$(\mathcal{U}_{i+1})\leq\delta_{i+1}$. In particular, $\mathcal{L}(\mathcal{U}_{i+1})\geq \sigma \delta_{i+1}$, $\mathcal{U}_{i+1}$ has multiplicity at most $n+1$, and 
\begin{equation}\label{Uicover}
\left|\mathcal{U}_{i+1}\right|\leq N^{\log_{2}(\frac{2}{\sigma\delta_{i+1}})}.
\end{equation}
\begin{note}\label{notecoversareapproximations}
These open sets are collections of points which have distances at most $\delta_{i+1}$ from each other, and, by the Lebesgue number property, any collection of points with distances bounded above by $\sigma\delta_{i+1}$ lies in one of these open sets. Therefore, one could interpret such a covering as approximating $X$ by objects of roughly the scale $\delta_{i+1}$.
\end{note}
\begin{assumption}\label{notenonredundantcovers}
	Later, in the proof of Proposition~\ref{lemlocallylipschitz}, we will assume that these covers, $(\mathcal{U}_{j})_{j\in\mathbb{N}_{\geq1}}$, satisfy a kind of `non-redundancy' property: For any $\mathcal{U}_{i+1}$, $i\geq 0$, and any distinct $U,V\in\mathcal{U}_{i+1}$, we have that $U\nsubseteq V$. This assumption is justified as if there exists a pair of distinct elements $U,V\in\mathcal{U}_{i+1}$ such that $U\subseteq V$, then $\mathcal{U}_{i+1}\setminus\{U\}$ is still an open cover of $X$ with mesh at most $\delta_{i+1}$, Lebesgue number at least $\sigma \delta_{i+1}$, multiplicity at most $n+1$, and $|\mathcal{U}_{i+1}|\leq N^{\log_{2}(2/\sigma\delta_{i+1})}$. Therefore, replacing $\mathcal{U}_{i+1}$ with a cover which has had all of the `redundant' elements removed in this way gives us a new cover with all the same desired properties.
\end{assumption}

\subsection{The construction}\label{subsectionconstructingmaps}

Recall that $\ell^{2}$ is the space of square-summable, real-valued sequences with the the norm $\left|(z_{i})_{i\in\mathbb{N}}\right| = \sum_{i\in\mathbb{N}} z_{i}^{2}$. Let $f_{0}\colon X\rightarrow \ell^{2}$ be the constant zero map; $f_{0}(x)= (0,0,0,\dots)$ for all $x\in X$. We now inductively define a sequence of maps $(f_{j})_{j\in\mathbb{N}}$ which will approximate $X$ in $\ell^{2}$ to progressively finer scales.

	Suppose you have a map $f_{i}\colon X \rightarrow \ell^{2}$ such that the image of $f_{i}$ is contained in $\{(z_{1},\dots,z_{m_{i}},0,0,\dots)\mid z_{i}\in\mathbb{R}\}$ for some $m_{i}$. In other words, $f_{i}(X)$ is contained in a particular finite-dimensional linear subspace of $\ell^{2}$. Note that this condition does indeed hold for $f_{0}$ as $f_{0}(X) = \{(0,0,\dots)\}$ is of this form. 

	Order $\mathcal{U}_{i+1}=\{U_{1},U_{2},\dots U_{\left|\mathcal{U}_{i+1}\right|}\}$ and for each $1\leq k\leq \left|\mathcal{U}_{i+1}\right|$, pick $x_{k}\in U_{k}$. 
Then define,
	\begin{equation}\label{pU}
	p_{k}=f_{i}(x_{k}) + \frac{\epsilon_{i+1}}{2} e_{m_{i}+k},
	\end{equation}
	where $e_{j} = (\delta_{k,j})_{k\in\mathbb{N}}$, $\delta_{k,j} =1 $ if $k=j$ and $0$ otherwise.
	In other words, if $f_{i}(x_{k})$ has the form 
	\[
	(z_{1},z_{2},\dots,z_{m_{i}},0,0,\dots),
	\] 
	then $p_{k}$ has the form 
	\[
	(z_{1},\dots,z_{m_{i}},0,\dots,0,\epsilon_{i+1}/2,0,0,\dots),
	\]
	where the $\epsilon_{i+1}/2$ is in the $(m_{i}+k)$-th co-ordinate. 
	
	We will sometimes write $p_{U}\coloneqq p_{k}$ if $U=U_{k}\in\mathcal{U}_{i+1}$.
	
	Now, define $f_{i+1}\colon X \rightarrow \ell^{2}$ as follows
	\begin{equation}\label{fi+1}
	x\mapsto \frac{\sum_{k=1}^{\left|\mathcal{U}_{i+1}\right|} d(x,X\setminus U_{k})p_{k}}{\sum_{k=1}^{\left|\mathcal{U}_{i+1}\right|} d(x,X\setminus U_{k})}.
	\end{equation}
	
	Note, for any $x\in X$, $f_{i+1}(x)$ is a (finite) linear combination of vectors contained in $\{(z_{1},\dots,z_{m_{i+1}},0,0,\dots)\mid z_{i}\in\mathbb{R}\}$, where $m_{i+1} = m_{i}+\left|\mathcal{U}_{i+1}\right|$, and therefore $f_{i+1}(x)$ is also contained in this set. This justifies that we can indefinitely continue this inductive definition of functions to get an infinite sequence $(f_{j})_{j\in\mathbb{N}}$.
	
	\section{Properties of the approximating maps 
	}\label{sectionpropertiesofthemaps}
	In this section, we show that the approximating maps, $f_{i}$, are locally-Lipschitz, that points that are clearly distinct at scale $\delta_{i}$ are clearly distinct in the image by scale $\epsilon_{i}$, which we refer to as a ``separation'' property, and finally that we have good control of the $q$-measure of the images, $f_{i}(X)$.
	\subsection{Locally Lipschitz}
	The functions $f_{i}$ have been chosen so that they don't stretch distances too far. More precisely,
	
	\begin{prop}\label{lemlocallylipschitz}
		For all $i\geq 0$, $f_{i}$ is locally Lipschitz. In particular, if $d(x,y)<\sigma\delta_{i}$, then $d(f_{i}(x),f_{i}(y))\leq \frac{L}{2}\frac{\epsilon_{i}}{\delta_{i}}d(x,y)$, where $L$ is defined in~\eqref{defnL}.
	\end{prop}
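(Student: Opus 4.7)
The plan is to proceed by induction on $i$. The base case $i=0$ is immediate since $f_0$ is constant. For the inductive step, assume $f_i$ satisfies the stated bound on scales below $\sigma\delta_i$, and let $x,y \in X$ with $d(x,y) < \sigma\delta_{i+1}$. By the Lebesgue-number property of $\mathcal{U}_{i+1}$, the ball $B(x,\sigma\delta_{i+1})$ lies in some $U_{k_0} \in \mathcal{U}_{i+1}$, so $y \in U_{k_0}$ as well; this common set is the geometric anchor of the argument.

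Writing $\phi_k(z) \coloneqq d(z, X\setminus U_k)/D(z)$ with $D(z) \coloneqq \sum_k d(z, X\setminus U_k)$, the definition~\eqref{fi+1} makes $f_{i+1}$ a partition-of-unity combination $f_{i+1} = \sum_k \phi_k p_k$. Since $\sum_k (\phi_k(x)-\phi_k(y)) = 0$, I can rewrite $f_{i+1}(x) - f_{i+1}(y) = \sum_k (\phi_k(x) - \phi_k(y))(p_k - p_{k_0})$ and, using $p_k - p_{k_0} = (f_i(x_k) - f_i(x_{k_0})) + \tfrac{\epsilon_{i+1}}{2}(e_{m_i+k} - e_{m_i+k_0})$ from~\eqref{pU}, split into a ``base'' part living near $f_i(X)$ and an orthogonal ``simplex'' part. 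The key quantitative inputs are: (i) a standard quotient estimate, using the $1$-Lipschitz property of $d(\cdot, X\setminus U_k)$, the multiplicity bound $n+1$ to control $|D(x)-D(y)|$, and the lower bound $D(z) \geq \sigma\delta_{i+1}$, yields $|\phi_k(x)-\phi_k(y)| \lesssim d(x,y)/(\sigma\delta_{i+1})$ with a constant depending only on $n$; (ii) only indices $k$ with $x \in U_k$ or $y \in U_k$ contribute, of which there are at most $2(n+1)$; (iii) for each such $k$, $U_k$ meets $\{x,y\} \subseteq U_{k_0}$, so $x_k, x_{k_0} \in U_k \cup U_{k_0}$ and $d(x_k, x_{k_0}) \leq 2\delta_{i+1}$ by the mesh bound.

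To apply the inductive hypothesis in (iii), I check using~\eqref{deltai+1} and Lemma~\ref{epsilon decreasing} that $2\delta_{i+1} < \sigma\delta_i$ (with huge room, as $\delta_{i+1}/\delta_i \leq 1/L^2$), whereupon the hypothesis gives $|f_i(x_k) - f_i(x_{k_0})| \leq \tfrac{L}{2}\tfrac{\epsilon_i}{\delta_i}\cdot 2\delta_{i+1} = \epsilon_{i+1}$ by direct substitution of the $\delta_{i+1}$ recurrence. Combining this with (i) and (ii), the base part contributes at most a polynomial-in-$n$ multiple of $\tfrac{\epsilon_{i+1}}{\sigma\delta_{i+1}} d(x,y)$, while the simplex part, whose $\ell^2$-norm is controlled via Pythagoras on the orthogonal basis vectors $e_{m_i+k}$, contributes at most $\tfrac{\epsilon_{i+1}}{2}\sqrt{2(n+1)}\max_k|\phi_k(x)-\phi_k(y)|$. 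Adding the two and comparing with the target $\tfrac{L}{2}\tfrac{\epsilon_{i+1}}{\delta_{i+1}} = \tfrac{64(n+1)^2}{\sigma^2}\tfrac{\epsilon_{i+1}}{\delta_{i+1}}$ reduces to a polynomial inequality whose $\sigma$-dependence is $1/\sigma$ on the left versus $1/\sigma^2$ on the right; since $\sigma < 1$ this holds with ample slack. The main obstacle is precisely this constant juggling: every individual estimate is routine, but the whole argument must be tuned so that the Lipschitz constant evolves exactly by $(\epsilon_{i+1}/\delta_{i+1})/(\epsilon_i/\delta_i)$, which is why the coupled definitions~\eqref{defnL}, \eqref{epsiloni+1}, \eqref{deltai+1} of $L$, $\epsilon_{i+1}$, $\delta_{i+1}$ are shaped the way they are.
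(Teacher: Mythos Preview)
Your argument is correct and follows essentially the same route as the paper's: induction on $i$, choice of an anchor set $U_{k_0}\ni x,y$ via the Lebesgue number, the rewriting $f_{i+1}(x)-f_{i+1}(y)=\sum_k(\phi_k(x)-\phi_k(y))(p_k-p_{k_0})$, and control of the vertex displacements via the inductive hypothesis. The constants you obtain are bounded by $C(n)/\sigma$ with $C(n)\le 9(n+1)^2$, which is indeed dominated by $L/2=64(n+1)^2/\sigma^2$ since $\sigma<1$.

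There are two organisational differences worth noting. First, the paper separates out an ``isolated'' case (some $d(x,X\setminus U)>2\delta_{j+1}$) via Lemma~\ref{lemisolatedopensetsareokay} and Assumption~\ref{notenonredundantcovers}, because after pulling out the denominators it needs the \emph{upper} bound $\sum_U d(x,X\setminus U)\le 2(n+1)\delta_{j+1}$ from Lemma~\ref{lemsumofweightsiscontrolledinnonisolatedcase}. By working directly with the normalised weights $\phi_k$ and using only $\phi_k\le 1$ in your quotient estimate, you never need that upper bound, so the isolated case is absorbed automatically (there $\phi_k(x)=\phi_k(y)$ for every $k$). This is a genuine, if modest, simplification: your version does not rely on the non-redundancy assumption. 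Second, the paper packages the vertex-distance estimate as Lemma~\ref{edgelengths}, bounding $|p_U-p_0|\le 2\epsilon_{i+1}$ in one stroke, whereas you split $p_k-p_{k_0}$ into the ``base'' component $f_i(x_k)-f_i(x_{k_0})$ and the orthogonal ``simplex'' component and treat them separately; after using $\sum_k(\phi_k(x)-\phi_k(y))=0$ to cancel the shared $e_{m_i+k_0}$, Pythagoras gives the same control. Both decompositions arrive at the same place.
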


We actually only require the following corollary.

\begin{cor}\label{controlledstretching}
	For all $i\geq0$, 
	\begin{equation}\label{lipshitzcor}
		 d(x,y)\leq \delta_{i+1} \implies d(f_{i}(x),f_{i}(y))\leq \epsilon_{i+1}/2.
	\end{equation}
\end{cor}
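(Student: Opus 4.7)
The plan is essentially an algebraic substitution: the recurrence $\delta_{i+1} = \frac{1}{L}\frac{\delta_i}{\epsilon_i}\epsilon_{i+1}$ was calibrated precisely so that applying Proposition~\ref{lemlocallylipschitz} to a point pair at scale $\delta_{i+1}$ yields image points at scale $\epsilon_{i+1}/2$. So the corollary should follow essentially by unwinding definitions.

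First I would check that Proposition~\ref{lemlocallylipschitz} is applicable, i.e.\ that the hypothesis $d(x,y)<\sigma\delta_{i}$ holds whenever $d(x,y)\leq \delta_{i+1}$. Substituting the definition~\eqref{deltai+1} of $\delta_{i+1}$ and using Lemma~\ref{epsilon decreasing} to bound $\epsilon_{i+1}/\epsilon_{i}\leq 1/L$, we get
\[
d(x,y)\leq \delta_{i+1} = \frac{1}{L}\frac{\epsilon_{i+1}}{\epsilon_{i}}\delta_{i} \leq \frac{\delta_{i}}{L^{2}}.
\]
Since $L = 128(n+1)^{2}/\sigma^{2}\geq 1/\sigma$ (and in fact much larger), this gives $d(x,y)<\sigma\delta_{i}$ comfortably.

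Then I would apply Proposition~\ref{lemlocallylipschitz} and plug in the bound $d(x,y)\leq \delta_{i+1}$ together with~\eqref{deltai+1}:
\[
d(f_{i}(x),f_{i}(y))\leq \frac{L}{2}\frac{\epsilon_{i}}{\delta_{i}}d(x,y) \leq \frac{L}{2}\frac{\epsilon_{i}}{\delta_{i}}\cdot \frac{1}{L}\frac{\delta_{i}}{\epsilon_{i}}\epsilon_{i+1} = \frac{\epsilon_{i+1}}{2}.
\]

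There is no real obstacle here; the only substantive issue is making sure the hypothesis of Proposition~\ref{lemlocallylipschitz} is met, and this is immediate from the choice of $L$ relative to $\sigma$. The $i=0$ case is slightly degenerate since $\delta_{0}=\epsilon_{0}=1$ and $f_{0}$ is the zero map, so the inequality holds trivially, but the same computation goes through in any case.
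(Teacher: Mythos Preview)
Your proof is correct and follows essentially the same route as the paper: handle $i=0$ trivially (since $f_{0}$ is constant), verify that $\delta_{i+1}<\sigma\delta_{i}$ so that Proposition~\ref{lemlocallylipschitz} applies, and then substitute~\eqref{deltai+1} into the Lipschitz bound to obtain $\epsilon_{i+1}/2$. The only cosmetic difference is that the paper bounds $\delta_{i+1}\leq\sigma\delta_{i}$ by expanding $L=128(n+1)^{2}/\sigma^{2}$ and using only $\epsilon_{i+1}\leq\epsilon_{i}$, whereas you use the sharper $\epsilon_{i+1}/\epsilon_{i}\leq 1/L$ from Lemma~\ref{epsilon decreasing} together with $L\geq 1/\sigma$; both arrive at the same conclusion.
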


\begin{proof}
	The case of $i=0$ trivially holds because $f_{0}(x)=f_{0}(y)$ for all $x,y\in X$. For $i\geq 1$, recall the definition of $\delta_{i+1}$ from~\eqref{deltai+1},
	\[
	\delta_{i+1} = \frac{1}{L}\frac{\delta_{i}}{\epsilon_{i}}\epsilon_{i+1} = \frac{\sigma^{2}}{128(n+1)^{2}}\frac{\delta_{i}}{\epsilon_{i}}\epsilon_{i+1},
	\] 
	and note that $\sigma<1$, $128(n+1)^{2}\geq 1$, and $\epsilon_{i+1}\leq \epsilon_{i}$ from Lemma~\ref{epsilon decreasing}, so $\delta_{i+1}\leq \sigma \delta_{i}$, and therefore by Proposition~\ref{lemlocallylipschitz},
	\[
	d(x,y)\leq \delta_{i+1} \implies d(f_{i}(x),f_{i}(y))\leq \epsilon_{i+1}/2.\qedhere
	\]
\end{proof}

First we need some lemmas. Throughout the following lemmas, we impose conditions so that open sets, $U$, are not the entirety of $X$; this is simply so that $d(x,X\setminus U)$ is well-defined for $x\in X$.
	
	\begin{lem}\label{lemweightfunctionsarelipschitz}
		For $X$ a metric space, $x,y\in X$, and $U\subsetneq X$ an open set, we have
		\[
		|d(y,X\setminus U)-d(x,X\setminus U)| \leq d(x,y).
		\]
	\end{lem}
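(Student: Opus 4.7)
The plan is to show that the function $g\colon X \to \mathbb{R}$ defined by $g(z) = d(z, X\setminus U)$ is $1$-Lipschitz, which is the statement we want. This is the classical fact that distance-to-a-set functions are $1$-Lipschitz, and the argument uses only the triangle inequality together with the definition of $d(z, X\setminus U)$ as an infimum. The hypothesis that $U\subsetneq X$ is there only to ensure $X\setminus U$ is non-empty so that these infima are finite; no other property of $U$ is used.

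First I would fix $x,y\in X$ and an arbitrary $z\in X\setminus U$. By the triangle inequality, $d(y,z) \leq d(y,x) + d(x,z)$. Taking the infimum over $z\in X\setminus U$ on the right-hand side (noting that this infimum is $d(x, X\setminus U)$) and then on the left, we obtain
\[
d(y, X\setminus U) \leq d(x,y) + d(x, X\setminus U).
\]
Rearranging gives $d(y, X\setminus U) - d(x, X\setminus U) \leq d(x,y)$.

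Then I would swap the roles of $x$ and $y$ in the above argument to obtain the symmetric inequality $d(x, X\setminus U) - d(y, X\setminus U) \leq d(x,y)$. Combining the two inequalities yields
\[
|d(y, X\setminus U) - d(x, X\setminus U)| \leq d(x,y),
\]
as required. There is no genuine obstacle here; the only subtlety is being careful that the infimum is well-defined, which the hypothesis $U\subsetneq X$ guarantees.
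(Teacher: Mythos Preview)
Your proof is correct and follows essentially the same approach as the paper: apply the triangle inequality to an arbitrary $z\in X\setminus U$, take the infimum to get one inequality, then invoke symmetry in $x$ and $y$ to obtain the other direction.
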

	\begin{proof}
		For any $z\in X\setminus U$,  by definition and then the triangle inequality, we have
		\[
		d(y,X\setminus U)\leq d(y,z) \leq d(x,y) +d(x,z).
		\]
		Hence,
		\[
		d(x,X\setminus U) = \inf_{z\in X\setminus U}d(x,z) \geq d(y,X\setminus U)- d(x,y).
		\]
		This argument was symmetric in $x$ and $y$, so the lemma holds.
	\end{proof}

\begin{lem}\label{lemisolatedopensetsareokay}
	Suppose $X$ is a metric space, and $\mathcal{U}$ is an open cover of $X$ with mesh at most $0<\delta<\diam(X)$ such that, for any distinct $U,V\in\mathcal{U}$, $U\nsubseteq V$. For any $x\in X$, if there is an element $U$ of $\mathcal{U}$ such that $d(x,X\setminus U)>2\delta$, then $d(U,X\setminus U)>\delta$ and $U\cap V=\emptyset$ for any $V\in\mathcal{U}\setminus\{U\}$, in particular $x$ lies exclusively in $U$.
\end{lem}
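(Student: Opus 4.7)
The plan is to use the mesh bound together with the hypothesis $d(x,X\setminus U) > 2\delta$ to obtain a strict lower bound $d(U,X\setminus U) > \delta$, and then use the non-redundancy condition to rule out non-empty intersections of $U$ with any other element of $\mathcal{U}$. The assumption $\delta < \diam(X)$ is used only to guarantee that no single $U \in \mathcal{U}$ can equal all of $X$, so that $X \setminus U$ is non-empty and $d(\cdot,X\setminus U)$ is well-defined.

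First I would observe that since $X\setminus U$ is closed and $d(x,X\setminus U) > 2\delta > 0$, we must have $x\in U$. Next, for an arbitrary $u\in U$, the mesh condition gives $d(x,u)\leq \diam(U)\leq \delta$, so for every $z\in X\setminus U$ the triangle inequality yields
\[
d(u,z)\geq d(x,z)-d(x,u) > 2\delta - \delta = \delta.
\]
Taking the infimum over $z$ and then over $u\in U$ gives $d(U,X\setminus U)\geq\delta$. To upgrade this to a strict inequality, I would carry the slack from the hypothesis through the estimate: writing $d(x,X\setminus U) = 2\delta + \eta$ for some $\eta > 0$, the argument above produces $d(u,z)\geq \delta + \eta$, whence $d(U,X\setminus U)\geq \delta + \eta > \delta$.

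For the second claim, suppose for contradiction that some $V\in\mathcal{U}\setminus\{U\}$ meets $U$ at a point $w$. The non-redundancy Assumption~\ref{notenonredundantcovers} says $V\nsubseteq U$, so there exists $v\in V\setminus U\subseteq X\setminus U$. Then the mesh bound applied to $V$ gives $d(w,v)\leq \diam(V)\leq \delta$, hence $d(w,X\setminus U)\leq \delta$. This contradicts the strict bound $d(U,X\setminus U) > \delta$ just established, since $w\in U$. Therefore $U\cap V=\emptyset$ for every $V\in\mathcal{U}\setminus\{U\}$, and combined with $x\in U$ this shows $x$ lies in no other element of the cover.

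The only genuinely delicate step is the passage from the pointwise estimate $d(u,X\setminus U)>\delta$ to a strict bound on the infimum $d(U,X\setminus U)$; without exploiting the extra slack $\eta$ hidden in the strict hypothesis $d(x,X\setminus U)>2\delta$, one would only get $d(U,X\setminus U)\geq\delta$, which is not strong enough to produce the contradiction with $d(w,X\setminus U)\leq\delta$ in the second part.
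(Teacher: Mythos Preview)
Your proof is correct and follows essentially the same approach as the paper: both use the mesh bound and triangle inequality to get $d(U,X\setminus U)>\delta$, then combine this with the mesh bound on a hypothetical $V$ meeting $U$ to invoke non-redundancy. Your argument for disjointness is the contrapositive of the paper's (you pick $v\in V\setminus U$ and derive a contradiction, whereas the paper shows every $v\in V$ lies in $U$ and concludes $V\subseteq U$), and you are more explicit than the paper about carrying the slack $\eta$ through to ensure the infimum $d(U,X\setminus U)$ is strictly greater than $\delta$ rather than merely $\geq\delta$.
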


\begin{proof}
	Observe that for any $x\in X$ and $U\subsetneq X$ open, then $d(x,X\setminus U)$ is non-zero if and only if $x\in U$. Therefore, as $\delta>0$, if $x\in X$ and $U\in\mathcal{U}$ with $d(x,X\setminus U)>2\delta$, then $x\in U$. We see that $d(U,X\setminus U)>\delta$ by applying the triangle inequality while bearing in mind that the mesh constraint on $\mathcal{U}$ means that, for any $y\in U$, $d(x,y)\leq \delta$.
	To see that $U$ intersects no other element of $\mathcal{U}$, observe that, if, for some $V\in\mathcal{U}$, there exists $y\in U\cap V$, then, for any $v\in V$, $d(y,v)\leq \delta$ by the mesh constraint on $\mathcal{U}$, but $d(U,X\setminus U)>\delta$ so $v\in U$, meaning $V\subset U$. Finally, by the non-redundancy restriction on $\mathcal{U}$ described in Assumption~\ref{notenonredundantcovers}, this must mean $V=U$.
\end{proof}

The message that the reader should take away from this lemma is that if ${d(x,X\setminus U)}$ is much larger than the scale of the cover $\mathcal{U}$, then $x$ lies in precisely one subset $U$ and this $U$ is isolated from the rest of $X$ at this scale.

	\begin{lem}\label{lemsumofweightsiscontrolledinnonisolatedcase}
		Suppose $X$ is a metric space and $\mathcal{U}$ is an open cover of $X$ with mesh at most $\delta<\diam(X)$, Lebesgue number at least $\xi$, and multiplicity at most $m+1$, such that, for any distinct $U,V\in\mathcal{U}$, $V\nsubseteq U$. If $x\in X$ is such that $d(x,X\setminus U)\leq 2\delta$ for every $U\in \mathcal{U}$, then 
		\[
		\xi\leq \left| \sum_{U\in\mathcal{U}} d(x,X\setminus U)\right| =  \sum_{U\in\mathcal{U}} d(x,X\setminus U) \leq 2(m+1)\delta.
		\]
	\end{lem}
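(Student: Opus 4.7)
The plan is to establish both inequalities by exploiting, separately, the Lebesgue number property (for the lower bound) and the multiplicity together with the standing hypothesis $d(x, X \setminus U) \leq 2\delta$ (for the upper bound). The equality $|\sum_{U} d(x, X \setminus U)| = \sum_{U} d(x, X \setminus U)$ is immediate, since every term $d(x, X \setminus U)$ is non-negative, so the absolute value signs can be dropped at once.

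For the lower bound, I would invoke the fact that for any family $(a_U)_{U \in \mathcal{U}}$ of non-negative reals, $\sum_{U \in \mathcal{U}} a_U \geq \sup_{U \in \mathcal{U}} a_U$ (since removing a single term from a sum of non-negatives leaves a non-negative quantity). Applying this with $a_U = d(x, X \setminus U)$ gives
\[
\sum_{U \in \mathcal{U}} d(x, X \setminus U) \;\geq\; \sup_{U \in \mathcal{U}} d(x, X \setminus U) \;=\; \mathcal{L}(\mathcal{U},x) \;\geq\; \mathcal{L}(\mathcal{U}) \;\geq\; \xi,
\]
by the definition of the Lebesgue number and the hypothesis on $\mathcal{U}$. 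This part uses neither the standing hypothesis on $x$ nor the multiplicity, only the Lebesgue number bound.

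For the upper bound, the key observation is that only finitely many terms in the sum are non-zero: since each $U \in \mathcal{U}$ is open, $d(x, X \setminus U) > 0$ if and only if $x \in U$, so the non-zero terms are indexed precisely by $\{U \in \mathcal{U} : x \in U\}$. The multiplicity hypothesis bounds the cardinality of this set by $m+1$. For each such $U$, the standing hypothesis gives $d(x, X \setminus U) \leq 2\delta$. Summing, $\sum_{U} d(x, X \setminus U) \leq (m+1) \cdot 2\delta = 2(m+1)\delta$, as desired.

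I do not anticipate any serious obstacle: the argument is essentially a bookkeeping exercise that packages the three hypothesis on $\mathcal{U}$ (mesh, Lebesgue number, multiplicity) together with the standing assumption on $x$. The non-redundancy assumption appears in the statement for consistency with the preceding lemma but is not needed here; one may simply remark that it is not used in this particular estimate.
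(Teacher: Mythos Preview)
Your proof is correct and follows essentially the same approach as the paper: the lower bound comes from a single term guaranteed by the Lebesgue number, the equality from non-negativity, and the upper bound from combining the multiplicity constraint with the hypothesis $d(x,X\setminus U)\leq 2\delta$. Your observation that the non-redundancy hypothesis is not actually used in this estimate is also accurate.
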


	\begin{proof}
		By the Lebesgue number property, $B(x,\xi)\subseteq V$ for some $V\in \mathcal{U}$, meaning $d(x,X\setminus V)\geq \xi$. All other terms in the sum are non-negative, so the left inequality indeed holds.
		
		The middle equality is just the observation that each term in this sum is non-negative.
		
		The right inequality is the combination of the observations; that $d(x,X\setminus U)>0$ if and only if $x \in U$, that $x$ can be in at most $m+1$ members of the cover $\mathcal{U}$ because $\mathcal{U}$ has multiplicity at most $m+1$, and that $d(x,X\setminus U)$ is at most $2\delta$ for every $U\in\mathcal{U}$ by assumption.
	\end{proof}
	\begin{lem}\label{lemsumsofweightsislipschitz}
		Suppose $X$ is a metric space and $\mathcal{U}$ is an open cover of $X$ with multiplicity at most $m+1$. Then, for any $x,y\in X$, and $U,V\in \mathcal{U}$ which are not the entirety of $X$,
		\[
		\left|\sum_{U\in\mathcal{U}} d(x,X\setminus U) - \sum_{V\in\mathcal{U}} d(y,X\setminus V) \right| \leq 2(m+1) d(x,y).
		\]
	\end{lem}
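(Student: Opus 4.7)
The plan is to combine the pointwise Lipschitz estimate from Lemma~\ref{lemweightfunctionsarelipschitz} with the multiplicity constraint on $\mathcal{U}$. Applying the triangle inequality to the whole sum and then invoking Lemma~\ref{lemweightfunctionsarelipschitz} on each individual term would yield a bound of $|\mathcal{U}|\,d(x,y)$, which is far too weak since $|\mathcal{U}|$ can be arbitrarily large. The key observation that saves us is that $d(x,X\setminus U) > 0$ if and only if $x\in U$, so the only terms that contribute non-trivially to either sum are those where $x$ or $y$ lies in $U$, and the multiplicity hypothesis bounds the number of such $U$.

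First I would let $\mathcal{V} \coloneqq \{U \in \mathcal{U} \mid x \in U \text{ or } y \in U\}$. Then for every $U \in \mathcal{U}\setminus\mathcal{V}$, both $d(x,X\setminus U) = 0$ and $d(y,X\setminus U) = 0$, so these terms do not contribute to either sum. Hence the difference of the two sums can be rewritten as a sum indexed only over $\mathcal{V}$:
\[
\sum_{U\in\mathcal{U}} d(x,X\setminus U) - \sum_{U\in\mathcal{U}} d(y,X\setminus U) = \sum_{U\in\mathcal{V}} \bigl(d(x,X\setminus U) - d(y,X\setminus U)\bigr).
\]

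Next I would apply the triangle inequality followed by Lemma~\ref{lemweightfunctionsarelipschitz} to each term, giving
\[
\left|\sum_{U\in\mathcal{V}} \bigl(d(x,X\setminus U) - d(y,X\setminus U)\bigr)\right| \leq \sum_{U\in\mathcal{V}} d(x,y) = |\mathcal{V}|\,d(x,y).
\]
Finally, the multiplicity hypothesis gives $|\{U \in \mathcal{U} \mid x\in U\}| \leq m+1$ and likewise for $y$, so $|\mathcal{V}| \leq 2(m+1)$ by a union bound, yielding the desired estimate.

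No step here looks like a real obstacle; the only subtlety is recognising that the natural term-by-term application of Lemma~\ref{lemweightfunctionsarelipschitz} wastes information, and that one must first prune the sum to the support $\mathcal{V}$ before bounding the number of terms via multiplicity.
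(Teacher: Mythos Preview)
Your proof is correct and follows essentially the same approach as the paper: both combine the triangle inequality, Lemma~\ref{lemweightfunctionsarelipschitz}, and the observation that at most $m+1$ terms from each of $x$ and $y$ are non-zero by multiplicity. Your version simply makes explicit the pruning to the support $\mathcal{V}$ that the paper leaves implicit in its one-sentence proof.
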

	\begin{proof}
		This follows from combining the triangle inequality, the observation that there can be at most $m+1$ non-zero contributions from $x$ and at most $m+1$ non-zero contributions from $y$, and Lemma~\ref{lemweightfunctionsarelipschitz}.
	\end{proof}
	
	\begin{lem}\label{final 4}
		For any real numbers $a,b,c,d$,
		\[
		|ab-cd|\leq |a||b-d| + |a-c||d|.
		\]
	\end{lem}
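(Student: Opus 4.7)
The plan is to use the classic ``add and subtract'' trick together with the triangle inequality for the absolute value on $\mathbb{R}$. Specifically, I would rewrite $ab - cd$ by inserting the term $\pm ad$, giving
\[
ab - cd = ab - ad + ad - cd = a(b-d) + (a-c)d.
\]
Then applying the triangle inequality $|x+y| \leq |x| + |y|$ and the multiplicativity of absolute value $|xy| = |x||y|$ yields
\[
|ab - cd| = |a(b-d) + (a-c)d| \leq |a(b-d)| + |(a-c)d| = |a||b-d| + |a-c||d|,
\]
which is exactly the claimed inequality.

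There is essentially no obstacle here; the only choice to make is whether to insert $\pm ad$ or $\pm cb$, and the former is picked to match the grouping $|a||b-d| + |a-c||d|$ on the right-hand side. The lemma is a standard auxiliary fact, presumably invoked later when estimating the distance between $f_{i+1}(x)$ and $f_{i+1}(y)$, where the quotient form~\eqref{fi+1} naturally produces differences of products (numerator over denominator evaluated at two different points), and this inequality will let the author split such differences into two controllable pieces.
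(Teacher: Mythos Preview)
Your proof is correct and is exactly the paper's argument: the paper simply says to observe $|ab-cd|=|ab-ad+ad-cd|$ and apply the triangle inequality, which is precisely your add-and-subtract of $ad$.
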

	\begin{proof}
		Observe $|ab-cd|=|ab-ad+ad-cd|$ and use the triangle inequality.
	\end{proof}

	\begin{lem}\label{edgelengths}
		Let $i\geq0$, $f_{i}$, $\mathcal{U}_{i+1}$, and $\{p_{U}\mid U\in\mathcal{U}_{i+1}\}$ be as in the construction, given in Subsection~\ref{subsectionconstructingmaps}. Further assume that $f_{i}$ satisfies
		\begin{equation}\label{epsilondeltainductionassumption}
		d(x,y)\leq \delta_{i+1} \implies d(f_{i}(x),f_{i}(y))\leq \epsilon_{i+1}/2,
		\end{equation}
		for $x,y\in X$.
		If $U,V\in\mathcal{U}_{i+1}$, such that $U\cap V\neq \emptyset$, then 
		\[
		|p_{U}-p_{V}| = d(p_{U},p_{V})\leq 2\epsilon_{i+1}.
		\]
	\end{lem}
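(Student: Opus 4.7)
The key observation is that the vectors $p_U$ and $p_V$ decompose naturally into two orthogonal components: the part lying in the span of $\{e_1, \ldots, e_{m_i}\}$ (coming from the previous stage $f_i$) and the part in newly-introduced orthogonal directions $e_{m_i+k}$ with weights $\epsilon_{i+1}/2$. So my plan is to compute $|p_U-p_V|^2$ directly using the Pythagorean theorem, then bound each piece separately.

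First I would write $p_U - p_V = (f_i(x_U) - f_i(x_V)) + \tfrac{\epsilon_{i+1}}{2}(e_{m_i+k_U} - e_{m_i+k_V})$. Because $f_i$ lands in the subspace spanned by $e_1, \ldots, e_{m_i}$ while $e_{m_i+k_U}$ and $e_{m_i+k_V}$ (with $k_U \neq k_V$ since $U\neq V$) live in the orthogonal complement, the two summands are orthogonal, and the second summand has squared norm $\epsilon_{i+1}^2/2$. Hence
\[
|p_U - p_V|^2 = |f_i(x_U) - f_i(x_V)|^2 + \tfrac{1}{2}\epsilon_{i+1}^2.
\]

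Next I would control the first term via the hypothesis \eqref{epsilondeltainductionassumption}. Since $U \cap V \neq \emptyset$, pick any $z \in U \cap V$. Since $\mathrm{mesh}(\mathcal{U}_{i+1})\leq \delta_{i+1}$ and both $x_U,z\in U$ (respectively $z,x_V\in V$), we have $d(x_U,z)\leq \delta_{i+1}$ and $d(z,x_V)\leq \delta_{i+1}$. Applying \eqref{epsilondeltainductionassumption} twice along with the triangle inequality in $\ell^2$ yields
\[
|f_i(x_U) - f_i(x_V)| \leq |f_i(x_U) - f_i(z)| + |f_i(z) - f_i(x_V)| \leq \tfrac{\epsilon_{i+1}}{2} + \tfrac{\epsilon_{i+1}}{2} = \epsilon_{i+1}.
\]

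Substituting back gives $|p_U - p_V|^2 \leq \epsilon_{i+1}^2 + \tfrac{1}{2}\epsilon_{i+1}^2 = \tfrac{3}{2}\epsilon_{i+1}^2$, so $|p_U - p_V| \leq \sqrt{3/2}\,\epsilon_{i+1} \leq 2\epsilon_{i+1}$, as required. (The case $U=V$ is trivial.) There is no real obstacle here; the only subtle point is recognising that one cannot simply apply \eqref{epsilondeltainductionassumption} once to $x_U,x_V$ because $d(x_U,x_V)$ might be as large as $2\delta_{i+1}$, which forces routing through an intermediate point $z$ of the intersection.
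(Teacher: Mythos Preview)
Your proof is correct. Both your argument and the paper's hinge on the same idea: pick a point $z$ in $U\cap V$, use the mesh bound to get $d(x_U,z),d(z,x_V)\leq\delta_{i+1}$, and apply the hypothesis~\eqref{epsilondeltainductionassumption} twice. The difference is in how the pieces are assembled. The paper simply runs the triangle inequality along the chain $p_U,\, f_i(x_U),\, f_i(z),\, f_i(x_V),\, p_V$, using $d(p_U,f_i(x_U))=d(p_V,f_i(x_V))=\epsilon_{i+1}/2$ from the construction, to obtain four summands of $\epsilon_{i+1}/2$ and hence exactly $2\epsilon_{i+1}$. You instead exploit the orthogonality between the ``old'' coordinates (where $f_i$ lives) and the ``new'' coordinates $e_{m_i+k}$, apply Pythagoras, and arrive at the sharper bound $\sqrt{3/2}\,\epsilon_{i+1}$. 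Your route extracts a little more from the geometry of the construction; the paper's route is slightly quicker and avoids any appeal to orthogonality. Either is perfectly adequate here, since only the bound $2\epsilon_{i+1}$ is used downstream.
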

	\begin{proof}
		Assume $x\in U\cap V$. By construction of $f_{i+1}$,~\eqref{fi+1}, we have that there exists $x_{U} \in U$ and $x_{V}\in V$ such that $d(f_{i}(x_{U}),p_{U})=\epsilon_{i+1}/2$ and $d(f_{i}(x_{V}),p_{V})=\epsilon_{i+1}/2$. Also, by the assumption~\eqref{epsilondeltainductionassumption}, and noting that $d(x,x_{U}),d(x,x_{V})\leq \delta_{i+1}$, we get that $d(f_{i}(x),f_{i}(x_{U})),d(f_{i}(x),f_{i}(x_{V}))\leq\epsilon_{i+1}/2$. Combining this all together and using the triangle inequality, we get $|p_{U}-p_{V}| = d(p_{U},p_{V})\leq2\epsilon_{i+1}$, the desired result.
	\end{proof}

	\begin{proof}[Proof of Proposition~\ref{lemlocallylipschitz}]
		
	We proceed by induction.
	
	When $i=0$, $f_{0}(x) = f_{0}(y)$ for any $x,y\in X$ so $d(f_{0}(x),f_{0}(y)) = 0\leq \frac{L}{2}\frac{\epsilon_{0}}{\delta_{0}}d(x,y)$ trivially holds. Thus, the base case is verified.
	
	When $i=j$, assume that if $x,y\in X$ are such that $d(x,y)<\sigma\delta_{j}$, then 
	\[
	{d(f_{j}(x),f_{j}(y))\leq \frac{L}{2}\frac{\epsilon_{j}}{\delta_{j}}d(x,y).}
	\]
	In particular, plugging in the definition of $\delta_{j+1}$, given in~\eqref{deltai+1}, we get
	\begin{equation}\label{loclip induction assumption}
	d(x,y)\leq \delta_{j+1} \implies d(f_{j}(x),f_{j}(y))\leq \epsilon_{j+1}/2.
	\end{equation}
	In fact, this implication is all we will actually use from the induction hypothesis.
	
	When $i=j+1$, if we pick $x,y\in X$ with $d(x,y)\leq \sigma\delta_{j+1}$ and write out explicitly $d(f_{j+1}(x),f_{j+1}(y))$ from the definition of $f_{j+1}$, given in~\eqref{fi+1}, we get
	
		\begin{equation}\label{equationexplicitdistanceexpression}
		\left|\frac{\sum_{U\in\mathcal{U}_{j+1}} d(x,X\setminus U)p_{U}}{\sum_{U\in\mathcal{U}_{j+1}} d(x,X\setminus U)} - \frac{\sum_{U\in\mathcal{U}_{j+1}} d(y,X\setminus U)p_{U}}{\sum_{U\in\mathcal{U}_{j+1}} d(y,X\setminus U)} \right|
		\end{equation}
		
		If $d(x,X\setminus U)>2\delta_{j+1}$ or $d(y,X\setminus U)>2\delta_{j+1}$ for any $U\in\mathcal{U}_{j+1}$, then, by Lemma~\ref{lemisolatedopensetsareokay}; $x$ or $y$ is exclusively in $U$, $U\cap V = \emptyset$ for all $V\in\mathcal{U}\setminus\{U\}$, and $d(U,X\setminus U)>\delta$. We can use Lemma~\ref{lemisolatedopensetsareokay} here because of the non-redundancy assumption on $\mathcal{U}$ justified in Assumption~\ref{notenonredundantcovers}. As $x$ and $y$ are close, $d(x,y)\leq \sigma\delta_{j+1}\leq \delta_{j+1}$, we have that both $x$ and $y$ lie in $U$. Therefore both $x$ and $y$ lie exclusively in $U$, meaning $d(x,X\setminus U)$ and $d(y,X\setminus U)$ are the only non-zero terms within the sums in Expression~\eqref{equationexplicitdistanceexpression}. This means both $f_{j+1}(x)$ and $f_{j+1}(y)$ evaluate to $p_{U}$ and, therefore, this distance is equal to zero, which trivially satisfies the desired bound.
		
		Therefore, we may assume that both $d(x,X\setminus U)\leq 2\delta_{j+1}$ and $d(y,X\setminus U)\leq 2\delta_{j+1}$, for every $U\in\mathcal{U}_{j+1}$, allowing us to use Lemma~\ref{lemsumofweightsiscontrolledinnonisolatedcase} in the following manipulations of Expression~\eqref{equationexplicitdistanceexpression}. 
		
		Note that $\delta_{j+1}<1=\diam(X)$, which can be seen by combining Lemma~\ref{epsilon decreasing}, the definition of $\delta_{j+1}$ from~\eqref{deltai+1}, and $\epsilon_{0} =1$. Therefore, no element of $\mathcal{U}_{j+1}$ is the entirety of $X$, so $d(x,X\setminus U)$ is well defined, and we can apply lemmas~\ref{lemweightfunctionsarelipschitz} and~\ref{lemsumsofweightsislipschitz} in the following.

		Since $d(x,y) < \sigma\delta_{j+1}$, the subset $\{x,y\}$ of $X$ is contained in the ball $B(x,\sigma\delta_{j+1})\subseteq B(x,\mathcal{L}(\mathcal{U}_{j+1}))$, which is contained in $U_{0}\in\mathcal{U}_{j+1}$ for some $U_{0}$ by the Lebesgue number property of $\mathcal{U}_{j+1}$, and so $\{x,y\}\subset U_{0}$ too.  Let $p_{0}$ be the vertex corresponding to $U_{0}$ as in the construction, given in~\eqref{pU}.
		\[
		\eqref{equationexplicitdistanceexpression}=\left|\frac{\sum_{U\in\mathcal{U}_{j+1}} d(x,X\setminus U)p_{U}}{\sum_{U\in\mathcal{U}_{j+1}} d(x,X\setminus U)} -p_{0} +p_{0} - \frac{\sum_{U\in\mathcal{U}_{j+1}} d(y,X\setminus U)p_{U}}{\sum_{U\in\mathcal{U}_{j+1}} d(y,X\setminus U)} \right|
		\]
		Noting that 
		\[
		\frac{\sum_{U\in\mathcal{U}_{j+1}} d(x,X\setminus U)p_{0}}{\sum_{U\in\mathcal{U}_{j+1}} d(x,X\setminus U)}=p_{0},
		\]
		trivially, we get
		\[
		=\left|\frac{\sum_{U\in\mathcal{U}_{j+1}} d(x,X\setminus U)(p_{U}-p_{0})}{\sum_{U\in\mathcal{U}_{j+1}} d(x,X\setminus U)} - \frac{\sum_{U\in\mathcal{U}_{j+1}} d(y,X\setminus U)(p_{U}-p_{0})}{\sum_{U\in\mathcal{U}_{j+1}} d(y,X\setminus U)} \right|.
		\]
		Pulling out the denominators and using the lower bound from Lemma~\ref{lemsumofweightsiscontrolledinnonisolatedcase}, we observe
		\begin{align*}
		\leq\frac{1}{\sigma^{2}{\delta_{j+1}}^{2}}&\left|\sum_{U\in\mathcal{U}_{j+1}} d(y,X\setminus U)\sum_{U\in\mathcal{U}_{j+1}} d(x,X\setminus U)(p_{U}-p_{0})\right.\\
		&\left. -\sum_{U\in\mathcal{U}_{j+1}} d(x,X\setminus U)\sum_{U\in\mathcal{U}_{j+1}} d(y,X\setminus U)(p_{U}-p_{0})\right|.\\
		\end{align*}
		Using Lemma~\ref{final 4}, we see
		\begin{align*}
		\leq \frac{1}{\sigma^{2}{\delta_{j+1}}^{2}}\left(\left|\sum_{U\in\mathcal{U}_{j+1}} d(y,X\setminus U)\right|  \left|\sum_{U\in\mathcal{U}_{j+1}} (d(x,X\setminus U)-d(y,X\setminus U))(p_{U}-p_{0})\right|\right. &\\ \left.+\left|\sum_{U\in\mathcal{U}_{j+1}} (d(y,X\setminus U)-d(x,X\setminus U))\right|\left|\sum_{U\in\mathcal{U}_{j+1}} d(y,X\setminus U)(p_{U}-p_{0})\right|\right).&\\
		\end{align*}
		Now, using lemmas~\ref{lemsumofweightsiscontrolledinnonisolatedcase} and~\ref{lemsumsofweightsislipschitz}, we obtain
		\begin{align*}
		\leq \frac{1}{\sigma^{2}{\delta_{j+1}}^{2}}\Bigg(2(n+1)\delta_{j+1} & \left|\sum_{U\in\mathcal{U}_{j+1}} (d(x,X\setminus U)-d(y,X\setminus U))(p_{U}-p_{0})\right| \\ &\left.+2(n+1)d(x,y)\left|\sum_{U\in\mathcal{U}_{j+1}} d(y,X\setminus U)(p_{U}-p_{0})\right|\right).\\
		\end{align*}
		Using the triangle inequality
		\begin{align*}
		\leq \frac{1}{\sigma^{2}{\delta_{j+1}}^{2}}\Bigg(2(n+1)\delta_{j+1} & \sum_{U\in\mathcal{U}_{j+1}} \left|(d(x,X\setminus U)-d(y,X\setminus U))\right|\left|(p_{U}-p_{0})\right| \\ &\left.+2(n+1)d(x,y)\sum_{U\in\mathcal{U}_{j+1}}\left| d(y,X\setminus U)\right|\left|(p_{U}-p_{0})\right|\right).\\
		\end{align*}

		Note, for any $z\in X$, $d(z,X\setminus U)$ is non-zero if and only if $z\in U$, so the first sum has a non-zero contribution from $(p_{U}-p_{0})$ only if $x\in U$ or $y\in U$. We assumed $x,y\in U_{0}$ so, in fact, we have a non-zero contribution from $(p_{U}-p_{0})$ only if $x$ or $y$ is in $U\cap U_{0}$. Without loss of generality, assume $x\in U$. By Lemma~\ref{edgelengths} applied to $f_{j}$, we know $|p_{U}-p_{0}| = d(p_{U},p_{0})\leq2\epsilon_{j+1}$. We can apply Lemma~\ref{edgelengths} here because $f_{j}$ satisfies~\eqref{loclip induction assumption}, which was obtained from the induction assumption. Similarly, in the second sum, $|p_{U}-p_{0}|\leq2\epsilon_{j+1}$ whenever there is a non-zero contribution from $d(y,X\setminus U)$. So, we can continue to bound the above expression by
		\begin{align*}
		\leq \frac{1}{\sigma^{2}{\delta_{j+1}}^{2}}\Bigg(2(n+1)\delta_{j+1} & \sum_{U\in\mathcal{U}_{j+1}} \left|(d(x,X\setminus U)-d(y,X\setminus U))\right|2\epsilon_{j+1} \\ &\left.+2(n+1)d(x,y)\sum_{U\in\mathcal{U}_{j+1}}\left| d(y,X\setminus U)\right|2\epsilon_{j+1}\right).\\
		\end{align*}
		Finally, by lemmas~\ref{lemweightfunctionsarelipschitz} and~\ref{lemsumofweightsiscontrolledinnonisolatedcase},
		\[
		\leq\frac{1}{\sigma^{2}{\delta_{j+1}}^{2}}\bigg(2(n+1)\delta_{j+1}2(n+1)d(x,y)2\epsilon_{j+1} + 2(n+1)d(x,y)2(n+1)\delta_{j+1}2\epsilon_{j+1}\bigg)
		\]
		\[
		=\frac{16(n+1)^{2}\epsilon_{j+1}}{\sigma^{2}\delta_{j+1}}d(x,y) \leq \frac{L}{2}\frac{\epsilon_{j+1}}{\delta_{j+1}} d(x,y),
		\]
		which completes the induction.
	\end{proof}
	
\subsection{Separation}
In this subsection we show that the functions $f_{i}$ have been chosen so that points distinguished by the cover $\mathcal{U}_{i}$ remain uniformly distinguished after applying $f_{i}$. More precisely,
	
\begin{lem}\label{final separation}
	For any $i\geq 0$ and $x,y\in X$,
	\[
	\delta_{i+1}< d(x,y) \implies \frac{\epsilon_{i+1}}{\sqrt{2(n+1)}}\leq d(f_{i+1}(x),f_{i+1}(y)).
	\]
\end{lem}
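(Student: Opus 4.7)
The plan is to exploit the orthogonal coordinates that $f_{i+1}$ introduces in its construction. By the definition~\eqref{pU}, each vertex $p_k$ sits at height $\epsilon_{i+1}/2$ in a unique new coordinate direction $e_{m_i+k}$ that is orthogonal to everything $f_i$ could see. Writing the barycentric weights
\[
w_k(z) \coloneqq \frac{d(z, X \setminus U_k)}{\sum_{j} d(z, X \setminus U_j)},
\]
the value of $f_{i+1}(z)$ in the new coordinate $m_i+k$ is exactly $\tfrac{\epsilon_{i+1}}{2} w_k(z)$. Thus to bound $d(f_{i+1}(x), f_{i+1}(y))$ from below, it suffices to bound the contribution from these new coordinates alone:
\[
d(f_{i+1}(x), f_{i+1}(y))^2 \geq \frac{\epsilon_{i+1}^2}{4} \sum_{k} \bigl(w_k(x) - w_k(y)\bigr)^2.
\]

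The first key step is to observe that since $\mesh(\mathcal{U}_{i+1}) \leq \delta_{i+1} < d(x,y)$, no element of $\mathcal{U}_{i+1}$ can contain both $x$ and $y$. Because $w_k(z) \neq 0$ if and only if $z \in U_k$, this means the supports $\{k : w_k(x) \neq 0\}$ and $\{k : w_k(y) \neq 0\}$ are disjoint. Consequently the sum splits cleanly:
\[
\sum_{k} \bigl(w_k(x) - w_k(y)\bigr)^2 = \sum_{k} w_k(x)^2 + \sum_{k} w_k(y)^2.
\]

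The second key step is to bound each of these sums from below. The weights $w_k(z)$ are non-negative, sum to $1$, and, by the multiplicity-$(n+1)$ property of $\mathcal{U}_{i+1}$, are supported on at most $n+1$ indices. Cauchy--Schwarz then gives
\[
1 = \Bigl(\sum_{k: z \in U_k} w_k(z)\Bigr)^2 \leq (n+1) \sum_{k} w_k(z)^2,
\]
so $\sum_k w_k(z)^2 \geq 1/(n+1)$ for both $z = x$ and $z = y$. Combining this with the previous display yields $\sum_k (w_k(x) - w_k(y))^2 \geq 2/(n+1)$, and substituting back produces exactly $d(f_{i+1}(x), f_{i+1}(y)) \geq \epsilon_{i+1}/\sqrt{2(n+1)}$.

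The argument is essentially linear-algebraic once the geometric observation about disjoint supports is in place, so I do not anticipate a real obstacle; the only subtlety is keeping clear that we are lower-bounding the full $\ell^2$ distance by projecting to the new coordinate block, where orthogonality makes the two contributions add in squares and where $f_i$ contributes nothing. The multiplicity bound is what pins down the constant $\sqrt{2(n+1)}$.
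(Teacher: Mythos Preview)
Your proof is correct and follows essentially the same approach as the paper: project to the new coordinate block, use the mesh bound to get disjoint supports, and bound $\sum w_k(z)^2 \geq 1/(n+1)$ from the multiplicity constraint. The only cosmetic difference is that you invoke Cauchy--Schwarz directly where the paper phrases the same inequality via a Lagrange-multiplier lemma.
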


First, we need a lemma.

\begin{lem}\label{final lagrange multipliers}
	The function $f\colon\mathbb{R}^{m}\rightarrow \mathbb{R}$ defined by 
	\[
	f(x_{1},x_{2},\dots,x_{m}) = \sum_{k=1}^{m}x_{k}^{2},
	\]
	for $(x_{1},\dots,x_{m})\in\mathbb{R}^{m}$, restricted to $\{(x_{1},\dots,x_{m})\mid \sum_{k=1}^{m} x_{k} = 1\text{ and }x_{j}\geq 0\text{ for all }j\}$ is minimised by $1/m$ at the point $x_{j}=1/m$ for all $j$.
\end{lem}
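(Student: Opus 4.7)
The plan is to bypass Lagrange multipliers entirely and deduce the bound from a single application of the Cauchy--Schwarz inequality. This is cleaner than a direct critical-point analysis because it avoids having to separately handle the boundary of the simplex (where some $x_j=0$).

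First I would apply the Cauchy--Schwarz inequality to the vectors $(x_1,\dots,x_m)$ and $(1,1,\dots,1)$ in $\mathbb{R}^m$:
\[
\left(\sum_{k=1}^{m} x_k \cdot 1\right)^{2} \leq \left(\sum_{k=1}^{m} x_k^{2}\right)\left(\sum_{k=1}^{m} 1^{2}\right).
\]
Using the constraint $\sum_{k=1}^{m} x_k = 1$, the left-hand side equals $1$ and the rightmost factor equals $m$, so
\[
1 \leq m \sum_{k=1}^{m} x_k^{2}, \qquad \text{i.e.,} \qquad f(x_1,\dots,x_m) \geq \frac{1}{m}.
\]
Second, I would verify that this bound is attained: substituting $x_j = 1/m$ for every $j$ satisfies both constraints (the values are nonnegative and sum to $1$), and yields $f = m\cdot (1/m)^{2} = 1/m$. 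So $1/m$ is both a lower bound and an achieved value, hence the minimum.

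There is no real obstacle here; the nonnegativity constraint $x_j \geq 0$ plays no role in the Cauchy--Schwarz argument (it is used only implicitly to guarantee the feasible region is nonempty and compact so that a minimum exists, but since we have already produced the minimiser explicitly this is moot). If one wanted uniqueness of the minimiser one could note that equality in Cauchy--Schwarz forces $(x_1,\dots,x_m)$ to be proportional to $(1,\dots,1)$, which together with $\sum x_k = 1$ forces $x_j = 1/m$ for all $j$; but the statement of the lemma only requires exhibiting a minimiser, so this remark is optional.
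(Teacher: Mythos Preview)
Your proof is correct. The paper's own proof is a one-line dismissal (``This is standard. For example, one could use Lagrange multipliers.''), so there is essentially no argument to compare against in detail. That said, your Cauchy--Schwarz approach is genuinely different from the suggested Lagrange-multiplier route, and in a useful way: a Lagrange-multiplier argument locates interior critical points but would still require a separate check of the boundary faces of the simplex (where some $x_j=0$), whereas your inequality gives a global lower bound in one stroke and the nonnegativity constraint never enters. The only thing the Lagrange-multiplier framing buys is familiarity; your argument is shorter, self-contained, and also yields uniqueness of the minimiser for free via the equality case.
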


\begin{proof}
	This is standard. For example, one could use Lagrange multipliers.
\end{proof}

\begin{proof}[Proof of Lemma~\ref{final separation}]
	
	Recall that evaluating $f_{i+1}$ at a point $x$ is given by the sum
	\[
	\frac{\sum_{k=1}^{\left|\mathcal{U}_{i+1}\right|} d(x,X\setminus U_{k})p_{k}}{\sum_{k=1}^{\left|\mathcal{U}_{i+1}\right|} d(x,X\setminus U_{k})},
	\]
	as defined in the construction in~\eqref{fi+1}. It is key to note that the $k$-th term of the sum is non-zero if and only if $x\in U_{k}$. Now suppose $x,y\in X$ such that $d(x,y)> \delta_{i+1}$. As we chose $\mathcal{U}_{i+1}$, via Lemma~\ref{final sizecontrolledcovers}, to have mesh at most $\delta_{i+1}$, $x$ and $y$ can't both lie in any single element of $\mathcal{U}_{i+1}$.
	 Therefore, the non-zero $p_{k}$ components associated to $x$ are disjoint from the non-zero $p_{k}$ components associated to $y$. 
	Further, $f_{i+1}(x)$ will have the form $\sum_{k=1}^{m_{i}} (z_{k}e_{k}) +\sum_{k=1}^{\left|\mathcal{U}_{i+1}\right|}(\lambda_{k}( \epsilon_{i+1}/2) e_{m_{i}+k})$ where $z_{k}\in\mathbb{R}$, and $\sum_{k=1}^{\left|\mathcal{U}_{i+1}\right|}\lambda_{k} = 1$ with $\lambda_{k}\geq 0$ for all $k$ and $\lambda_{k}>0$ for at most $n+1$ distinct $k$. Similarly,
	$f_{i+1}(y)$ will have the form $\sum_{k=1}^{m_{i}} (w_{k}e_{k}) +\sum_{k=1}^{\left|\mathcal{U}_{i+1}\right|}(\mu_{k}( \epsilon_{i+1}/2) e_{m_{i}+k})$ where $w_{k}\in\mathbb{R}$, and $\sum_{k=1}^{\left|\mathcal{U}_{i+1}\right|}\mu_{k} = 1$ with $\mu_{k}\geq 0$ for all $k$ and $\mu_{k}>0$ for at most $n+1$ distinct $k$ each of which is distinct from the set of $k$ such that $\lambda_{k}>0$. This gives us a nice form to the distance between $f_{i+1}(x)$ and $f_{i+1}(y)$, namely
	\begin{align*}
	d(f_{i+1}(x),f_{i+1}(y)) &= \sqrt{\sum_{k=1}^{m_{i}} (z_{k}-w_{k})^{2} + \sum_{k=1}^{\left|\mathcal{U}_{i+1}\right|}\left(\frac{\epsilon_{i+1}\lambda_{k}}{2}\right)^{2}+ \sum_{k=1}^{\left|\mathcal{U}_{i+1}\right|}\left(\frac{\epsilon_{i+1}\mu_{k}}{2}\right)^{2}},\\
	&\geq \frac{\epsilon_{i+1}}{2} \sqrt{\sum_{k=1}^{\left|\mathcal{U}_{i+1}\right|}\lambda_{k}^{2}+ \sum_{k=1}^{\left|\mathcal{U}_{i+1}\right|}\mu_{k}^{2}}.\\
	\end{align*}
	Now by Lemma~\ref{final lagrange multipliers}, bearing in mind the conditions on the $\lambda_{k}$ and $\mu_{k}$ sums, we get
	\[
	d(f_{i+1}(x),f_{i+1}(y)) \geq \frac{\epsilon_{i+1}}{2}\sqrt{\frac{2}{(n+1)}} =\frac{\epsilon_{i+1}}{\sqrt{2(n+1)}}.\qedhere
	\]
\end{proof}
	
	\subsection{$q$-measure}
	In this subsection we show the functions $f_{i}$ have been chosen so that we can easily bound the $q$-measure of their images.
	
In the following we will refer to `simplicies' containing the image of $f_{i+1}$. What we mean by this is that if you consider $x\in X$, there exist $U_{j_{1}},\dots U_{j_{m}} \in\mathcal{U}_{i+1}$, such that $x\in U_{j_{k}}$ for all $k$. Then $f_{i+1}(x)$ sits inside 
	\[
	f_{i+1}\left(\bigcap_{k=1}^{m}U_{j_{k}}\right)\subseteq [p_{j_{1}},\dots,p_{j_{m}}] = \left\{\sum_{k=1}^{m}\lambda_{k}p_{j_{k}}\right\},
	\]
	where $\lambda_{k}\geq 0$ for all $k$ and $\sum_{k=1}^{m} \lambda_{k} = 1$.
	We say that $[p_{j_{1}},\dots,p_{j_{m}}]$ is a \redit{simplex} containing $f_{i+1}(x)$. Due to our construction of $f_{i}$, the image of $f_{i}$ is contained in an $n$-dimensional simplicial complex with simplicies of diameter approximately $\epsilon_{i+1}$. Subspaces in $\mathbb{R}^{n}$ can be covered efficiently at all scales.
	Hence, our approach is to cover each component simplex efficiently using our knowledge of covering $n$-dimensional euclidean space, then union over all simplicies to cover the complex. The upper bound on the number of elements in the cover $\mathcal{U}_{i+1}$ from Lemma~\ref{final sizecontrolledcovers} will give control on the number of simplicies in this simplicial complex.
	
	For each $i\geq 0$ define
	\begin{equation}\label{etai+1}
	\eta_{i+1} \coloneqq 8\epsilon_{i+2}.
	\end{equation}
	This will be the scale within $\ell^{2}$ at which we cover $f_{i+1}(X)$, the $(i+1)$-th approximation of $X$.
	
	\begin{lem}\label{qmeasure}
		For each $i\geq 0$, there exists a cover, $\mathcal{V}_{i+1}$, of $f_{i+1}(X)$ with mesh at most $4\eta_{i+1}$, Lebesgue number at least $\eta_{i+1}$ as a cover of $f_{i+1}(X)\subseteq \ell^{2}$, and
		\[
		\sum_{V\in\mathcal{V}_{i+1}} \diam(V)^{q}\leq 4^{q}.
		\] 
	\end{lem}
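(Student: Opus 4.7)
The key structural observation is that $f_{i+1}(X)$ sits inside a simplicial complex: for each $x \in X$, if $\{U_{j_{1}}, \dots, U_{j_{m}}\} = \{U \in \mathcal{U}_{i+1} \mid x \in U\}$, then formula~\eqref{fi+1} expresses $f_{i+1}(x)$ as a convex combination of $p_{j_{1}}, \dots, p_{j_{m}}$, so $f_{i+1}(x) \in [p_{j_{1}}, \dots, p_{j_{m}}]$. Since $\mathcal{U}_{i+1}$ is $(n+1)$-coloured and therefore has multiplicity at most $n+1$, we have $m \leq n+1$, so each such simplex lies in an affine subspace of $\ell^{2}$ of dimension at most $n$. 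Corollary~\ref{controlledstretching} verifies~\eqref{epsilondeltainductionassumption} for $f_{i}$, so Lemma~\ref{edgelengths} applies and every edge $[p_{U},p_{V}]$ of any such simplex has length at most $2\epsilon_{i+1}$; hence each simplex has diameter at most $2\epsilon_{i+1}$.

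The plan is then to cover each such simplex $\Sigma$ separately and take the union. Fix $\Sigma$ lying in an at-most-$n$-dimensional affine subspace $A_{\Sigma}$ of $\ell^{2}$. Inside $A_{\Sigma}$ take a grid whose spacing is chosen so that every point of $\Sigma$ lies within Euclidean distance $\eta_{i+1}$ of some grid node; a standard volume count shows that at most $(8\sqrt{n}\,\epsilon_{i+1}/\eta_{i+1})^{n}$ nodes suffice. Let $\mathcal{V}_{\Sigma}$ consist of the open $\ell^{2}$-balls of radius $2\eta_{i+1}$ centred at these nodes. The mesh condition is then automatic, since each such ball has diameter $4\eta_{i+1}$. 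For the Lebesgue-number condition, if $y \in \Sigma$ lies within $\eta_{i+1}$ of some grid centre $c$, then the ball $V$ of radius $2\eta_{i+1}$ about $c$ satisfies $d(y,\ell^{2}\setminus V) \geq 2\eta_{i+1} - \eta_{i+1} = \eta_{i+1}$. Setting $\mathcal{V}_{i+1} = \bigcup_{\Sigma}\mathcal{V}_{\Sigma}$ over all such simplices, both properties transfer to $f_{i+1}(X) \subseteq \bigcup_{\Sigma}\Sigma$.

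It remains to verify the $q$-measure bound. I would bound the number of simplices crudely by the number of subfamilies of $\mathcal{U}_{i+1}$ of size at most $n+1$, giving at most $|\mathcal{U}_{i+1}|^{n+1}$. Using $\eta_{i+1} = 8\epsilon_{i+2}$, one obtains
\[
\sum_{V \in \mathcal{V}_{i+1}} \diam(V)^{q} \leq |\mathcal{U}_{i+1}|^{n+1} (8\sqrt{n})^{n} \epsilon_{i+1}^{n} \eta_{i+1}^{-n} (4\eta_{i+1})^{q} = 4^{q}|\mathcal{U}_{i+1}|^{n+1} (8\sqrt{n})^{n} \epsilon_{i+1}^{n} (8\epsilon_{i+2})^{q-n}.
\]
Formula~\eqref{epsiloni+1}, with $i+1$ in place of $i$, gives $(8\epsilon_{i+2})^{q-n} = \bigl((8\sqrt{n})^{n} N^{(n+2)\log_{2}(2/\sigma\delta_{i+1})}\bigr)^{-1}$, which cancels the $(8\sqrt{n})^{n}$ and leaves
\[
\sum_{V \in \mathcal{V}_{i+1}} \diam(V)^{q} \leq 4^{q}\epsilon_{i+1}^{n} |\mathcal{U}_{i+1}|^{n+1} N^{-(n+2)\log_{2}(2/\sigma\delta_{i+1})} \leq 4^{q},
\]
where the final inequality uses $\epsilon_{i+1} \leq 1$ together with $|\mathcal{U}_{i+1}| \leq N^{\log_{2}(2/\sigma\delta_{i+1})}$ from Lemma~\ref{final sizecontrolledcovers} and $n+1 \leq n+2$. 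The main obstacle is exactly this bookkeeping: making the per-simplex covering number, the simplex count, and the recursion~\eqref{epsiloni+1} mesh so that the final constant comes out to $4^{q}$ rather than drifting upwards. The recurrence~\eqref{epsiloni+1} was calibrated precisely for this cancellation, so the work is to confirm the calibration matches the cover I have built.
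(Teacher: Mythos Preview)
Your proposal is correct and follows essentially the same approach as the paper: cover each simplex of the nerve complex by a grid in its affine span, count at most $(8\sqrt{n}\,\epsilon_{i+1}/\eta_{i+1})^{n}$ grid pieces per simplex, multiply by a bound on the number of simplices, and then use the recursion~\eqref{epsiloni+1} for $\epsilon_{i+2}$ to make the sum collapse. The only cosmetic differences are that the paper first covers with cubes of diameter $\eta_{i+1}$ and then enlarges twice to balls of radius $2\eta_{i+1}$ (whereas you go to such balls directly), and the paper bounds the simplex count by $|\mathcal{U}_{i+1}|^{n+2}$ rather than your $|\mathcal{U}_{i+1}|^{n+1}$; since the exponent in~\eqref{epsiloni+1} is $n+2$, either bound suffices.
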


The `Lebesgue number' component on this lemma is present so that $\mathcal{V}_{i+1}$ also covers a small $\ell^{2}$-neighbourhood of the image. Hence, $\mathcal{V}_{i+1}$ will also cover the image of functions similar to $f_{i+1}$. In particular, this will mean that the image of the pointwise limit of $(f_{j})_{j\in\mathbb{N}}$ will be covered by $\mathcal{V}_{i+1}$, and this will give us a useful family of covers for computing the Hausdorff $q$-measure of this limit image.
	
	\begin{proof}[Proof of Lemma~\ref{qmeasure}]
Following our strategy detailed above, we begin by explaining how to cover an individual simplex containing some of the image of $f_{i+1}$.
		 
		For each simplex, $\Delta=[v_{0},\dots,v_{m}]$ with $v_{j}\in \{p_{U}\mid U\in\mathcal{U}_{i+1}\}$ and $m\leq n$, by picking a base vertex of $\Delta$, say $v_{0}$, and letting $S$ be the span of the vectors $(v_{j}-v_{0})$ in $\ell^{2}$, we observe that $\Delta$ sits inside a translated copy of $\mathbb{R}^{r}$, say $(S+v_{0})$, for some $r\leq m \leq n$. Lemma~\ref{edgelengths} tells us that the edge lengths of $\Delta$ are at most $2\epsilon_{i+1}$, hence we see that $\Delta$ is contained in
		an $r$-cube in $(S+v_{0})$ of edge length $4\epsilon_{i+1}$ centred at $v_{0}$. To be precise, identify $S$ with $\mathbb{R}^{r}$, let $R=[-2\epsilon_{i+1},2\epsilon_{i+1}]^{r}\subset S$, and then translate $R$ so that it is centred on $v_{0}$; $(R+v_{0})$ is the aforementioned $r$-cube. Consider the cover of $R$ by subdividing $R$ into $r$-cubes of edge length $\eta_{i+1}/\sqrt{n}$. Note each of these covering $r$-cubes has diameter $\sqrt{r}\eta_{i+1}/\sqrt{n}\leq \eta_{i+1}$. This requires at most $4\sqrt{n}\epsilon_{i+1}/\eta_{i+1} + 1$ subdivisions along each edge of $R$. One may observe that ${4\sqrt{n}\epsilon_{i+1}/\eta_{i+1}\geq 1}$ by combining Lemma~\ref{epsilon decreasing} with the definition of $\eta_{i+1}$ as $8\epsilon_{i+2}$,~\eqref{etai+1}.
		Hence, we can simplify this to at most $8\sqrt{n}\epsilon_{i+1}/\eta_{i+1}$ subdivisions. Thus, to cover the whole $r$-cube $R$ we need at most $(8\sqrt{n}\epsilon_{i+1}/\eta_{i+1})^{r}$ $r$-cubes of edge length $\eta_{i+1}/\sqrt{n}$. As $r\leq n$ and $4\sqrt{n}\epsilon_{i+1}/\eta_{i+1}\geq 1$, we can see that $(8\sqrt{n}\epsilon_{i+1}/\eta_{i+1})^{r}\leq(8\sqrt{n}\epsilon_{i+1}/\eta_{i+1})^{n}$.  Now, if we do this for each simplex in the complex $f_{i+1}(X)$ and take the collection of all these covering hypercubes of diameter at most $\eta_{i+1}$, we get a cover, say $\mathcal{V}_{i+1}$, for $f_{i+1}(X)$.
		
		We picked $\mathcal{U}_{i+1}$ from Lemma~\ref{final sizecontrolledcovers}, so we know that 
		\[
		|\mathcal{U}_{i+1}|\leq N^{\log_{2}(2/\sigma\delta_{i+1})}.
		\]
		We also know that every simplex is a choice of at most $(n+1)$ distinct elements of $\mathcal{U}_{i+1}$, which is at most a choice of $(n+1)$ elements of $\mathcal{U}_{i+1}$ with repetition. Hence, if we define $\Delta_{i+1}$ to be the set of all simplicies, then
		\begin{align}
		|\Delta_{i+1}|&\leq \sum_{m=0}^{n}\left|\mathcal{U}_{i+1}\right|^{m+1},\nonumber\\
		&\leq \left|\mathcal{U}_{i+1}\right|^{n+2},\nonumber\\
		&\leq N^{(n+2)\log_{2}(2/\sigma\delta_{i+1})}.\label{Deltai+1}
		\end{align}
		We can, therefore, estimate the $q$-measure of the image by
		\[
		\sum_{V\in\mathcal{V}_{i+1}} \diam (V)^{q} \leq |\Delta_{i+1}| (8\sqrt{n}\epsilon_{i+1}/\eta_{i+1})^{n} {\eta_{i+1}}^{q},
		\]
		with the cover described above.
		
		Noting that $\epsilon_{i+1}\leq 1$, by Lemma~\ref{epsilon decreasing} and $\epsilon_{0}=1$, we can again bound by
		\begin{equation}\label{equationpartialmeasurecalculation}
		\sum_{V\in\mathcal{V}_{i+1}} \diam (V)^{q} \leq |\Delta_{i+1}| (8\sqrt{n})^{n} \eta_{i+1}^{q-n}.
		\end{equation}
		Plugging the definition of $\epsilon_{i+2}$,~\eqref{epsiloni+1}, into the definition of $\eta_{i+1}$,~\eqref{etai+1}, and combining with equations~\eqref{Deltai+1} and~\eqref{equationpartialmeasurecalculation} we obtain 
		\[
		\sum_{V\in\mathcal{V}_{i+1}} \diam (V)^{q} \leq 1.
		\]
		Now we have a cover of the image of $f_{i+1}$, but we are lacking control on the Lebesgue number of $\mathcal{V}_{i+1}$ to ensure that $\mathcal{V}_{i+1}$ also covers a small $\ell^{2}$-neighbourhood of the image. For each $V\in\mathcal{V}_{i+1}$, pick a point $y_{V}\in V$, then consider $V\subset\overbar{B(y_{V},\eta_{i+1})}$ and $\diam\left(\overbar{B(y_{V},\eta_{i+1})}\right) \leq  2\diam (V)$ so we can replace $V$ by $\overbar{B(y_{V},\eta_{i+1})}$ and still cover $f_{i+1}(X)$ without significantly affecting the value of the above sum. We can also double the radius of $\overbar{B(y_{V},\eta_{i+1})}$ without changing the value of the above sum by much, so if we consider a new cover $\mathcal{V}_{i+1}^{\prime} = \{\overbar{B(y_{V},2\eta_{i+1})}\mid V\in\mathcal{V}\}$. Then
		\[
		\sum_{V^{\prime}\in\mathcal{V}_{i+1}^{\prime}} \diam (V^{\prime})^{q} \leq \sum_{V\in\mathcal{V}_{i+1}} (4\diam (V))^{q} \leq 4^{q}.
		\]
		Now, we proceed to show that we have good control of the Lebesgue number of this modified cover. Indeed, for any $y\in f_{i+1}(X)$, $\mathcal{V}$ is a cover of $f_{i+1}(X)$, so there exists some $V\in\mathcal{V}$ such that $y\in V\subset \overbar{B(y_{V},\eta_{i+1})}$ and thus $B(y,\eta_{i+1})\subset\overbar{B(y_{V},2\eta_{i+1})}\in\mathcal{V}_{i+1}^{\prime}$, so $\mathcal{L}(\mathcal{V}_{i+1}^{\prime})\geq \eta_{i+1}$. Therefore, as the mesh of $\mathcal{V}_{i+1}^{\prime}$ is at most $4\eta_{i+1}$, we see that $\mathcal{V}_{i+1}^{\prime}$ satisfies the requirements for our lemma.
	\end{proof}

	\subsection{Cauchy}
	In this section, we justify that we have good control on how close each map in the sequence of maps $(f_{j})_{j\in\mathbb{N}}$ is to the previous map.
	\begin{lem} \label{Cauchy}
		$d(f_{i+1},f_{i})\leq \epsilon_{i+1}$
	\end{lem}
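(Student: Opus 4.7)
The plan is to read $d(f_{i+1},f_i)$ as the uniform distance $\sup_{x\in X}|f_{i+1}(x)-f_i(x)|_{\ell^2}$ and to exploit the convex-combination structure of formula \eqref{fi+1}. Writing $\lambda_k(x):=d(x,X\setminus U_k)/\sum_\ell d(x,X\setminus U_\ell)$, so that $\lambda_k(x)\geq 0$ and $\sum_k\lambda_k(x)=1$, and substituting the definition $p_k=f_i(x_k)+(\epsilon_{i+1}/2)e_{m_i+k}$ from \eqref{pU}, we obtain
\[
f_{i+1}(x)-f_i(x)=\sum_k \lambda_k(x)\bigl(f_i(x_k)-f_i(x)\bigr)+\frac{\epsilon_{i+1}}{2}\sum_k\lambda_k(x)\,e_{m_i+k}.
\]
It then suffices to bound each of the two terms on the right by $\epsilon_{i+1}/2$ in $\ell^2$-norm and apply the triangle inequality.

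For the first term, note that $\lambda_k(x)>0$ only when $x\in U_k$, in which case $d(x,x_k)\leq \operatorname{mesh}(\mathcal{U}_{i+1})\leq \delta_{i+1}$. Corollary~\ref{controlledstretching} then gives $|f_i(x_k)-f_i(x)|\leq \epsilon_{i+1}/2$ for every contributing index $k$. Since a convex combination of vectors of norm at most $\epsilon_{i+1}/2$ still has norm at most $\epsilon_{i+1}/2$ (by the triangle inequality), the first term is controlled.

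For the second term, the vectors $e_{m_i+k}$ occurring in the sum are mutually orthogonal unit vectors in $\ell^2$, so its norm is exactly $(\epsilon_{i+1}/2)\sqrt{\sum_k\lambda_k(x)^2}$. Since $0\leq \lambda_k(x)\leq 1$ and $\sum_k\lambda_k(x)=1$, we have $\sum_k\lambda_k(x)^2\leq \sum_k\lambda_k(x)=1$, so this term is likewise bounded by $\epsilon_{i+1}/2$. Adding the two bounds gives $|f_{i+1}(x)-f_i(x)|\leq \epsilon_{i+1}$ for every $x\in X$, which is the claim.

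There is no real obstacle here: the estimate is almost a tautological consequence of the affine construction of $f_{i+1}$ once one splits off the new-coordinate contribution. The only point to keep in mind is that Corollary~\ref{controlledstretching} must be applied at the correct index $i$ (to $f_i$ with the scale $\delta_{i+1}$), which is exactly the level of control guaranteed by Proposition~\ref{lemlocallylipschitz} in the inductive construction.
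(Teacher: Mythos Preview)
Your proof is correct and follows essentially the same approach as the paper: both exploit that $f_{i+1}(x)-f_i(x)$ is a convex combination of the vectors $p_k-f_i(x)$ for the indices $k$ with $x\in U_k$, and both invoke Corollary~\ref{controlledstretching} to control $|f_i(x_k)-f_i(x)|$. The only cosmetic difference is that the paper bounds each $|p_k-f_i(x)|\leq \epsilon_{i+1}/2+\epsilon_{i+1}/2=\epsilon_{i+1}$ first and then takes the convex combination, whereas you split $p_k-f_i(x)$ into its ``old-coordinate'' and ``new-coordinate'' parts and bound the two convex combinations separately (using orthogonality for the latter, which is a slightly sharper but ultimately unnecessary observation).
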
 
The following proof is essentially from~\cite[page 59]{hurewicz2015dimension}.
\begin{proof}
	For $x\in X$, let $U_{j_{1}},\dots,U_{j_{m}}$ be the collection of elements of $\mathcal{U}_{i+1}$ that contain $x$. Recall from Definition~\eqref{pU} that, for each $1\leq k\leq m$, $U_{j_{k}}$ is associated to a point, $x_{j_{k}}$, contained within. As both $x$ and $x_{j_{k}}$ are contained in $U_{j_{k}}$, and $\diam(U_{j_{k}})\leq \delta_{i+1}$, we see that $d(x,x_{j_{k}})\leq \delta_{i+1}$. Further, if we apply Corollary~\ref{controlledstretching}, we deduce $d(f_{i}(x),f_{i}(x_{j_{k}}))\leq \epsilon_{i+1}/2$. By our choice of $p_{j_{k}}$, we know $d(f_{i}(x_{j_{k}}),p_{j_{k}})= \epsilon_{i+1}/2$, and therefore $|p_{j_{k}}-f_{i}(x)| = d(f_{i}(x),p_{j_{k}})\leq \epsilon_{i+1}$ follows by application of the triangle inequality. Hence,
	\begin{align*}
	d(f_{i+1}(x),f_{i}(x))&= \left|\left(\frac{\sum_{k=1}^{\left|\mathcal{U}_{i+1}\right|} d(x,X\setminus U_{k})p_{k}}{\sum_{k=1}^{\left|\mathcal{U}_{i+1}\right|} d(x,X\setminus U_{k})}\right)-f_{i}(x)\right|\\
			  &= \left|\frac{\sum_{k=1}^{m}d(x,X\setminus U_{j_{k}})(p_{j_{k}}-f_{i}(x))}{\sum_{k=1}^{m}d(x,X\setminus U_{j_{k}})}\right|\\
			  &\leq \frac{\sum_{k=1}^{m}d(x,X\setminus U_{j_{k}})|p_{j_{k}}-f_{i}(x)|}{\sum_{k=1}^{m}d(x,X\setminus U_{j_{k}})}\\
			  &\leq \frac{\sum_{k=1}^{m}d(x,X\setminus U_{j_{k}})\epsilon_{i+1}}{\sum_{k=1}^{m}d(x,X\setminus U_{j_{k}})}\\
			  &=\epsilon_{i+1}.\qedhere
	\end{align*}
\end{proof}

Along with our knowledge of the sequence $(\epsilon_{j})_{j\in\mathbb{N}}$ from Lemma~\ref{epsilon decreasing}, we see that the sequence $(f_{j})_{j\in\mathbb{N}}$ is Cauchy.

\section{The bi-H\"older embedding 
}
In this section we define the map $f$ mentioned in our main theorem, Theorem~\ref{thmmain}, and verify it has the desired properties for the theorem.

	\subsection{The definition and basic properties of $f$}
In this subsection we define the map $f$ as the pointwise limit of the sequence $(f_{j})_{j\in \mathbb{N}}$, prove its existence, and show how nice properties of $f_{i}$ translate over to $f$.

\begin{lem}\label{final dist to pointwise limit}
	The pointwise limit, $f$, of $\{f_{j}\}_{j\in\mathbb{N}}$ exists, and, for $i\geq 1$, $d(f,f_{i-1})\leq 2\epsilon_{i}$.
\end{lem}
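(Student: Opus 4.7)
The plan is to combine the pointwise bound from Lemma~\ref{Cauchy} with the geometric decay of $(\epsilon_j)_{j\in\mathbb{N}}$ from Lemma~\ref{epsilon decreasing}, and then invoke completeness of $\ell^2$.

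First, fix any $x\in X$. For integers $m>k\geq 0$, the triangle inequality together with Lemma~\ref{Cauchy} (applied pointwise at $x$) gives
\[
|f_m(x)-f_k(x)|\leq \sum_{j=k}^{m-1} |f_{j+1}(x)-f_j(x)|\leq \sum_{j=k}^{m-1}\epsilon_{j+1}.
\]
By Lemma~\ref{epsilon decreasing} (with the explicit bound $\epsilon_{j+1}/\epsilon_j\leq 1/2$ recorded after it), we have $\epsilon_{j+1}\leq (1/2)^{j-k}\epsilon_{k+1}$ for all $j\geq k$, so the tail sum is dominated by the geometric series
\[
\sum_{j=k}^{\infty}\epsilon_{j+1}\leq \epsilon_{k+1}\sum_{\ell=0}^{\infty}(1/2)^{\ell}= 2\epsilon_{k+1}.
\]
In particular, $(f_j(x))_{j\in\mathbb{N}}$ is a Cauchy sequence in $\ell^2$, and since $\ell^2$ is complete, the pointwise limit $f(x)\coloneqq \lim_{j\to\infty}f_j(x)\in\ell^2$ exists. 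Doing this for every $x\in X$ defines the map $f\colon X\to\ell^2$.

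Finally, to obtain the quantitative bound, fix $i\geq 1$, set $k=i-1$, and let $m\to\infty$ in the inequality above. Since distance in $\ell^2$ is continuous, this yields
\[
|f(x)-f_{i-1}(x)|\leq \sum_{j=i-1}^{\infty}\epsilon_{j+1}\leq 2\epsilon_{i}.
\]
This holds uniformly in $x\in X$, so taking the supremum gives $d(f,f_{i-1})\leq 2\epsilon_i$, as required. There is no substantive obstacle here: the whole argument is a standard telescoping/completeness argument, and the only input beyond the two earlier lemmas is the geometric summation.
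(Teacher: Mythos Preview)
Your proof is correct and follows essentially the same approach as the paper's own proof: both telescope using Lemma~\ref{Cauchy}, bound the tail by a geometric series via Lemma~\ref{epsilon decreasing} (using $\epsilon_{j+1}\leq \tfrac12\epsilon_j$), invoke completeness of $\ell^2$, and pass to the limit to obtain the $2\epsilon_i$ bound. The only cosmetic difference is your indexing of the sum, which is arguably cleaner.
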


\begin{proof}
	Recall, Lemma~\ref{Cauchy} gives
	\[
	d(f_{j},f_{j+1})\leq \epsilon_{j+1},\text{ for all }j,
	\]
	and Lemma~\ref{epsilon decreasing} gives
	\[
	\epsilon_{j+1}\leq \frac{1}{L}\epsilon_{j}, \text{ for all }j,
	\]
	and $L\geq 2$. If we combine these with the triangle inequality, we may observe that $(f_{j}(x))_{j\in\mathbb{N}}$ is a Cauchy sequence in $\ell^{2}$ for every $x\in X$ and therefore, by the completeness of $\ell^{2}$, has a limit. Define $f\colon X\rightarrow \ell^{2}$ by $f(x) = \lim\limits_{j\to \infty} f_{j}(x)$ for $x\in X$.
	Now for each $x\in X$ and $i\geq 1$, $d(f(x),f_{i-1}(x)) = \lim\limits_{j\to\infty}d(f_{j}(x),f_{i-1}(x))$, but, for $j\geq i$, by repeated use of the triangle inequality and noting that $\epsilon_{j+1}\leq \epsilon_{j}/L\leq \epsilon_{j}/2$, for all $j\geq 0$, we get
	\[
	d(f_{i-1}(x),f_{j}(x))\leq \sum_{k=i}^{j} d(f_{k-1}(x),f_{k}(x)) \leq \sum_{k=i}^{j} \epsilon_{k}\leq \sum_{k=0}^{j-i}\frac{\epsilon_{i}}{2^{k}} = \epsilon_{i}\left( 2 - \frac{1}{2^{j-i}}\right).
	\]
	Therefore, taking $j\to\infty$, we see that $d(f,f_{i-1})\leq 2\epsilon_{i}$.
\end{proof}

We see in the following two lemmas that the control imposed on the approximating functions, $f_{i}$, roughly follows through to the limit.
\begin{lem}\label{final above}
	For all $i\geq 0$,
	\[
	d(x,y)\leq \delta_{i} \implies d(f(x),f(y))\leq \frac{9}{2}\epsilon_{i}.
	\]
\end{lem}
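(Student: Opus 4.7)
The plan is to prove this by a straightforward triangle inequality argument, bridging $f$ through one of its approximants $f_{i-1}$, for which both a uniform-continuity estimate and a uniform distance-to-limit bound are already in hand.

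For the main case $i \geq 1$, I would invoke Corollary~\ref{controlledstretching} with index shifted by one, i.e.\ applied to $f_{i-1}$: it yields that $d(x,y) \leq \delta_i$ implies $d(f_{i-1}(x), f_{i-1}(y)) \leq \epsilon_i/2$. Combined with Lemma~\ref{final dist to pointwise limit}, which gives $d(f(x), f_{i-1}(x)) \leq 2\epsilon_i$ and similarly for $y$, the triangle inequality
\[
d(f(x),f(y)) \leq d(f(x),f_{i-1}(x)) + d(f_{i-1}(x),f_{i-1}(y)) + d(f_{i-1}(y),f(y))
\]
produces the bound $2\epsilon_i + \epsilon_i/2 + 2\epsilon_i = \tfrac{9}{2}\epsilon_i$, which is exactly what we want.

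For the base case $i=0$, the hypothesis $d(x,y) \leq \delta_0 = 1$ is automatic since $\diam(X)=1$, so we just need a global bound on $d(f(x),f(y))$. Here $f_0$ is the constant zero map, so by Lemma~\ref{final dist to pointwise limit} applied at $i=1$ we get $d(f(x),f(y)) \leq d(f(x),f_0(x)) + 0 + d(f_0(y),f(y)) \leq 4\epsilon_1$, and by Lemma~\ref{epsilon decreasing} this is at most $4\epsilon_0/L$, which is comfortably smaller than $\tfrac{9}{2}\epsilon_0$ (since $L \geq 2$, indeed much larger). So the base case is essentially trivial and the statement holds.

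I do not anticipate any real obstacle: the numerology $2 + 1/2 + 2 = 9/2$ is exactly what forces the constant in the conclusion, and all three ingredients are already established. The only thing to be slightly careful about is the index bookkeeping --- applying Corollary~\ref{controlledstretching} with $i$ replaced by $i-1$ so that the hypothesis involves $\delta_i$ and the conclusion involves $\epsilon_i$ --- and separating the $i=0$ case, where $f_{i-1}$ does not exist, from the inductive-style triangle-inequality argument used for $i \geq 1$.
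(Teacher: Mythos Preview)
Your proposal is correct and follows essentially the same approach as the paper: for $i\geq 1$ the paper also bridges through $f_{i-1}$ via the triangle inequality, combining Corollary~\ref{controlledstretching} (shifted by one) with Lemma~\ref{final dist to pointwise limit} to get exactly $2\epsilon_i + \tfrac{1}{2}\epsilon_i + 2\epsilon_i = \tfrac{9}{2}\epsilon_i$, and handles $i=0$ separately using that $f_0$ is constant together with $d(f,f_0)\leq 2\epsilon_1$ and Lemma~\ref{epsilon decreasing}.
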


\begin{proof}

	When $i\geq 1$, using the triangle inequality combined with $d(f,f_{i-1})\leq 2\epsilon_{i}$ from Lemma~\ref{final dist to pointwise limit}, and
	\[
	d(x,y)\leq \delta_{i} \implies d(f_{i-1}(x),f_{i-1}(y))\leq \frac{1}{2}\epsilon_{i},
	\]
	from Corollary~\ref{controlledstretching}, we get the desired
	\[
	d(x,y)\leq \delta_{i} \implies d(f(x),f(y))\leq \frac{9}{2}\epsilon_{i}.
	\]
	
	For $i=0$ we similarly use the triangle inequality and $d(f,f_{0})\leq 2\epsilon_{1}\leq \epsilon_{0}$, but instead of using Corollary~\ref{controlledstretching} we use the definition of $f_{0}$ to get $d(f_{0}(x),f_{0}(y)) = 0$ for all $x,y\in X$.
\end{proof}

\begin{lem}\label{final below}
	For all $i\geq 0$,
	\[
	 \delta_{i+1}<d(x,y) \implies  \frac{1}{2\sqrt{2(n+1)}}\epsilon_{i+1}\leq d(f(x),f(y)).
	\]
\end{lem}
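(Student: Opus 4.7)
The plan is to combine the separation estimate for the approximating map $f_{i+1}$ with the control on how far $f$ lies from $f_{i+1}$, via the triangle inequality. By Lemma~\ref{final separation}, the hypothesis $\delta_{i+1} < d(x,y)$ already gives
\[
d(f_{i+1}(x), f_{i+1}(y)) \geq \frac{\epsilon_{i+1}}{\sqrt{2(n+1)}}.
\]
Meanwhile, Lemma~\ref{final dist to pointwise limit} (applied with index $i+2$ in place of $i$) yields $d(f, f_{i+1}) \leq 2\epsilon_{i+2}$ pointwise, so applied at both $x$ and $y$ we lose at most $4\epsilon_{i+2}$ when passing from $f_{i+1}$ to $f$.

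So the first step is to write down the triangle inequality
\[
d(f(x), f(y)) \geq d(f_{i+1}(x), f_{i+1}(y)) - d(f(x), f_{i+1}(x)) - d(f(y), f_{i+1}(y)),
\]
and substitute the two estimates above to obtain
\[
d(f(x), f(y)) \geq \frac{\epsilon_{i+1}}{\sqrt{2(n+1)}} - 4\epsilon_{i+2}.
\]
The second step is to absorb the error term into half the main term, i.e., to verify that $4\epsilon_{i+2} \leq \frac{\epsilon_{i+1}}{2\sqrt{2(n+1)}}$, equivalently $\frac{\epsilon_{i+2}}{\epsilon_{i+1}} \leq \frac{1}{8\sqrt{2(n+1)}}$. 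This is exactly the strengthened form of Lemma~\ref{epsilon decreasing} noted immediately after its statement (shifted by one index), which follows from $L = 128(n+1)^{2}/\sigma^{2}$.

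There is essentially no obstacle here; the whole lemma is just a triangle-inequality passage to the limit, and the only content is that the constants in the definition of $L$ were chosen large enough to make the geometric decay of $(\epsilon_j)$ dominate the error $4\epsilon_{i+2}$ coming from Lemma~\ref{final dist to pointwise limit}. Combining the two displayed inequalities yields $d(f(x), f(y)) \geq \frac{1}{2\sqrt{2(n+1)}}\epsilon_{i+1}$, which is the claim.
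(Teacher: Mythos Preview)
Your proof is correct and follows essentially the same approach as the paper: apply Lemma~\ref{final separation} for the separation at level $i+1$, use Lemma~\ref{final dist to pointwise limit} to bound $d(f,f_{i+1})\leq 2\epsilon_{i+2}$, and absorb the $4\epsilon_{i+2}$ error via the consequence $\epsilon_{i+2}/\epsilon_{i+1}\leq 1/(8\sqrt{2(n+1)})$ of Lemma~\ref{epsilon decreasing}. The paper's argument is identical in structure and constants.
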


\begin{proof}
	By Lemma~\ref{final separation}, we have 
	\[
	 \delta_{i+1}<d(x,y) \implies  \frac{1}{\sqrt{2(n+1)}}\epsilon_{i+1}\leq d(f_{i+1}(x),f_{i+1}(y)).
	\]
	From Lemma~\ref{final dist to pointwise limit}, we have $d(f,f_{i+1})\leq 2\epsilon_{i+2}$. From Lemma~\ref{epsilon decreasing} we have $\epsilon_{i+2}\leq  \epsilon_{i+1}/L$, and note that $L\geq (8\sqrt{2(n+1)})$, so
	\[
	\epsilon_{i+2}\leq \frac{1}{L} \epsilon_{i+1}\leq \frac{1}{8\sqrt{2(n+1)}}\epsilon_{i+1}.
	\]
	Combining these facts and using the triangle inequality, we get, for any $x,y\in X$ such that $\delta_{i+1}<d(x,y)$,
	\[
	 \frac{1}{2\sqrt{2(n+1)}}\epsilon_{i+1}=\frac{1}{\sqrt{2(n+1)}}\epsilon_{i+1}-2(2\frac{1}{8\sqrt{2(n+1)}}\epsilon_{i+1})\leq d(f(x),f(y)).\qedhere
	\]
\end{proof}

\subsection{The Hausdorff dimension of $f(X)$ is at most $q$}

\begin{prop}\label{propqmeasureatmost4q}
	The image, $f(X)$, of $f$ has Hausdorff dimension at most $q$, in particular $f(X)$ has $q$-Hausdorff measure at most $4^{q}$. 
\end{prop}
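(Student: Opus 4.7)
The plan is to use the covers $\mathcal{V}_{i+1}$ constructed in Lemma~\ref{qmeasure} together with the distance bound from Lemma~\ref{final dist to pointwise limit}. The key observation is that although Lemma~\ref{qmeasure} gives covers of $f_{i+1}(X)$, not $f(X)$, the Lebesgue-number component of that lemma was deliberately built in precisely so that each $\mathcal{V}_{i+1}$ covers a small $\ell^{2}$-neighbourhood of $f_{i+1}(X)$, and $f(X)$ lies in such a neighbourhood.

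First I would show that $\mathcal{V}_{i+1}$ covers $f(X)$ for every $i\geq 0$. Fix $x\in X$. By Lemma~\ref{final dist to pointwise limit} we have $d(f(x),f_{i+1}(x))\leq 2\epsilon_{i+2}$, and by definition $\eta_{i+1}=8\epsilon_{i+2}$, so $f(x)$ lies within distance $\eta_{i+1}/4 < \eta_{i+1}$ of the point $f_{i+1}(x)\in f_{i+1}(X)$. Since $\mathcal{L}(\mathcal{V}_{i+1})\geq \eta_{i+1}$, the ball $B(f_{i+1}(x),\eta_{i+1})$ is contained in some $V\in\mathcal{V}_{i+1}$, and hence $f(x)\in V$. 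Thus $\mathcal{V}_{i+1}$ is a cover of $f(X)$.

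Next I would read off the Hausdorff-measure estimate. Lemma~\ref{qmeasure} tells us $\operatorname{mesh}(\mathcal{V}_{i+1})\leq 4\eta_{i+1}=32\epsilon_{i+2}$ and $\sum_{V\in\mathcal{V}_{i+1}}\diam(V)^{q}\leq 4^{q}$. By Lemma~\ref{epsilon decreasing}, $\epsilon_{i+2}\to 0$ as $i\to\infty$, so the mesh of $\mathcal{V}_{i+1}$ tends to zero. Therefore, for every $\rho>0$, choosing $i$ large enough that $32\epsilon_{i+2}\leq \rho$ gives a $\rho$-cover of $f(X)$ witnessing $\mathcal{H}^{q}_{\rho}(f(X))\leq 4^{q}$. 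Letting $\rho\to 0$ yields $\mathcal{H}^{q}(f(X))\leq 4^{q}$, which in turn implies $\dim_{H}(f(X))\leq q$.

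There is no real obstacle here beyond bookkeeping; the only subtle point is the compatibility of scales between the limiting error $2\epsilon_{i+2}$ from Lemma~\ref{final dist to pointwise limit} and the Lebesgue number $\eta_{i+1}=8\epsilon_{i+2}$ from Lemma~\ref{qmeasure}, which was arranged so that the former is comfortably smaller than the latter. The rest is the standard passage from a shrinking sequence of $\rho$-covers with uniformly bounded $q$-sum to a Hausdorff-measure bound.
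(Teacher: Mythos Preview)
Your proposal is correct and follows essentially the same approach as the paper's own proof: both use the Lebesgue-number part of Lemma~\ref{qmeasure} together with the bound $d(f,f_{i+1})\leq 2\epsilon_{i+2}=\eta_{i+1}/4$ from Lemma~\ref{final dist to pointwise limit} to show that each $\mathcal{V}_{i+1}$ already covers $f(X)$, then let the mesh tend to zero. The only difference is cosmetic indexing (the paper writes $\mathcal{V}_i$ for $i\geq 1$ where you write $\mathcal{V}_{i+1}$ for $i\geq 0$).
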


\begin{proof}
	By Lemma~\ref{qmeasure}, for every $i\geq 1$, we have a cover, $\mathcal{V}_{i}$, of the image $f_{i}(X)$, with mesh at most $4\eta_{i}$ and  Lebesgue number, as a cover of $f_{i}(X)\subseteq \ell^{2}$, at least $\eta_{i}$ such that
	\[
	\sum_{V\in\mathcal{V}_{i}} \diam(V)^{q}\leq \sum_{V\in\mathcal{V}_{i}} (4\eta_{i})^{q}\leq 4^{q}.
	\]
Plugging the definition of $\eta_{i+1}$, given in~\eqref{etai+1}, into the result of Lemma~\ref{final dist to pointwise limit} we find $d(f,f_{i})\leq \eta_{i}/4$.
Hence, for any $x\in X$, $f(x)$ lies in the ball $B(f_{i}(x),\eta_{i}/2)$ which is contained in an element of $\mathcal{V}_{i}$ by the Lebesgue number component of Lemma~\ref{qmeasure}. Therefore, $\mathcal{V}_{i}$ is also a cover for $f(X)$.
	
	Noting that $\eta_{i}\to 0$, because $\epsilon_{i}\to 0$ by Lemma~\ref{epsilon decreasing}, we see that the Hausdorff $q$-measure of $f(X)$ is at most $4^{q}$, which is finite so the Hausdorff dimension of $f(X)$ is at most $q$.
\end{proof}

\subsection{The map $f$ is bi-H\"older onto its image}

\begin{prop}\label{biHolder}
	The map $f$ is bi-H\"older onto its image, in particular there exists $Q=Q(n,q,N)$ and $\lambda = \lambda(n,q,N,\sigma)$ such that, for any $x,y\in X$,
	\begin{equation}\label{equationbiholderprop}
	\frac{1}{\lambda}d(x,y)^{2Q}\leq d(f(x),f(y))\leq \lambda d(x,y)^{\frac{1}{4Q}}.
	\end{equation}
	
\end{prop}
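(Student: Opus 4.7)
Given $x, y \in X$ with $x \neq y$, since $d(x, y) \leq \diam(X) = 1 = \delta_0$ and the sequence $(\delta_i)_{i \in \mathbb{N}}$ tends to zero (by Lemma~\ref{final epsilondeltarelation} combined with Lemma~\ref{epsilon decreasing}), there is a unique $i = i(x, y) \geq 0$ with $\delta_{i+1} < d(x, y) \leq \delta_i$. Applying Lemma~\ref{final above} to the upper end and Lemma~\ref{final below} to the lower end gives
$$\frac{\epsilon_{i+1}}{2\sqrt{2(n+1)}} \leq d(f(x), f(y)) \leq \frac{9}{2}\epsilon_i,$$
so the task reduces to a uniform power-law comparison between $\epsilon_i$ and $\delta_i$.

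I would set $2Q := \alpha = \frac{n+2}{q-n}\log_2 N$, the exponent appearing in the closed-form recurrence~\eqref{explicitepsiloni+1}. The crucial identity, immediate from the definition~\eqref{defnB3} of $B_3$, is $B_3 \log_L N = \alpha$, and hence $L^{2Q} = N^{B_3}$. For the lower H\"older bound, a direct computation using $\delta_i = L^{-i}\epsilon_i$ (Lemma~\ref{final epsilondeltarelation}) and~\eqref{explicitepsiloni+1} gives
$$\frac{\delta_i^{2Q}}{\epsilon_{i+1}} = 8B_1 N^{B_2} \cdot \left(\frac{N^{B_3}}{L^{2Q}}\right)^i \cdot \epsilon_i^{2Q - \alpha} = 8B_1 N^{B_2},$$
a constant depending on $n, q, N, \sigma$. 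Since $d(x,y) \leq \delta_i \leq 1$, this yields $d(x,y)^{2Q} \leq \delta_i^{2Q} \leq 8B_1 N^{B_2}\epsilon_{i+1}$, which combined with the lower bound above gives $d(f(x), f(y)) \geq d(x,y)^{2Q}/(16 B_1 N^{B_2}\sqrt{2(n+1)})$.

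For the upper H\"older bound with exponent $\beta := 1/(4Q) = 1/(2\alpha)$, the analogous computation (using $N^{B_3/(2\alpha)} = L^{1/2}$, obtained from the same identity) yields
$$\frac{\epsilon_i}{\delta_{i+1}^\beta} = C \cdot \epsilon_i^{1/2} \cdot L^{i(\alpha+1)/(2\alpha)}$$
for an explicit $C = C(n, q, N, \sigma)$. Unlike the lower bound this does not simplify to a constant, and establishing a uniform bound in $i$ is the main obstacle. I would exploit the doubly exponential decay of $\epsilon_i$: setting $v_i := \log_L \epsilon_i$, the recurrence~\eqref{explicitepsiloni+1} becomes the first-order linear recurrence $v_{i+1} = \alpha v_i + \log_L(1/(8B_1 N^{B_2})) - i\alpha$, whose explicit solution has the form $v_i = q\alpha^i - pi - q$ for a negative constant $q$ and another constant $p$, both depending on $n, q, N, \sigma$. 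Hence the quantity $(1/2)v_i + i(\alpha+1)/(2\alpha)$ is dominated for large $i$ by the doubly-exponentially-negative term $q\alpha^i/2$, and is bounded for small $i$ by direct inspection (taking $N$ large via Note~\ref{Nbig} if needed). This yields a uniform bound $\lambda'$ on $\epsilon_i/\delta_{i+1}^\beta$, giving $d(f(x), f(y)) \leq (9/2)\epsilon_i \leq (9\lambda'/2)\delta_{i+1}^\beta < (9\lambda'/2)\, d(x,y)^{1/(4Q)}$. Taking $\lambda$ to be the maximum of the resulting H\"older constants completes the proof. The principal difficulty lies in this last step: verifying that the doubly exponential decay of $\epsilon_i$ dominates the exponential-in-$i$ factor $L^{i(\alpha+1)/(2\alpha)}$ \emph{uniformly} for all $i \geq 0$, not merely asymptotically.
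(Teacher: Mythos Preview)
Your overall strategy is correct and matches the paper's: locate $i$ with $\delta_{i+1}<d(x,y)\le\delta_i$, apply Lemmas~\ref{final above} and~\ref{final below}, then compare $\epsilon_i,\epsilon_{i+1}$ with $\delta_i,\delta_{i+1}$ by a power law. Your lower-bound argument is in fact tighter than the paper's: you use the exact identity $\delta_i^{\alpha}=8B_1N^{B_2}\,\epsilon_{i+1}$ (which is just~\eqref{simpleepsiloni+1}) together with $d(x,y)\le\delta_i$, whereas the paper passes through the cruder bound $d(x,y)\le\delta_i\le\epsilon_i$ and Lemma~\ref{epsilondontshrinktoofast}. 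This is why your choice $2Q=\alpha$ works while the paper needs $Q=\alpha$, i.e.\ $2Q=2\alpha$.

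The real divergence is in the upper bound. You compute $\epsilon_i/\delta_{i+1}^{1/(2\alpha)}$ directly and then have to argue it is bounded uniformly in $i$ by solving the linear recurrence for $v_i=\log_L\epsilon_i$ and observing that the closed-form solution is $v_i=c\alpha^i+\tfrac{\alpha}{\alpha-1}i-c$ with $c<0$, so that the $c\alpha^i$ term eventually dominates any linear growth. This is correct (the sign $c<0$ follows from $v_0=0$ once one writes out the particular solution carefully), and the maximum over $i\ge0$ of $\tfrac12 v_i+\tfrac{(\alpha+1)}{2\alpha}i$ is indeed a finite constant depending only on $n,q,N,\sigma$; the appeal to Note~\ref{Nbig} is unnecessary here. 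The paper avoids this recurrence analysis entirely by chaining two one-line inequalities: $\epsilon_{i+1}^2\le L^{-(i+1)}\epsilon_{i+1}=\delta_{i+1}$ (Lemma~\ref{lessL}) and $\epsilon_i\le(C\epsilon_{i+1})^{1/(2\alpha)}$ (Lemma~\ref{epsilondontshrinktoofast}), which combine immediately to $\epsilon_i\le C^{1/(2\alpha)}\delta_{i+1}^{1/(4\alpha)}$. The price of that simplicity is a weaker upper exponent $1/(4\alpha)$ where you obtain $1/(2\alpha)$; your route is more work but yields sharper bi-H\"older exponents on both sides.
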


\begin{lem}\label{lessL}
	For all $i\geq 0$,
	\[
	\epsilon_{i}\leq \frac{1}{L^{i}}.
	\]
\end{lem}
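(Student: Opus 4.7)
The plan is to prove this by a direct induction on $i$, using the two ingredients already established: the initialisation $\epsilon_0 = 1$ (fixed at the start of Subsection~\ref{subsectionapproximatingXbynicecoversatcontrolledscales}) and the ratio bound $\epsilon_{i+1}/\epsilon_i \leq 1/L$ supplied by Lemma~\ref{epsilon decreasing}.

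For the base case $i=0$, note $\epsilon_0 = 1 = 1/L^0$, giving equality. For the inductive step, assume $\epsilon_i \leq 1/L^i$. Then by Lemma~\ref{epsilon decreasing},
\[
\epsilon_{i+1} \leq \frac{1}{L}\epsilon_i \leq \frac{1}{L}\cdot\frac{1}{L^i} = \frac{1}{L^{i+1}},
\]
which completes the induction.

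There is no real obstacle here; the statement is a routine consequence of the multiplicative decay already proved, and is stated as a separate lemma only because it will be convenient to have a clean geometric bound $\epsilon_i \leq L^{-i}$ (rather than the ratio form) when comparing powers of $d(x,y)$ to the scales $\epsilon_i, \delta_i$ in the ensuing bi-H\"older estimate.
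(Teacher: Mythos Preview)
Your proof is correct and follows exactly the same approach as the paper: both use $\epsilon_0 = 1$ together with the ratio bound $\epsilon_{i+1} \leq \epsilon_i / L$ from Lemma~\ref{epsilon decreasing}, applied inductively.
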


\begin{proof}
Recall $\epsilon_{0} =1$ so applying Lemma~\ref{epsilon decreasing} inductively, we get
	\[
	\epsilon_{i}\leq \left(\frac{1}{L}\right)^{i}\epsilon_{0}= \frac{1}{L^{i}}.\qedhere
	\]
\end{proof}

As we did in Lemma~\ref{epsilon decreasing}, it is convenient to simplify our definition of $\epsilon_{i+1}$ as given in~\eqref{epsiloni+1}. Recall
\[
\epsilon_{i+1} =  \frac{1}{8}\left((8\sqrt{n})^{n}N^{(n+2)\log_{2}(2/\sigma\delta_{i})}\right)^{\frac{-1}{q-n}}.
\]
Let $C=C(n,q,N,\sigma)$, be defined as
\begin{equation}\label{defnC}
C\coloneqq 8(8\sqrt{n})^{\frac{n}{q-n}}N^{\frac{n+2}{q-n}\log_{2}(2/\sigma)}=\frac{1}{\epsilon_{1}},
\end{equation}
and $Q=Q(n,q,N)$ as
\begin{equation}\label{defnQ}
Q\coloneqq \frac{n+2}{q-n}\log_{2}(N),
\end{equation}
and note that $C,Q$ are large positive constants.
So now we can write
\begin{equation}\label{simpleepsiloni+1}
	\epsilon_{i+1} = \frac{1}{C}\delta_{i}^{Q}.
\end{equation}

\begin{lem}\label{epsilondontshrinktoofast}
	For all $i\geq0$,
	\[
	\epsilon_{i}\leq (C\epsilon_{i+1})^{\frac{1}{2Q}}.
	\]
\end{lem}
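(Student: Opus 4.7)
The plan is to reduce the claimed inequality to something already established, namely Lemma~\ref{lessL}. Raising both sides to the $2Q$-th power, the statement $\epsilon_i \leq (C\epsilon_{i+1})^{1/(2Q)}$ is equivalent to $\epsilon_i^{2Q} \leq C\epsilon_{i+1}$. Using the simplified recurrence~\eqref{simpleepsiloni+1}, we have $C\epsilon_{i+1} = \delta_i^Q$, so our target becomes $\epsilon_i^{2Q} \leq \delta_i^Q$, or, taking $Q$-th roots (permissible since $Q>0$), the much cleaner inequality
\[
\epsilon_i^{2} \leq \delta_i.
\]

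The case $i=0$ is immediate since $\epsilon_0 = \delta_0 = 1$. For $i\geq 1$, I would invoke Lemma~\ref{final epsilondeltarelation}, which gives $\delta_i = (1/L)^i \epsilon_i$, reducing the desired bound to $\epsilon_i \leq 1/L^i$. But this is precisely the content of Lemma~\ref{lessL}, so we are done.

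There is essentially no obstacle here: the lemma is a bookkeeping step whose whole purpose is to repackage the known decay rates of $\epsilon_i$ and $\delta_i$ into a form that will be convenient for the bi-H\"older estimate in Proposition~\ref{biHolder}. The only thing to double-check is that the exponents $Q$ and the constant $C$ have been defined consistently so that~\eqref{simpleepsiloni+1} really holds as stated; once that is confirmed, the proof is a two-line computation plus a citation of Lemma~\ref{lessL}.
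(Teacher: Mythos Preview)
Your proposal is correct and follows essentially the same route as the paper: both arguments combine the recurrence~\eqref{simpleepsiloni+1} with Lemma~\ref{final epsilondeltarelation} to reduce the claim to $\epsilon_i \leq 1/L^i$, which is Lemma~\ref{lessL}. The only cosmetic difference is that you work backwards from the target inequality to $\epsilon_i^2 \leq \delta_i$, whereas the paper substitutes $\delta_i = \epsilon_i/L^i$ directly into the recurrence and bounds forward.
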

\begin{proof}
	By plugging Lemma~\ref{final epsilondeltarelation} into~\eqref{simpleepsiloni+1}, we see
	\[
	\epsilon_{i+1} = \frac{1}{C}\left(\frac{1}{L^{i}}\epsilon_{i}\right)^{Q}.
	\]
	Using Lemma~\ref{lessL}, we observe
	\[
	\frac{1}{C}\epsilon_{i}^{2Q}\leq 	 \frac{1}{C}\left(\frac{1}{L^{i}}\epsilon_{i}\right)^{Q}= \epsilon_{i+1}.
	\]
	Rearranging, we get
	\[
	\epsilon_{i}\leq (C\epsilon_{i+1})^{\frac{1}{2Q}},
	\]
	because $C,Q\geq 1$ from Note~\ref{Nbig}, as desired.
\end{proof}

\begin{proof}[Proof of Proposition~\ref{biHolder}]

As a consequence of Lemma~\ref{epsilon decreasing} and~\eqref{deltai+1} we see that $(\delta_{j})_{j\in\mathbb{N}}$ limits to zero, this sequence starts at $\delta_{0}=1$ and thus for any $r\in(0,1]$ there exists $i$ such that
\[
\delta_{i+1}<r\leq \delta_{i}.
\]
Also, as $\diam(X)=1$, for any $x,y\in X$, either $x=y$, or $d(x,y)\in(0,1]$ and, therefore, there exists $i$ such that
\[
\delta_{i+1}<d(x,y)\leq \delta_{i}.
\]
If $x=y$, then~\eqref{equationbiholderprop} trivially holds, so we assume $\delta_{i+1}<d(x,y)\leq \delta_{i}$.
This implies that 
	\[
  \frac{1}{2\sqrt{2(n+1)}}\epsilon_{i+1}\leq d(f(x),f(y))\leq \frac{9}{2}\epsilon_{i},
\]
by combining Lemma~\ref{final above} and Lemma~\ref{final below}.
We can weaken the bounds on $d(x,y)$ using Lemma~\ref{lessL} and Lemma~\ref{final epsilondeltarelation}, to get
\[
{\epsilon_{i+1}}^{2}\leq \frac{1}{L^{i+1}}\epsilon_{i+1} = \delta_{i+1} <d(x,y),
\]
and
\[
d(x,y)\leq \delta_{i}=\frac{1}{L^{i}}\epsilon_{i}\leq \epsilon_{i}.
\]
Now, utilising Lemma~\ref{epsilondontshrinktoofast}, we get
\[
d(f(x),f(y))\leq\frac{9}{2}\epsilon_{i}\leq \frac{9}{2}C^{\frac{1}{2Q}}({\epsilon_{i+1}}^{2})^{\frac{1}{4Q}}\leq\frac{9}{2}C^{\frac{1}{2Q}}(d(x,y))^{\frac{1}{4Q}},
\]
and
\[
\frac{1}{2\sqrt{2(n+1)}C}d(x,y)^{2Q}\leq\frac{1}{2\sqrt{2(n+1)}C}{\epsilon_{i}}^{2Q}\leq \frac{1}{2\sqrt{2(n+1)}}\epsilon_{i+1}\leq d(f(x),f(y)).
\]
Let $\lambda=\lambda(n,C,Q)$ be defined as
\[
\lambda \coloneqq \max\left\{\frac{9}{2}C^{\frac{1}{2Q}},2\sqrt{2(n+1)}C\right\}.
\]
Summarising, we have the desired bi-H\"older inequality for $f$
\[
	\frac{1}{\lambda}d(x,y)^{2Q}\leq d(f(x),f(y))\leq \lambda d(x,y)^{\frac{1}{4Q}}.\qedhere
\]
\end{proof}

\subsection{Checking the consistency of choices relating to $N$}

Throughout, we made some assumptions on the size of $N$. In this subsection, we summarise these assumptions and verify that they can be satisfied simultaneously.

Recall the following definitions of constants:
\begin{alignat*}{2}
L&= \frac{128(n+1)^{2}}{\sigma^{2}}, &&\text{ \eqref{defnL}}\\
\delta_{1}&= \frac{1}{L}\epsilon_{1}, &&\text{ \eqref{deltai+1}}\\ 
B_{1}&= (8\sqrt{n})^{\frac{n}{q-n}}, &&\text{ \eqref{defnB1}}\\
B_{2}&= \frac{n+2}{q-n}\log_{2}\left(\frac{2}{\sigma}\right), &&\text{ \eqref{defnB2}}\\
C&= 8(8\sqrt{n})^{\frac{n}{q-n}}N^{\frac{n+2}{q-n}\log_{2}(2/\sigma)} = \frac{1}{\epsilon_{1}}, &&\text{ \eqref{defnC}}\\
Q&= \frac{n+2}{q-n}\log_{2}(N). &&\text{ \eqref{defnQ}}
\end{alignat*}

\begin{lem}\label{assumptionsonN}
	For $N$ sufficiently large,
	\begin{gather*}
	\delta_{1} \text{ is sufficiently small to apply Lemma~\ref{final sizecontrolledcovers}},\\
	C\geq 1,\\
	Q\geq 1,\\
	B_{1}N^{B_{2}}\geq L.
	\end{gather*}
\end{lem}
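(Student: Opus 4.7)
The plan is to verify the four conditions one at a time, each by identifying a threshold on $N$ past which the condition holds, and then taking $N$ to exceed the maximum of all such thresholds. The key preliminary observation is that $L$, $B_1$, and $B_2$ are all independent of $N$ (being functions only of $n$, $q$, $\sigma$), and that $B_2 > 0$ since $\sigma \in (0,1)$ forces $\log_2(2/\sigma) > 1$; this positivity is what makes $N^{B_2}$ an increasing function of $N$ and drives the whole argument.

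With that noted, I would dispatch the three ``$N$ large'' conditions in order. For $Q \geq 1$, from the definition $Q = \tfrac{n+2}{q-n}\log_2(N)$, this simply requires $N \geq 2^{(q-n)/(n+2)}$. For $B_1 N^{B_2} \geq L$, since $B_2 > 0$, this rearranges to $N \geq (L/B_1)^{1/B_2}$. For $C \geq 1$, I would either directly use $C = 8 B_1 N^{B_2} \geq 8 L \geq 8 \geq 1$ once the previous threshold is met, or bound separately by $N \geq (8B_1)^{-1/B_2}$. Finally, for $\delta_1$ to satisfy the ``sufficiently small'' hypothesis of Lemma~\ref{final sizecontrolledcovers}, I would use the identity $\delta_1 = \epsilon_1 / L = 1/(LC) = 1/(8 L B_1 N^{B_2})$, which tends to $0$ as $N \to \infty$; thus $\delta_1$ falls below any prescribed threshold for $N$ sufficiently large.

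Taking $N = N(n,q,\sigma)$ to be the maximum of these four thresholds, together with the original $N$-doubling constant of $X$ (which by Note~\ref{Nbig} may be freely enlarged without disturbing any of $n$, $q$, $\sigma$), yields a single choice simultaneously satisfying all the assumptions used throughout the construction. There is no genuine obstacle here: each condition is a monotone consequence of making $N$ larger, so the whole statement reduces to a routine bookkeeping check that the finitely many thresholds can be dominated by a common one.
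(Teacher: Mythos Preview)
Your proof is correct and follows essentially the same approach as the paper: both identify that $L$, $B_{1}$, $B_{2}$ are positive and independent of $N$, and then use the positivity of $B_{2}$ (and of $(n+2)/(q-n)$) to drive $C$, $Q$, and $B_{1}N^{B_{2}}$ to infinity and $\delta_{1}=1/(LC)$ to zero. The only cosmetic difference is that you write down explicit thresholds for $N$ whereas the paper phrases each condition as a limit; the underlying argument is identical.
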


\begin{proof}
	Note that $n+2\geq 1$ and $q-n>0$ so $(n+2)/(q-n)> 0$. Hence, $Q\to \infty$ as $N\to \infty$. Further, $0<\sigma<1$ so $\log_{2}(2/\sigma)>0$, and, therefore $C\to \infty$ as $N\to \infty$ as well.
	
	Note that $B_{1}$ and $B_{2}$, are both positive and independent of $N$, hence
	$B_{1}N^{B_{2}}\to \infty$ as $N\to \infty$.
	Observe that $L$ is independent of $N$ and therefore $B_{1}N^{B_{2}}\geq L$ for $N$ large enough. This independence between $L$ and $N$ also has the consequence that $\delta_{1} = 1/(LC) \to 0$ as $N\to \infty$.
\end{proof}

\subsection{Proof of Theorem~\ref{thmmain}}

We now summarise the above to conclude the proof of Theorem~\ref{thmmain}.

\begin{proof}[Proof of Theorem~\ref{thmmain}]
	Suppose $\diam(X) = 1$.
	The $f$ stated in the theorem is the pointwise limit of the sequence of functions $(f_{j})_{j\in\mathbb{N}}$ as defined in Section~\ref{constructionsec}. By Proposition~\ref{propqmeasureatmost4q}, the Hausdorff $q$-measure of $f(X)$ is at most $4^{q}$, and, by Proposition~\ref{biHolder}, the distance estimate holds with $\mu=\lambda$, $\alpha = 2Q$ and $\beta = 1/4Q$.
	
	For any bounded metric space $(X,d)$ with non-zero diameter, we can always rescale the metric and get another metric space with much the same properties but with diameter equal to $1$. Indeed, note that $(X,\frac{1}{\diam(X)}d)$ is also a metric space which is compact if $X$ is; similarly, doubling and capacity dimension $n$ with coefficient $\sigma$ are unaffected by rescaling the metric. Let
	\[
	\phi\colon (X,d)\rightarrow \left(X,\frac{1}{\diam(X)}d\right)
	\]
	be the identity map on the set $X$. As $\left(X,\frac{1}{\diam(X)}d\right)$ has diameter equal to $1$, from the above, we have a map $f\colon \left(X,\frac{1}{\diam(X)}d\right)\rightarrow \ell^{2}$ satisfying inequalities
	\[
	\frac{1}{\mu}\left(\frac{1}{\diam(X)}d(x,y)\right)^{\alpha}\leq d_{\ell^{2}}(f(x),f(y))\leq \mu \left(\frac{1}{\diam(X)}d(x,y)\right)^{\beta},
	\]
	for some $\mu,\alpha,\beta$. Pull this back to $(X,d)$ via $\phi$ to get $f\circ\phi\colon (X,d)\rightarrow \ell^{2}$ with
	\[
	\frac{1}{\mu\diam(X)^{\alpha}}d(x,y)^{\alpha}\leq d_{\ell^{2}}(f\circ\phi(x),f\circ\phi(y))\leq \frac{\mu}{\diam(X)^{\beta}} d(x,y)^{\beta}.
	\]
	As $\phi$ is the identity map on the set $X$, the image of $f$ doesn't change under composition with $\phi$, $f\circ\phi(X) = f(X)$, and therefore the image of $f\circ\phi$ also has Hausdorff $q$-measure at most $4^{q}$.
\end{proof}

\subsection{Proofs of corollaries}

\begin{proof}[Proof of Corollary~\ref{cor1}]
	From Theorem~\ref{thmmain}, for any $q>n$, we can find a bi-H\"older map, $f\colon X\rightarrow \ell^{2}$, where the image has finite Hausdorff $q$-measure and, therefore, Hausdorff dimension at most $q$. Restricting the range of $f$ to its image makes $f$ a H\"older equivalence between $X$ and $f(X)$, which gives $\holdim(X)\leq q$. Further, as $q>n$ was arbitrary, we conclude $\holdim(X)\leq n$.
\end{proof}

To relate Theorem~\ref{thmmain} to Corollary~\ref{cormain} we need to understand local self-similarity better.

\begin{lem}\label{doubling}
	If $X$ is a compact, locally self-similar metric space, then $X$ is $N$-doubling, for some $N\in\mathbb{N}$.
\end{lem}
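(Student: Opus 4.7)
The plan is to use compactness to handle large scales and local self-similarity to reduce small scales to a single controlled scale.

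First, recall that compactness of $X$ implies total boundedness: for any fixed $\epsilon>0$, the minimal number of balls of radius $\epsilon$ needed to cover $X$ is finite. Let $R_0$ be a threshold such that the local self-similarity property of Definition~\ref{defnlocselfsim} holds for all $R\geq R_0$, and set $r_0 \coloneqq \Lambda_0/(2R_0)$. For any ball $B(x,r)\subseteq X$ with $r\geq r_0$, I would cover $B(x,r)$ simply by noting $B(x,r)\subseteq X$ and using that $X$ admits a finite cover by balls of radius $r_0/2\leq r/2$; this gives a uniform bound at "large" scales.

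For $r<r_0$, the idea is to blow up $B(x,r)\cap X$ using local self-similarity, cover the enlarged set using compactness of $X$, and pull the cover back. Concretely, set $R=\Lambda_0/(2r)>R_0>1$, let $A=B(x,r)\cap X$ (so $\diam(A)\leq 2r=\Lambda_0/R$), and apply Definition~\ref{defnlocselfsim} to obtain a map $f\colon A\to X$ satisfying $Rd(z_1,z_2)/\lambda\leq d(f(z_1),f(z_2))\leq \lambda R d(z_1,z_2)$. Then $f(A)\subseteq X$, and by compactness $X$ admits a cover by some fixed number $N_1$ of balls of radius $\Lambda_0/(8\lambda)$, independent of $r$ and $x$; restricting this cover to one that meets $f(A)$ and recentering at points of $f(A)$ (at the cost of doubling the radius) yields a cover of $f(A)$ by at most $N_1$ balls of radius $\Lambda_0/(4\lambda)$ centered in $f(A)$.

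Pulling back via $f^{-1}$, each pre-image has diameter at most $(2\cdot \Lambda_0/(4\lambda))\cdot \lambda/R = \Lambda_0/(2R) = r$, so in particular each is contained in a ball of radius $r/2$ centered at any of its points. This yields a cover of $A$ by at most $N_1$ balls of radius $r/2$ in $X$. Taking $N$ to be the maximum of $N_1$ and the covering number used in the large-scale case completes the proof.

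The only nontrivial step is choosing $R$ correctly so that simultaneously $R\geq R_0$, $\diam(A)\leq \Lambda_0/R$, and the pulled-back cover has radius $\leq r/2$; this is the bookkeeping that determines the cutoff $r_0$ and the constant $N$, but involves no genuine difficulty beyond tracking the $\lambda$-factors through the bi-Lipschitz bound.
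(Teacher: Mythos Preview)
Your approach is essentially identical to the paper's: cover $X$ once by finitely many small balls via compactness, use this cover directly for large radii, and for small radii blow up $B(x,r)$ by local self-similarity, pull the fixed cover back, and recenter. The only issue is a factor-of-two slip at the end: a set of diameter at most $r$ is contained in a ball of radius $r$ (not $r/2$) about any of its points, so your choice of $\Lambda_0/(8\lambda)$ for the covering radius yields balls of radius $r$, not $r/2$, after pullback. The paper avoids this by taking $\epsilon \leq \Lambda_0/(16\lambda)$; if you halve your covering radius in step~5 the argument goes through verbatim.
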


\begin{proof}
	Let $\lambda, R,$ and $\Lambda_{0}$ be as in the definition of local self-similarity, Definition~\ref{defnlocselfsim}.
	Let $\epsilon = \min \{\Lambda_{0}/2R, \Lambda_{0}/16\lambda  \}$. Consider the cover of $X$, $\bigcup_{x\in X} B(x,\epsilon)$, by $\epsilon$-balls and use compactness to take a finite subcover, say $\bigcup_{i=1}^{N} B(x_{i},\epsilon)$. Let $x \in X$ and $r>0$. If $r\geq \Lambda_{0}/R$ then, as $\epsilon \leq \Lambda_{0}/2R\leq r/2$, $\{B(x_{i},r/2)\}_{i=1}^{N}$ is a cover of $B(x,r)$, as it covers $X$, of at most $N$ balls of half the radius. If $r<\Lambda_{0}/2R$, then $\Lambda_{0}/2r\geq R$, so $\Lambda_{0}/2r$ is sufficiently large to apply local self-similarity of $X$. As $\diam(B(x,r))\leq 2r\leq \Lambda_{0}/(\Lambda_{0}/2r)$, we can find a $\lambda$-bi-Lipschitz embedding, $f\colon(B(x,r),\frac{\Lambda_{0}}{2r}d)\rightarrow X$, by the local self-similarity of $X$. Note that as $\{B(x_{i},\epsilon)\}_{i=1}^{N}$ covers $X$, it also covers $f(B(x,r))$, so we can pull this cover of the image back through $f$ to get a cover for $B(x,r)$. Using the lower bound of the bi-Lipschitz inequality for $f$, we get that the diameter of the preimage of each element of this cover is at most $4\lambda r \epsilon/\Lambda_{0}$. If the preimage of a ball is empty, then it contributes nothing to covering $B(x,r)$ and we can ignore it. If not, pick $y_{i} \in f^{-1}(B(x_{i},\epsilon))$ and note that $f^{-1}(B(x_{i},\epsilon)) \subseteq B(y_{i},8\lambda r \epsilon/\Lambda_{0})$. Hence, $\{B(y_{i},8\lambda r \epsilon\Lambda_{0})\}_{i}$ covers $B(x,r)$. As $\epsilon \leq \Lambda_{0}/16\lambda$, the radius of each of these balls is at most $r/2$. There are at most $N$ centres, $y_{i}$, so at most $N$ balls in this collection. Hence, $B(x,r)$ is covered by at most $N$ balls of half the radius and we obtain the desired result. 
\end{proof}

\begin{proof}[Proof of Corollary~\ref{cormain}.]
	From Lemma~\ref{doubling}, $X$ locally self-similar implies $X$ is doubling, so we can use Corollary~\ref{cor1} to obtain that $X$ has H\"older dimension at most its capacity dimension. By Theorem~\ref{BLcor}, $X$ has capacity dimension equal to its topological dimension therefore, the H\"older dimension of $X$ is at most its topological dimension. However, topological dimension is always a lower bound for H\"older dimension by~\eqref{topdimlowerboundforholdim}, so we have the reverse inequality too. Combining these inequalities, we arrive at the desired equality.
\end{proof}

\section{Notation for Cantor sets}\label{sectionnotationforcantorsets}
In this section, we provide notation for a standard construction of Cantor sets in the interval $[0,1]$. This notation is an extension of that which is found in~\cite[Section 4.10]{Mattila}. 

Denote $I=I_{0,1}=[0,1]$. To construct the standard $1/3$-Cantor set, cut the middle third, $J_{1,1}=(1/3,2/3)$, from $I_{0,1}$ to get $I_{1,1} = [0,1/3]$ and $I_{1,2}= [2/3,1]$, then repeat by cutting out the middle third of both $I_{1,1}$ and $I_{1,2}$, and so on. The remaining set, after cutting out middle thirds ad infinitum in this manner, is the $1/3$-Cantor set. More precisely, inductively define a collection of intervals $I_{n,i}$, for $n\in\mathbb{N}$ and $1\leq i \leq 2^{n}$, by removing the middle open interval, $J_{n+1,i}$, of diameter $\frac{1}{3}\diam(I_{n,i})$ from $I_{n,i}$ to get two disjoint closed intervals $I_{n+1,2i-1}$ and $I_{n+1,2i}$. Taking the collection of points that, for every $n\geq 0$, lie in one of $I_{n,i}$ gives us a Cantor set, $C$,
\[
C=\bigcap_{n=0}^{\infty}\bigcup_{i=1}^{2^{n}}I_{n,i}.
\] 
This Cantor set is known as the $1/3$-Cantor set because of the constant ratio of diameters
\[
\frac{\diam(J_{n+1,i})}{\diam(I_{n,i})} = \frac{1}{3}.
\]
However, we could have chosen $J_{n+1,i}$ to be the middle open interval of diameter $\lambda\diam(I_{n,i})$ from $I_{n,i}$ for any $0<\lambda<1$ and this process would have produced another Cantor set with identical topological properties but potentially different metric ones.

We can go further and chose arbitrary diameters for $J_{n+1,i}$ for all $n\geq 0$, $1\leq i\leq 2^{n}$ provided that $0<\diam(J_{n+1,i})<\diam(I_{n,i})$, and still obtain a Cantor set.

In the following, we will use the $1/3$-Cantor set as an example of a locally self-similar space, and define a generalised Cantor set, using this notation, as a non-example of local self-similarity. The key to constructing the non-example of Theorem~\ref{thmdimtopwontdo} will be to have, as $n$ increases, the diameters of the gaps become progressively smaller proportions of the intervals they are cut from. 
\section{Product of a Cantor set and a hyper-cube}\label{sectionCtimesI}

As discussed in the introduction, the H\"older dimension of a space can be not attained, and we gave the $1/3$-Cantor set as an example when the H\"older dimension was equal to $0$. In this section, we provide a family of examples of compact, locally self-similar spaces with H\"older dimension $n$, for any $n\in\mathbb{N}$, none of which attain their H\"older dimension. Recall,

\begin{thm}[Theorem~\ref{thmCtimesI}]\label{propcantorcrosshypercube}
	Let $n\in\mathbb{N}$, $I^{n}=[0,1]^{n}$ be the unit hyper-cube in $\mathbb{R}^{n}$, $C$ be the $1/3$-Cantor set, and $X=C\times I^{n}$ their product with the $\ell^{2}$ metric. Let $Y$ be a H\"older equivalent metric space to $X$. Then $Y$ has Hausdorff dimension strictly greater than $n$. In particular, $C\times I^{n}$ has H\"older dimension $n$ but no H\"older equivalent space attains $n$ as its Hausdorff dimension.
\end{thm}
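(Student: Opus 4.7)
The plan is to prove the quantitative bound $\dim_H(Y) \ge n + \log 2/(\alpha \log 3) > n$, where $\alpha \ge 1$ is the lower H\"older exponent of a bi-H\"older equivalence $f\colon X \to Y$ (so $d(f(a),f(b)) \ge \lambda^{-1} d(a,b)^{\alpha}$). The strategy is to manufacture a Frostman-type measure on $Y$ by integrating Frostman measures supported on individual slices $Y_{x} := f(\{x\} \times I^{n})$ against a Frostman measure on the parameter Cantor set $C$: topological rigidity of each slice supplies the ``$n$'' in the final exponent, while the H\"older-distorted Cantor direction supplies the strictly positive excess. The ``in particular'' clause then follows because $\holdim(C\times I^{n})=n$ by Corollary~\ref{cormain} applied to this compact, locally self-similar space.

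First I would note that each $Y_{x}$ is homeomorphic to $I^{n}$, so by Theorem~\ref{topboundshausfrombelow} it has $\dim_H(Y_{x}) \ge n$ and therefore $\mathcal{H}^{s}_{\infty}(Y_{x}) > 0$ for every $s<n$. Fix $s<n$ and $s' < \log 2/\log 3$. Frostman's lemma on $C$ supplies a probability measure $\mu_{C}$ with $\mu_{C}(B_{C}(x,r)) \le c_{1} r^{s'}$. Since $f$ is uniformly continuous on the compact set $X$, the map $x \mapsto Y_{x}$ is continuous in the Hausdorff distance, so $x \mapsto \mathcal{H}^{s}_{\infty}(Y_{x})$ is Borel; the increasing level sets $C_{k} := \{x : \mathcal{H}^{s}_{\infty}(Y_{x}) \ge 1/k\}$ exhaust $C$, hence some $C_{k_{0}}$ carries positive $\mu_{C}$-mass. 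For each $x \in C_{k_{0}}$, Frostman's lemma on $Y_{x}$ produces a Borel measure $\nu_{x}$ with $\nu_{x}(B(y,r)) \le r^{s}$ and $\nu_{x}(Y_{x}) \ge c_{2}/k_{0}$, selected measurably in $x$ via Kuratowski--Ryll-Nardzewski. Setting $\nu := \int_{C_{k_{0}}} \nu_{x}\, d\mu_{C}(x)$ yields a positive Borel measure on $Y$. For any ball $B_{Y}(y,r)$, the bi-H\"older lower bound forces every $x$ with $B_{Y}(y,r) \cap Y_{x} \neq \emptyset$ to lie within $(\lambda r)^{1/\alpha}$ of the $C$-coordinate of $f^{-1}(y)$; combining the Frostman controls on the slice and on $C$ then gives $\nu(B_{Y}(y,r)) \le c_{3}\, r^{s + s'/\alpha}$. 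The mass distribution principle yields $\dim_H(Y) \ge s + s'/\alpha$, and sending $s \nearrow n$ and $s' \nearrow \log 2/\log 3$ completes the proof.

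The main obstacle is that the natural choice $s=n$ does not work: bi-H\"older images of $I^{n}$ need not have positive $n$-dimensional Hausdorff content (a snowflake-type distortion can drive $\mathcal{H}^{n}$ to zero while keeping $\dim_H = n$ and the bi-H\"older constants uniform), so there is no uniform lower bound on $\mathcal{H}^{n}_{\infty}(Y_{x})$ from which to start. Working with $s$ strictly below $n$ bypasses this, because $\dim_H(Y_{x}) \ge n > s$ automatically forces $\mathcal{H}^{s}_{\infty}(Y_{x}) > 0$, and taking the supremum over $s$ restores $n$ in the final exponent. The price is the measurable-selection and Borel-measurability bookkeeping required to make the integration against $\mu_{C}$ legitimate; these are routine but constitute the main technical load of the argument.
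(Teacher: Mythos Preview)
Your approach via integrated Frostman measures is valid and yields the same quantitative bound $\dim_H(Y) \ge n + \log 2/(\alpha\log 3)$ as the paper, but it rests on a false premise that forces unnecessary complications. You assert that bi-H\"older images of $I^n$ need not have positive $\mathcal{H}^n$-content; in fact they always do, with a uniform lower bound depending only on $\lambda$ and $n$. The paper proves this as Lemma~\ref{lemcurvesarebig}: for each slice one builds a $\lambda\sqrt{n}$-Lipschitz map $\Psi\colon Y_x \to I^n$ whose $i$-th coordinate is a truncation of $\lambda\, d_Y(\,\cdot\,,\gamma(A_i))$, where $A_i$ is the $i$-th axial face of $I^n$. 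The lower bi-H\"older inequality forces opposite faces to stay at $d_Y$-distance $\ge 1/\lambda$, so the composite $\Psi\circ\gamma\colon I^n\to I^n$ sends each face to itself; a short degree/homology argument (Proposition~\ref{propabstractFsurjective}, Lemma~\ref{lemselfcubemapsurjective}) then shows $\Psi\circ\gamma$ is surjective, whence $\mathcal{H}^n(Y_x) \ge \mathcal{H}^n(I^n)/(\lambda\sqrt{n})^n$. This is exactly the case $s=n$ you ruled out.

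With that uniform bound the paper bypasses Frostman measures and measurable selection entirely. One integrates the inequality $B \le \sum_{U\in\mathcal{U}} \diam(U)^n \,\mathbbm{1}_{\{\gamma\cap U\neq\emptyset\}}$ against the Cantor measure on the family of slices and uses the spread-out estimate (your step~6, the paper's Lemma~\ref{lemcurvesarespread}) to obtain $\sum_{U\in\mathcal{U}} \diam(U)^{\,n+\log 2/(\alpha\log 3)} \ge B/A$ for every cover $\mathcal{U}$ of $Y$. Your argument would also go through, but the Kuratowski--Ryll-Nardzewski selection and the limit $s\nearrow n$ are artefacts of not seeing the Lipschitz-retraction trick behind Lemma~\ref{lemcurvesarebig}. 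Minor point: for $\holdim(C\times I^n)=n$ the paper invokes Corollary~\ref{cor1} and checks capacity dimension $n$ directly (Proposition~\ref{propcapdimCtimesIis1}) rather than citing local self-similarity; your route via Corollary~\ref{cormain} is also fine.
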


We draw inspiration from an important method in the study of conformal dimension. In the $n=1$ case, this method tells us that the Hausdorff dimension of $C\times I$ cannot be lowered by quasi-symmetric equivalence and, therefore, $C\times I$ is minimal for conformal dimension, see~\cite{PansuConforme}. Under H\"older equivalence we don't get something quite as strong, but from this method we can still derive that, for $C\times I$, the Hausdorff dimension of H\"older equivalent spaces can never be lowered to $1$ (or below). The core idea in the quasi-symmetric case can be found in~\cite[Lemma 1.6]{bourdonpolyedreshyperboliques}; where Bourdon makes explicit ideas formulated by Pansu in~\cite[Proposition 2.9]{PansuConforme} and~\cite[Lemma 6.3]{pansuconformaldim}. Essentially, if one has a large family of curves in a space, $X$, which are spread out enough so that the Hausdorff dimension of $X$ is greater that $1$, and if these curves are still sufficiently spread out in a quantifiable way after applying, in our case, a bi-H\"older map, then the image of these curves still force the Hausdorff dimension of the image to be greater than $1$. To then generalise this to $n\geq 2$ in the quasi-symmetric case, one considers $C\times I^{n}$ as $\left(C\times I^{n-1}\right)\times I$ and uses the same idea about curves. However, this doesn't work for H\"older equivalence, so we introduce some tools to prove that we can, instead, consider a large family of copies of $I^{n}$ in a similar way. We will refer to copies of $I^{n}$ as \redit{hyper-curves}.

There are two key ingredients to this argument; we can find a family of hyper-curves, $\Gamma$, in $Y$ such that:
\begin{enumerate}
	\item The hyper-curves are uniformly `big'. Formally, there exists a uniform, $B>0$, lower bound away from zero such that for any hyper-curve $\gamma\in\Gamma$, the Hausdorff $n$-measure of $\gamma$ as a subspace of $Y$ is at least $B$. The importance of checking this `big'-ness property is that it should mean that every hyper-curve substantially contributes to the (at least) $n$-dimensionality of $Y$, and if we have enough of them, then we should break-out beyond dimension $n$.\\
	\item The hyper-curves are `spread out'. We will quantify this by fitting a measure on $\Gamma$ such that, uniformly, the measure of the set of hyper-curves intersecting a given subset of $Y$ is bounded above polynomially by the diameter of the subset. Such an upper bound encapsulates the `spread out' property, which can be seen by examining the opposite via the extreme of a Dirac mass on a single hyper-curve.
\end{enumerate} 

For any $f\colon C\times I^{n} \rightarrow Y$ bi-H\"older homeomorphism to an equivalent space $Y$, there is a natural choice for a family of hyper-curves in $Y$ obtained by pushing the fibres of $C$ in $C\times I^{n}$ through $f$. Formally, let $\mathcal{C}=\{\gamma_{x}\colon I^{n}\rightarrow C\times I^{n}\mid x\in C \}$ where $\gamma_{x}\colon \underline{t}\mapsto (x,\underline{t})$, and $\Gamma = \{f\circ\gamma_{x}\mid x\in C\}$. We proceed to show that elements of $\Gamma$ are `big' and `spread out' as described above.

Firstly, we state simply and precisely what we mean by curves being uniformly `big'.

\begin{lem}\label{lemcurvesarebig}
	If $\gamma\colon I^{n}\rightarrow Y$ is a $(\lambda,\alpha,\beta)$-bi-H\"older homeomorphism onto its image, then there exists a constant $B=B(\lambda,n)>0$ such that the Hausdorff $n$-measure of $\gamma(I^{n})$ is at least $B$.
\end{lem}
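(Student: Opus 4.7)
The plan is to find $n$ distance functions on $Y$ whose joint image on $\gamma(I^n)$ covers the cube $[0,1/\lambda]^n \subset \mathbb{R}^n$; combined with a $1$-Lipschitz estimate into $(\mathbb{R}^n,\|\cdot\|_\infty)$ this will yield $\mathcal{H}^n(\gamma(I^n)) \ge 1/\lambda^n$. For each $j=1,\dots,n$, let $A_j = \gamma(\{x\in I^n : x_j=0\})$ and define $\rho_j \colon Y \to \mathbb{R}$ by $\rho_j(y) = d(y,A_j)$, which is automatically $1$-Lipschitz. Then $F = (\rho_1,\dots,\rho_n)\colon Y \to (\mathbb{R}^n,\|\cdot\|_\infty)$ is also $1$-Lipschitz.

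The bi-H\"older assumption enters only through the lower bound on pairs in $I^n$ at distance at least $1$, where the exponent $\alpha$ becomes harmless: if $x\in I^n$ has $x_j=1$, then for every $b\in I^n$ with $b_j=0$ one has $d(x,b) \ge |x_j-b_j| = 1$, whence $d(x,b)^\alpha \ge 1$ and $d(\gamma(x),\gamma(b)) \ge d(x,b)^\alpha/\lambda \ge 1/\lambda$. Taking the infimum over such $b$ yields $\rho_j(\gamma(x)) \ge 1/\lambda$ on the face $\{x_j=1\}$, while $\rho_j(\gamma(x)) = 0$ on $\{x_j=0\}$. For any $y \in [0,1/\lambda]^n$ the continuous map $h = F\circ\gamma - y \colon I^n \to \mathbb{R}^n$ then satisfies $h_j \le 0$ on $\{x_j=0\}$ and $h_j \ge 0$ on $\{x_j=1\}$, so the Poincar\'e--Miranda theorem (a standard topological consequence of Brouwer's fixed-point theorem) provides $x\in I^n$ with $F(\gamma(x)) = y$. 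Therefore $F(\gamma(I^n)) \supseteq [0,1/\lambda]^n$.

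Since $F$ is $1$-Lipschitz, Hausdorff $n$-measure does not increase under $F$; moreover a direct calculation in $(\mathbb{R}^n,\|\cdot\|_\infty)$ gives $\mathcal{H}^n([0,1/\lambda]^n) = 1/\lambda^n$ (the cover by $k^n$ sub-cubes of side $1/(k\lambda)$ gives the upper bound $1/\lambda^n$ for every $k$, while the reverse inequality comes from the Lebesgue-measure bound $\sum \diam^n \ge \mathcal{L}^n$ valid for $\ell^\infty$-diameter covers). Hence
\[
\mathcal{H}^n(\gamma(I^n)) \;\ge\; \mathcal{H}^n\!\left(F(\gamma(I^n))\right) \;\ge\; \mathcal{H}^n\!\left([0,1/\lambda]^n\right) = 1/\lambda^n,
\]
so $B(\lambda,n) = 1/\lambda^n$ works. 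The main obstacle is that a naive transfer of Hausdorff $n$-measure through $\gamma^{-1}$ (which is only H\"older with exponent $1/\alpha$) loses too much information when $\alpha > 1$; routing through the genuinely $1$-Lipschitz distance functions $\rho_j$ bypasses this issue, at the price of invoking a Brouwer-type topological existence result to recover surjectivity onto the Euclidean cube.
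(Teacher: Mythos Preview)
Your argument is correct and follows the same strategic line as the paper: build a Lipschitz map from $\gamma(I^n)$ onto a cube using distance-to-face functions, invoke a Brouwer-type result for surjectivity, and push Hausdorff $n$-measure backwards through the Lipschitz map. The differences are in packaging rather than substance. The paper caps and rescales the distance functions so that the product map $\Psi$ lands in $I^n$ with the Euclidean metric (making $\Psi$ only $\lambda\sqrt{n}$-Lipschitz and yielding $B=\mathcal{H}^n(I^n)/(\lambda\sqrt{n})^n$), and then proves surjectivity of $F=\Psi\circ\gamma$ via an explicit reduced-homology argument showing $\restr{F}{\partial I^n}$ has nonzero degree. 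You instead keep the raw distance functions, work with the $\ell^\infty$ target (so the product is honestly $1$-Lipschitz), and cite Poincar\'e--Miranda directly; this gives the sharper constant $B=1/\lambda^n$ and avoids the homology machinery, at the cost of quoting a black-box fixed-point consequence. Both routes encode the same topological content; yours is shorter, the paper's is more self-contained.
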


For any $\gamma\in\Gamma$, by definition, there exists $x\in C$ such that $\gamma =f \circ \gamma_{x}$. Note, for any $x\in C$, $\gamma_{x}$ is an isometry onto its image. Combine this with the $(\lambda,\alpha,\beta)$-bi-H\"older property of $f$ to make the observation that $\gamma\colon I^{n}\rightarrow Y$ is a $(\lambda,\alpha,\beta)$-bi-H\"older homeomorphism onto its image.

We need to introduce the following machinery before we can prove Lemma~\ref{lemcurvesarebig}.

\begin{prop}\label{propabstractFsurjective}
	Let $F\colon I^{n} \rightarrow I^{n}$ be a continuous map such that $F(\partial I^{n})\subseteq \partial I^{n}$ and the map $\restr
	{F}{\partial I^{n}}\colon\partial I^{n}\rightarrow \partial I^{n}$ induces a non-trivial endomorphism on the reduced $(n-1)$-homology group of the boundary, $\tilde{H}_{n-1}(\partial I^{n})$. Then $F$ is surjective.
\end{prop}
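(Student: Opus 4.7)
The plan is to proceed by contradiction: assume $F$ is not surjective, and deduce that $(F|_{\partial I^n})_*$ must be the zero endomorphism of $\tilde{H}_{n-1}(\partial I^n)$, which contradicts the hypothesis. The argument is a standard degree/retraction technique from algebraic topology.

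First I would upgrade the hypothetical non-surjectivity to the existence of a missed \emph{interior} point. The image $F(I^n)$ is compact and hence closed in $I^n$. If $I^n \setminus F(I^n)$ were contained in $\partial I^n$, then $\mathrm{int}(I^n) \subseteq F(I^n)$, and since $F(I^n)$ is closed we would get $I^n = \overline{\mathrm{int}(I^n)} \subseteq F(I^n)$, contradicting non-surjectivity. So I may pick $p \in \mathrm{int}(I^n) \setminus F(I^n)$.

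Next, the key geometric input is the standard retraction $r \colon I^n \setminus \{p\} \to \partial I^n$ obtained by radial projection from $p$ (which is well-defined precisely because $p$ is interior). Since $p \notin F(I^n)$, the map $F$ factors as $F = j \circ F'$, where $F' \colon I^n \to I^n \setminus \{p\}$ is $F$ with codomain restricted and $j \colon I^n \setminus \{p\} \hookrightarrow I^n$ is the inclusion. Because $r$ restricts to the identity on $\partial I^n$ and $F(\partial I^n) \subseteq \partial I^n$, I get the equality
\[
F|_{\partial I^n} = r \circ F'|_{\partial I^n} \colon \partial I^n \to \partial I^n.
\]

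Finally, I would pass to reduced homology. The composite $F'|_{\partial I^n}$ factors through the inclusion of $\partial I^n$ into the contractible space $I^n$, namely $\partial I^n \hookrightarrow I^n \xrightarrow{F'} I^n \setminus \{p\}$. Since $\tilde{H}_{n-1}(I^n) = 0$, the induced map $(F'|_{\partial I^n})_*$ is zero, and therefore
\[
(F|_{\partial I^n})_* = r_* \circ (F'|_{\partial I^n})_* = 0,
\]
contradicting the non-triviality assumption. I expect no real obstacle here: the only slightly delicate point is arranging the missed point to lie in the interior of $I^n$ (handled by the compactness argument above), after which the homological factorisation is essentially formal. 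The argument covers the $n=1$ case as well, where $\tilde{H}_0(\partial I) = \mathbb{Z}$ and the retraction of $I \setminus \{p\}$ onto $\{0,1\}$ is the obvious one.
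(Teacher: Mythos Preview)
Your proposal is correct and follows essentially the same route as the paper: assume a missed point, locate it in the interior of $I^n$, and use the radial retraction $r\colon I^n\setminus\{p\}\to\partial I^n$ together with the factorisation of $F'|_{\partial I^n}$ through the contractible space $I^n$ to force $(F|_{\partial I^n})_*=0$ in $\tilde H_{n-1}$, contradicting the hypothesis.

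The only noteworthy difference is in how the missed point is shown to be interior. The paper first proves a separate preliminary step, namely that $F|_{\partial I^n}\colon\partial I^n\to\partial I^n$ is surjective (by factoring through a punctured sphere if not), and then deduces that any missed point must lie in the interior. Your compactness argument --- $F(I^n)$ is closed, so if it contains $\mathrm{int}(I^n)$ it contains all of $I^n$ --- reaches the same conclusion more directly and sidesteps that preliminary lemma entirely. After that point the two proofs are effectively identical, the paper packaging the homological contradiction in a commutative square while you write out the composition explicitly.
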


\begin{proof}
	First, note that the boundary map $\restr{F}{\partial I^{n}}$ is surjective. If not, then there exists a point $z_{0}\in\partial I^{n}\setminus F(\partial I^{n})$ and we can factor $\restr{F}{\partial I^{n}}$ through $\partial I^{n}\setminus\{z_{0}\}$ to find $\restr
	{F}{\partial I^{n}}\colon\partial I^{n}\rightarrow \partial I^{n}$ is equivalent to the path
	\[
	\partial I^{n} \xrightarrow{F} \partial I^{n}\setminus\{z_{0}\}\hookrightarrow \partial I^{n}.
	\]
	However, $\partial I^{n}\setminus\{z_{0}\}$ deformation retracts to a point, and, therefore, has trivial reduced $(n-1)$-homology. Hence, this path, through $\partial I^{n}\setminus \{z_{0}\}$, implies that the endomorphism induced by $\restr{F}{\partial I^{n}}$ on $\tilde{H}_{n-1}(\partial I^{n})$ is trivial, contradicting our assumption.
	
	Now, for a contradiction, suppose that there exists a point $y_{0}\in I^{n}\setminus F(I^{n})$. Note that this point must lie in the interior of $I^{n}$ as $F$ surjects onto the boundary by the above. As $y_{0}$ is interior, we have an inclusion $\iota\colon\partial I^{n} \hookrightarrow I^{n}\setminus\{y_{0}\}$ which has a retract $r\colon I^{n}\setminus\{y_{0}\} \rightarrow \partial I^{n}$ defined as follows. For any point $y\in I^{n}\setminus\{y_{0}\}$, let $r(y)$ be the point of intersection of $\partial I^{n}$ and the straight line $l_{y} \coloneqq\{t y + (1-t)y_{0}\mid t\geq 0\}$ that originates at $y_{0}$ and passes through $y$. Note, $r$ restricts to the identity on the boundary, so $r\circ \iota$ is the identity map on $\partial I^{n}$. The composition $\iota\circ r$ is homotopic to the identity by the straight line homotopy $(y,t)\mapsto ty+(1-t)r(y)$. This map takes values in $I^{n}\setminus\{y_{0}\}$ because $y_{0}, y, r(y)$ are co-linear, in that order along the line $l_{y}\subseteq I^{n}$, and the points $ty+(1-t)r(y)$, for $0\leq t\leq 1$, are contained in the segment of $l_{y}$ containing $y$ and $r(y)$, which does not contain $y_{0}$.
	
	Hence, $\iota$ is a homotopy equivalence and we may deduce that $\iota_{*}\colon \tilde{H}_{n-1}(\partial I^{n}) \rightarrow \tilde{H}_{n-1}(I^{n}\setminus\{y_{0}\})$ is an isomorphism.
	
	Observe that 
we have the following commutative diagram;
	\begin{center}
		\begin{tikzpicture}[every node/.style={midway}]
		\matrix[ column sep={3em}, row sep={3em}] at (0,0) {
			\node(11) {$I^{n}$}  ; & \node(12) {$I^{n}\setminus\{y_{0}\}$}; \\
			\node(21) {$\partial I^{n}$}; & \node (22) {$\partial I^{n}$};\\
		};
		\draw[right hook->] (21) -- (11) node[anchor=east] {};
		\draw[->] (11) -- (12) node[anchor=south] {$F$};
		\draw[right hook->] (22) -- (12) node[anchor=west] {$\iota$};
		\draw[->] (21) -- (22) node[anchor=north] {$F$};
		\end{tikzpicture}
	\end{center}
which induces the following commutative diagram in the the reduced $(n-1)$-homology;
\begin{center}
	\begin{tikzpicture}[every node/.style={midway}]
	\matrix[ column sep={3em}, row sep={3em}] at (0,0) {
		\node(11) {$0$}  ; & \node(12) {$\mathbb{Z}$}; \\
		\node(21) {$\mathbb{Z}$}; & \node (22) {$\mathbb{Z}$};\\
	};
	\draw[->] (21) -- (11) node[anchor=east] {};
	\draw[->] (11) -- (12) node[anchor=south] {};
	\draw[->] (22) -- (12) node[anchor=west] {$\iota_{*}$};
	\draw[->] (21) -- (22) node[anchor=north] {$F_{*}$};
	\end{tikzpicture}
\end{center}
	Further, as $F$ induces a non-trivial homomorphism $F_{*}\colon\tilde{H}_{n-1}(\partial I^{n}) \rightarrow \tilde{H}_{n-1}(\partial I^{n})$ and $\iota_{*}\colon \tilde{H}_{n-1}(\partial I^{n})\rightarrow \tilde{H}_{n-1}(I^{n}\setminus\{y_{0}\})$ is an isomorphism, 
the commutative diagram in the homology above gives contradiction.
\end{proof}

Recall, for any $\gamma\in\Gamma$, $\gamma\colon I^{n}\rightarrow Y$ is a $(\lambda,\alpha,\beta)$-bi-H\"older homeomorphism onto its image. To massage this set-up into one that can utilise Proposition~\ref{propabstractFsurjective} we introduce the following notation.

Let $A_{i}$ be the `axial' face of $I^{n}$ defined to be the subset of $I^{n}$ with value $0$ in the $i$-th coordinate;
\[
A_{i}\coloneqq I^{i-1}\times\{0\}\times I^{n-i}.
\]
Each axial face has an `opposite' face $O_{i}$ defined to be the subset of $I^{n}$ with value $1$ in the $i$-th coordinate;
\[
O_{i}\coloneqq I^{i-1}\times\{1\}\times I^{n-i}.
\]

The idea is that $\gamma$ transfers these faces over to $Y$ as distorted versions of themselves, which we use to build a map from $Y$ to the cube, and then compose with $\gamma$ to obtain a self-map of the cube to which we can apply Proposition~\ref{propabstractFsurjective}.

For each $i$, we define a map $\phi_{i}$ as a kind of projection in the $i$-th direction.
Define $\phi_{i}\colon Y \rightarrow \mathbb{R}_{\geq 0}$ by
\[
\phi_{i}(y) = \inf_{x\in A_{i}} d_{Y}(y,\gamma(x)),
\]
for any $y\in Y$. More concisely written;
\[
\phi_{i}(y) = d_{Y}(y,\gamma(A_{i})).
\]
Note, as $\phi_{i}$ is a distance function to a set, $\phi_{i}$ is $1$-Lipschitz and thus also continuous.

We could now take the product of these maps to build a map to $\mathbb{R}^{n}$, but it is unclear what the image of this map will look like. Instead, we make a slight adjustment to these maps to make their product simpler. Define $\psi_{i}\colon Y \rightarrow [0,1]$ by capping $\phi_{i}$ at $1/\lambda$ and then rescaling to $[0,1]$. That is,
\[
\psi_{i}(y) \coloneqq \lambda\max\{\phi_{i}(y),1/\lambda\}.
\]
Note that $\psi_{i}$ inherits $\phi_{i}$'s Lipschitz-ness, but is now $\lambda$-Lipschitz for each $i$.
Let $\Psi\colon Y\rightarrow I^{n}$ be the product of these maps; for any $y\in Y$ define
\begin{equation}\label{defnPsi}
\Psi(y)\coloneqq \left(\psi_{i}(y)\right)_{1\leq i \leq n},
\end{equation}
and note the following lemma.
\begin{lem}\label{lemPsiLipschitz}
	The product map $\Psi$ is $\lambda\sqrt{n}$-Lipschitz.
\end{lem}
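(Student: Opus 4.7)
The plan is to verify this directly from the componentwise Lipschitz bound on each $\psi_i$ and the definition of the Euclidean norm on $I^n \subset \mathbb{R}^n$.

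First I would recall that each $\phi_i$ is $1$-Lipschitz, being a distance function to the set $\gamma(A_i) \subseteq Y$. From this, observe that capping $\phi_i$ from below by the constant $1/\lambda$ (via $\max$) preserves $1$-Lipschitz continuity, since the max of two $1$-Lipschitz functions is $1$-Lipschitz and constants are $0$-Lipschitz. Multiplication by $\lambda$ then scales the Lipschitz constant, so $\psi_i$ is $\lambda$-Lipschitz, confirming the claim already noted in the text.

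Next I would estimate the Euclidean distance between $\Psi(y)$ and $\Psi(y')$ for arbitrary $y,y' \in Y$ by writing
\[
d_{\mathbb{R}^n}(\Psi(y),\Psi(y'))^2 = \sum_{i=1}^{n}\bigl(\psi_i(y)-\psi_i(y')\bigr)^2 \leq \sum_{i=1}^{n}\lambda^2 d_Y(y,y')^2 = n\lambda^2 d_Y(y,y')^2,
\]
using the $\lambda$-Lipschitz bound for each $\psi_i$ term by term. Taking square roots yields $d_{\mathbb{R}^n}(\Psi(y),\Psi(y'))\leq \lambda\sqrt{n}\,d_Y(y,y')$, which is precisely the claim. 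There is no significant obstacle here; the only thing worth double-checking is that the codomain $I^n$ is indeed being equipped with the $\ell^2$ (Euclidean) metric, which is consistent with the metric used on $X = C\times I^n$ in Theorem~\ref{thmCtimesI}.
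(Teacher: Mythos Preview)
Your argument is correct and is exactly the standard computation the paper has in mind; indeed, the paper states Lemma~\ref{lemPsiLipschitz} without proof, simply asking the reader to ``note'' it after observing that each $\psi_i$ is $\lambda$-Lipschitz. Your componentwise estimate followed by summing squares and taking a square root is the intended (and essentially only) route.
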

The product map $\Psi$ has image in the unit hyper-cube, $I^{n}$, so we can reduce to studying continuous self-maps of the unit hyper-cube, with some desirable properties, by composing with $\gamma$. Define the continuous map $F\colon I^{n} \rightarrow I^{n}$ by 
\begin{equation}\label{defnF}
F \coloneqq \Psi\circ \gamma.
\end{equation}

A useful observation to make is that the $(n-1)$-dimensional faces are mapped to themselves under $F$. Indeed, for any $x\in A_{i}$, $\gamma(x)\in \gamma(A_{i})$ so $d_{Y}(\gamma(x),\gamma(A_{i}))=0$, and therefore, the $i$-th coordinate of $F(x)$ is $0$, which characterises being an element of $A_{i}$. For $x\in O_{i}$, $x$ is at least $1$ away from every point in $A_{i}$, and therefore, by the lower bound of the bi-H\"older inequality for $\gamma$, $d_{Y}(\gamma(x_{i}),\gamma(A_{i}))\geq 1/\lambda$. After applying the capping and rescaling, we see that the $i$-th component of $F(x)$ takes the value $1$, thus determining it as an element of $O_{i}$. This observation will allow us to use the following lemma when studying $F$.

\begin{lem}\label{lemboundarymaphomotopictoidentity}
	Let $F\colon I^{n}\rightarrow I^{n}$ be a continuous map such that, for all $i$, $F(A_{i})\subseteq A_{i}$ and $F(O_{i})\subseteq O_{i}$, then $F(\partial I^{n})\subseteq \partial I^{n}$ and $\restr{F}{\partial I^{n}}\colon \partial I^{n}\rightarrow \partial I^{n}$ is homotopic to the identity on $\partial I^{n}$.
\end{lem}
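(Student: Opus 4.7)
The plan is to establish the containment $F(\partial I^{n}) \subseteq \partial I^{n}$ from the hypotheses, then to produce the desired homotopy between $\restr{F}{\partial I^{n}}$ and the identity via the straight-line homotopy. The key observation that makes this work is that the coordinate-by-coordinate constraints imposed by $F(A_{i})\subseteq A_{i}$ and $F(O_{i})\subseteq O_{i}$ are preserved under convex combination, so the naive straight-line homotopy in $I^{n}$ automatically stays in $\partial I^{n}$.

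First I would note that $\partial I^{n} = \bigcup_{i=1}^{n}(A_{i}\cup O_{i})$, so any $x\in \partial I^{n}$ lies in some $A_{i}$ or $O_{i}$; the hypothesis then gives $F(x) \in A_{i}$ (resp. $O_{i}$), and hence $F(x)\in \partial I^{n}$. This verifies the first claim and also yields the pointwise observation I really want: if $x_{i}=0$ then $F(x)_{i}=0$, and if $x_{i}=1$ then $F(x)_{i}=1$.

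Next I would define the candidate homotopy $H\colon \partial I^{n} \times [0,1] \rightarrow I^{n}$ by
\[
H(x,t) \coloneqq (1-t)x + tF(x),
\]
which is continuous and satisfies $H(x,0)=x$, $H(x,1)=F(x)$. The main (and only nontrivial) step is to verify that $H$ takes values in $\partial I^{n}$. For any $x\in \partial I^{n}$, pick some index $i$ such that either $x_{i}=0$ or $x_{i}=1$. By the coordinate observation above, $F(x)_{i} = x_{i}$, so the $i$-th coordinate of $H(x,t)$ is $(1-t)x_{i}+tx_{i}=x_{i}\in\{0,1\}$ for every $t\in[0,1]$. Thus $H(x,t)\in A_{i}\cup O_{i}\subseteq\partial I^{n}$, as needed.

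Having shown $H$ takes values in $\partial I^{n}$, it is a genuine homotopy in $\partial I^{n}$ from the identity to $\restr{F}{\partial I^{n}}$, and the lemma follows. I expect no real obstacle: the only delicate point is that a straight-line homotopy in $I^{n}$ between two boundary points could a priori dip into the interior, but the face-preservation hypotheses are exactly what prevents this, since each boundary point has a coordinate pinned to $0$ or $1$ and that coordinate is pinned to the same value by $F$.
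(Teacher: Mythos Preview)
Your proposal is correct and follows essentially the same approach as the paper: both establish $F(\partial I^{n})\subseteq\partial I^{n}$ from the face decomposition $\partial I^{n}=\bigcup_{i}(A_{i}\cup O_{i})$, and both use the straight-line homotopy $H(x,t)=(1-t)x+tF(x)$, verifying it stays in $\partial I^{n}$ by noting that $x$ and $F(x)$ lie in a common face. Your coordinate-wise verification that $H(x,t)_{i}=x_{i}$ is just an explicit way of saying ``faces are convex,'' which is how the paper phrases it.
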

\begin{proof}
	Firstly, $\restr{F}{\partial I^{n}}$ has image in $\partial I^{n}$, because $\partial I^{n}$ is covered by the $(n-1)$-dimensional faces and the $(n-1)$-dimensional faces are all contained in $\partial I^{n}$.
	
	To see that $\restr{F}{\partial I^{n}}$ is homotopic to the identity on $\partial I^{n}$, consider the map $H\colon \partial I^{n}\times I \rightarrow \partial I^{n}$ defined as
	\[
	H(x,t)\coloneqq t F(x) + (1-t) x.
	\]
	At face value, $H(x,0)=x$ and $H(x,1)=F(x)$, and $H$ is continuous. However, it is not immediately obvious that the linear combination $t F(x) + (1-t) x$ is in $\partial I^{n}$ for all $t\in [0,1]$ and not just in $\mathbb{R}^{n}$. However, for all $i$, $F$ maps faces $A_{i}$ and $O_{i}$ to $A_{i}$ and $O_{i}$ respectively. Therefore, for any $x\in \partial I^{n}$, $x$ lies in a face $S$, and thus, $F(x)$ also lies in $S$. The faces of $I^{n}$ are convex, meaning that the straight line joining $x$ and $F(x)$, namely $\{t F(x) + (1-t) x\mid t\in[0,1]\}$, also lies in the face $S$. Thus we can conclude that $F$ restricted to the boundary is indeed homotopic to the identity.
\end{proof}

We can now combine Lemma~\ref{lemboundarymaphomotopictoidentity} and Proposition~\ref{propabstractFsurjective} to get the following.

\begin{lem}\label{lemselfcubemapsurjective}
	Let $F\colon I^{n}\rightarrow I^{n}$ be a continuous map such that, for all $i$, $F(A_{i})\subseteq A_{i}$ and $F(O_{i})\subseteq O_{i}$. Then $F$ is surjective.
\end{lem}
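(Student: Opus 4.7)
The plan is to combine the two preceding results directly. First I would invoke Lemma~\ref{lemboundarymaphomotopictoidentity}, whose hypotheses are exactly what we have assumed, to conclude both that $F(\partial I^n) \subseteq \partial I^n$ and that $\restr{F}{\partial I^n}\colon \partial I^n \to \partial I^n$ is homotopic to the identity map on $\partial I^n$.

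Next, since homotopic maps induce the same homomorphism on homology, the endomorphism $(\restr{F}{\partial I^n})_{*}\colon \tilde{H}_{n-1}(\partial I^n) \to \tilde{H}_{n-1}(\partial I^n)$ coincides with the identity. The reduced $(n-1)$-homology of $\partial I^n \cong S^{n-1}$ is $\mathbb{Z}$ (for $n \geq 1$), so this identity map is a non-trivial endomorphism. Thus $F$ satisfies the hypotheses of Proposition~\ref{propabstractFsurjective}, from which surjectivity follows immediately.

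There is no real obstacle; both the work of arranging the boundary behaviour and the work of deducing surjectivity from non-triviality on $\tilde{H}_{n-1}(\partial I^n)$ have already been done in the two cited results, so this lemma serves only to record their composition in a form directly applicable to the map $F = \Psi \circ \gamma$ constructed in~\eqref{defnF}. The only caveat worth mentioning is the degenerate case $n = 0$, which can be disregarded since the enclosing argument concerns hyper-curves of positive dimension; for $n \geq 1$ the sphere $S^{n-1}$ has non-trivial reduced $(n-1)$-homology and the argument goes through.
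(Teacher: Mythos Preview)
Your proof is correct and follows essentially the same approach as the paper: invoke Lemma~\ref{lemboundarymaphomotopictoidentity} to get that $\restr{F}{\partial I^n}$ is homotopic to the identity, use that $\tilde{H}_{n-1}(\partial I^n)\cong\mathbb{Z}$ so the induced endomorphism is non-trivial, and then apply Proposition~\ref{propabstractFsurjective}. Your remark on the degenerate case $n=0$ is a harmless addition not present in the paper.
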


\begin{proof}
	By Lemma~\ref{lemboundarymaphomotopictoidentity}, $F(\partial I^{n})\subseteq \partial I^{n}$, and $\restr{F}{\partial I^{n}}\colon \partial I^{n}\rightarrow \partial I^{n}$ is homotopic to the identity on $\partial I^{n}$ and therefore induces the identity on $\tilde{H}_{n-1}(\partial I^{n})$. However, $\partial I^{n}$ is homotopic to $\mathbb{S}^{n-1}$ which has $\tilde{H}_{n-1}(\mathbb{S}^{n-1}) \cong \mathbb{Z}$ which is non-trivial. Hence, the endomorphism of $\tilde{H}_{n-1}(\partial I^{n})$ induced by $\restr{F}{\partial I^{n}}\colon \partial I^{n}\rightarrow \partial I^{n}$ 
	must be 
	 non-trivial.
	
	We have verified that $F$ satisfies the conditions to apply Proposition~\ref{propabstractFsurjective} allowing us to conclude that $F$ is surjective.
\end{proof}

We now have the requisite tools to prove Lemma~\ref{lemcurvesarebig}.

\begin{proof}[Proof of Lemma~\ref{lemcurvesarebig}]
	From the discussion earlier, we have an induced continuous map $F$, see~\eqref{defnF}, which maps faces to faces, and is, therefore, surjective by Lemma~\ref{lemselfcubemapsurjective}. This forces the product of projections $\Psi$, see~\eqref{defnPsi}, to be surjective as well.
	Thus, if $\mathcal{H}^{n}$ denotes the $n$-dimensional Hausdorff measure, we observe
	\[
	\mathcal{H}^{n}(I^{n})=\mathcal{H}^{n}(\Psi(Y))\leq \left(\lambda\sqrt{n}\right)^{n} \mathcal{H}^{n}(Y),
	\]
	as $\Psi$ is $(\lambda\sqrt{n})$-Lipschitz by lemma~\ref{lemPsiLipschitz}. This allows us to conclude that the Hausdorff $n$-measure of $Y$ is at least $\mathcal{H}^{n}(I^{n})/(\lambda\sqrt{n})^{n}$, which is strictly positive because, on $\mathbb{R}^{n}$, $\mathcal{H}^{n}$ is proportional to Lebesgue measure, see~\cite[Page 56]{Mattila}.
\end{proof}

Now, onto the `spread out' property;
Let $\mu_{C}$ be the probability measure on $C$ defined as the weak* limit of measures on the covers $\bigcup_{j=1}^{2^{i}}I_{i,j}$ defined by letting the measure of each interval $I_{i,j}$ be $1/2^{i}$. 
This induces a measure, $\mu_{\Gamma}$, on $\Gamma$ by
\begin{equation}\label{defnmugamma}
\mu_{\Gamma}(S) \coloneqq \mu_{C}(\{x\in C\mid f\circ \gamma_{x} \in S\}),
\end{equation}
for any $S \subseteq \Gamma$. We claim $\mu_{\Gamma}$ has a `spread out' property in the form of the following lemma.

\begin{lem}\label{lemcurvesarespread}
	There exists $A>0$ such that, for any $U\subseteq Y$,
	\[
	\mu_{\Gamma}\left(\{\gamma\in\Gamma\mid\gamma\cap U\neq \emptyset \}\right)\leq A\diam(U)^{\log{2}/\alpha\log{3}}.
	\]
\end{lem}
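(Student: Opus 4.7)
The plan is to translate the problem from $Y$ back to $C$ using the definition of $\mu_\Gamma$, and then exploit the (Ahlfors) regularity of the natural measure on the $1/3$-Cantor set. Unravelling the definition~\eqref{defnmugamma}, we see
\[
\mu_\Gamma(\{\gamma\in\Gamma\mid\gamma\cap U\neq\emptyset\}) = \mu_C(E), \quad \text{where } E \coloneqq \pi_C\bigl(f^{-1}(U)\bigr),
\]
and $\pi_C\colon C\times I^n \to C$ is the coordinate projection. Indeed, $f\circ\gamma_x$ meets $U$ iff there exists $t\in I^n$ with $f(x,t)\in U$, i.e.\ iff $x\in \pi_C(f^{-1}(U))$. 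So the lemma reduces to controlling $\mu_C(E)$ in terms of $\diam(U)$.

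Next, I would bound $\diam(E)$ using the bi-H\"older lower bound on $f$. For any $x,x'\in E$ there exist $t,t'\in I^n$ with $f(x,t),f(x',t')\in U$, so
\[
\tfrac{1}{\lambda}d_{C\times I^n}\bigl((x,t),(x',t')\bigr)^{\alpha} \leq d_Y\bigl(f(x,t),f(x',t')\bigr)\leq \diam(U).
\]
Since the $\ell^2$ projection $\pi_C$ is $1$-Lipschitz, this gives $d(x,x')\leq(\lambda\diam(U))^{1/\alpha}$, hence $\diam(E)\leq(\lambda\diam(U))^{1/\alpha}$.

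Finally, I would use the Ahlfors regularity of the standard Cantor measure: I claim $\mu_C(E)\leq 4\diam(E)^{\log 2/\log 3}$ for every $E\subseteq C$. The verification (the only substantive calculation) uses that consecutive level-$i$ intervals $I_{i,j}$ have length $1/3^i$ and are separated by a gap of at least $1/3^i$: if $\diam(E)=r$ and we pick $i$ with $1/3^{i+1}<r\leq 1/3^i$, then $E$ meets at most two intervals $I_{i,j}$, each of $\mu_C$-mass $2^{-i}$, and substituting the bound on $i$ gives the stated estimate. Combining this with the diameter bound for $E$ yields
\[
\mu_\Gamma(\{\gamma\in\Gamma\mid\gamma\cap U\neq\emptyset\}) \leq 4(\lambda\diam(U))^{\log 2/(\alpha\log 3)} = A\,\diam(U)^{\log 2/(\alpha\log 3)},
\]
with $A=4\lambda^{\log 2/(\alpha\log 3)}$. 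The only place that needs care is the Ahlfors regularity step, and the degenerate case $\diam(U)=0$, which is harmless because $f$ is injective (so $E$ is at most a point and $\mu_C$ has no atoms).
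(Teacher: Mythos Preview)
Your proof is correct and follows essentially the same route as the paper: translate via~\eqref{defnmugamma} to $\mu_C(\pi_C(f^{-1}(U)))$, bound the diameter of this projected set using the bi-H\"older lower bound and the $1$-Lipschitz projection, then apply the Ahlfors $\log 2/\log 3$-regularity of $\mu_C$. The only cosmetic difference is that the paper cites the regularity as a black box for closed balls (then encloses $E$ in a ball of twice the diameter), whereas you supply a short direct argument for arbitrary subsets; the resulting constants differ but the logic is identical.
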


\begin{proof}
	We start by noting a similar upper bound for $\mu_{C}$. The measure $\mu_{C}$ on $C$ is ``Ahlfors $\log(2)/\log(3)$-regular'', see~\cite[Theorem 1.14]{Falconergeometryoffractalsets}, in particular, there exists $\nu>0$ such that for any closed ball $\overbar{B}$ in $C$,
	\begin{equation}\label{eqCmassdistributed}
	\mu_{C}\left(\overbar{B}\right)\leq \nu\diam\left(\overbar{B}\right)^{\log(2)/\log(3)}.
	\end{equation}

		Let $\phi\colon C\times I^{n}\rightarrow C$, defined by $\left(x,\underline{t}\right)\mapsto x$, be the projection of $C\times I^{n}$ onto $C$, and observe that $\phi$ is $1$-Lipschitz. 
	Now, for any subset $U\subseteq Y$ pick $y\in U$; we can assume $U$ is non-empty as the inequality trivially holds for empty $U$. Let $\mathcal{I}_{U}\coloneqq\{x\in C\mid f(\{x\}\times I^{n})\cap U \neq \emptyset \}\subseteq C$, and note that 
	\begin{equation}\label{eqSU}
	\mathcal{I}_{U} =\phi(f^{-1}(U)).
	\end{equation}
	The $1$-Lipschitz property of $\phi$ combined with the bi-H\"older inequalities for $f$ tells us that
	\begin{equation}\label{eqphifisbiholder}
	\diam(\mathcal{I}_{U})\leq\diam(f^{-1}(U))\leq \left(\lambda\diam(U)\right)^{1/\alpha}.
	\end{equation}
	Further, if $\overbar{B}$ is the closed ball in $C$ of radius $\diam(\mathcal{I}_{U})$ centred at $\phi(f^{-1}(y))$, then $\mathcal{I}_{U}\subseteq\overbar{B}$ and
	\begin{equation}\label{eqclosedballapprox}
	\diam\left(\overbar{B}\right)\leq 2\diam(\mathcal{I}_{U}).
	\end{equation}
	Therefore, we can conclude
	\begin{align*}
	\mu_{\Gamma}\left(\{\gamma\in\Gamma\mid\gamma\cap U\neq \emptyset \}\right)&=\mu_{C}(\mathcal{I}_{U}) &&\text{by~\eqref{defnmugamma} and~\eqref{eqSU},}\\
	&\leq \mu_{C}\left(\overbar{B}\right) &&\text{as $\mathcal{I}_{U}\subseteq B$,}\\
	&\leq\nu\diam\left(\overbar{B}\right)^{\log 2/\log 3} &&\text{by~\eqref{eqCmassdistributed},}\\
	&\leq 2^{\log(2)/\log(3)}\nu \diam(\mathcal{I}_{U}) &&\text{by~\eqref{eqclosedballapprox},}\\
	&\leq A\diam(U)^{\log{2}/\alpha\log{3}} &&\text{by~\eqref{eqphifisbiholder},}
	\end{align*}
	where $A= 2^{\log{2}/\log{3}}\nu\lambda^{\log{2}/\alpha\log{3}}$. 
\end{proof}
We now know something about arbitrary decompositions of each $\gamma\in\Gamma$ via Lemma~\ref{lemcurvesarebig}, and something about how decompositions of $Y$ interact with $\Gamma$ via Lemma~\ref{lemcurvesarespread}. We introduce the following notation for indicator functions, as they will be useful for converting decompositions of $Y$ to decompositions for $\gamma\in\Gamma$, which is how we shall link these two ideas. For any $U\subseteq Y$ and $\gamma\in\Gamma$, define
\[
\mathbbm{1}_{U}(\gamma) = \begin{cases}
1 &\text{ if }\gamma\cap U\neq \emptyset,\\
0 &\text{ otherwise}.
\end{cases}
\]

We now have sufficient tools to prove Theorem~\ref{propcantorcrosshypercube}.

\begin{proof}[Proof of Theorem~\ref{propcantorcrosshypercube}]
	Let $B>0$ be as in Lemma~\ref{lemcurvesarebig}.
	Observe, as $\mu_{\Gamma}$ is a probability measure,
	\begin{align*}
	B &= \int_{\Gamma} B \diff \mu_{\Gamma}(\gamma), \\
	\intertext{so, for any decomposition $\mathcal{U}$ of $Y$, by Lemma~\ref{lemcurvesarebig},}
	&\leq \int_{\Gamma} \sum_{U\in\mathcal{U}}\diam(U)^{n} \mathbbm{1}_{U}(\gamma) \diff \mu_{\Gamma}(\gamma),\\
	&= \sum_{U\in\mathcal{U}} \diam(U)^{n}\int_{\Gamma} \mathbbm{1}_{U}(\gamma)\diff \mu_{\Gamma}(\gamma),\\
	\intertext{then, by Lemma~\ref{lemcurvesarespread}, there exists some $A>0$ such that,}
	&\leq \sum_{U\in\mathcal{U}}\diam(U)^{n} A\diam(U)^{\log{2}/\alpha\log{3}},\\
	&= A\sum_{U\in\mathcal{U}} \diam(U)^{n+\log 2/\alpha\log 3}.
	\end{align*}
	Hence,
	\[
	\sum_{U\in\mathcal{U}} \diam(U)^{n+\log 2/\alpha\log 3} \geq \frac{B}{ A}>0,
	\]
	and therefore, the Hausdorff dimension of $Y$ is at least $n+\log2/\alpha\log 3>n$ as $\mathcal{U}$ was arbitrary.

To see that the H\"older dimension of $C\times I^{n}$ is equal to $n$, we use Corollary~\ref{cor1}. The compactness of  $C\times I^{n}$ comes from being a product of compact spaces. Considering $C\times I^{n}$ as a subspace of $\mathbb{R}^{n+1}$, we see that it is doubling directly from the doubling property of $\mathbb{R}^{n+1}$. Finally, $C\times I^{n}$ has capacity dimension $n$ by Proposition~\ref{propcapdimCtimesIis1} below.
\end{proof}
We now present a proof of the capacity dimension result for $C\times I^{n}$ used above.
\begin{prop}\label{propcapdimCtimesIis1}
	$C\times I^{n}$ has capacity dimension $n$.
\end{prop}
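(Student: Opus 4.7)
The plan is to prove both inequalities for the capacity dimension of $C \times I^n$.

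For the lower bound, I use that an $(n+1)$-coloured cover has multiplicity at most $n+1$, so the capacity dimension is always an upper bound for the topological dimension. Since $C \times I^n$ contains a copy of $I^n$ (e.g.\ $\{x_0\} \times I^n$) as a closed subspace, its topological dimension is at least $n$, hence so is its capacity dimension.

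For the upper bound, given a small $\delta > 0$, I will construct an $(n+1)$-coloured open cover of $C \times I^n$ with mesh at most $\delta$ and Lebesgue number bounded below by a fixed multiple of $\delta$, by taking a product of good covers on the two factors. On the Cantor factor, let $\mathcal{U}_C = \{I_{k,i} \cap C\}_{i=1}^{2^k}$, where $k$ is the smallest integer with $(1/3)^k \leq \delta/\sqrt{2}$; these sets are clopen and pairwise disjoint in $C$, so form a $1$-coloured cover of mesh at most $\delta/\sqrt{2}$. Because adjacent level-$k$ intervals of $[0,1]$ are separated by at least $(1/3)^k$, the Lebesgue number of $\mathcal{U}_C$ exceeds $\delta/(3\sqrt{2})$. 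On the cube factor, take an $(n+1)$-coloured open cover $\mathcal{U}_I$ of $I^n$ with mesh at most $\delta/\sqrt{2}$ and Lebesgue number at least $\sigma_n \delta$ for some $\sigma_n > 0$ depending only on $n$; this exists by Theorem~\ref{BLcor} applied to $I^n$, which is compact and locally self-similar with topological dimension $n$.

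Then the product cover $\mathcal{U} = \{U \times V : U \in \mathcal{U}_C,\, V \in \mathcal{U}_I\}$ does the job. Colour $U \times V$ by the colour of $V$: if $U_1 \times V_1$ and $U_2 \times V_2$ share a colour, the factors are each disjoint-or-equal (the $V_i$'s by the colouring of $\mathcal{U}_I$, the $U_i$'s by pairwise-disjointness of $\mathcal{U}_C$), so the two products are themselves disjoint-or-equal, giving an $(n+1)$-colouring. The $\ell^2$-mesh is at most $\sqrt{(\delta/\sqrt{2})^2 + (\delta/\sqrt{2})^2} = \delta$, and for $r = \delta \min\{1/(3\sqrt{2}),\, \sigma_n\}$ the ball $B_{\ell^2}((x,t), r)$ sits inside $B_C(x,r) \times B_I(t,r) \subseteq U \times V$ for appropriate $U, V$, by the Lebesgue number properties of each factor cover.

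The main obstacle is supplying the $(n+1)$-coloured cover of $I^n$ with a uniform Lebesgue-to-mesh ratio across scales; everything else is a routine verification of product-metric geometry. I would either cite this via Theorem~\ref{BLcor} as above, or exhibit it explicitly by intersecting slightly shifted dyadic cube tilings of $\mathbb{R}^n$ with $I^n$ to obtain an open refinement whose multiplicity and Lebesgue-to-mesh ratio depend only on $n$.
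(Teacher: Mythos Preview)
Your argument is correct and follows the same broad outline as the paper's: the lower bound via $\{x_0\}\times I^n$ is identical, and the upper bound in both cases comes from combining a $1$-coloured (i.e.\ capacity-dimension-$0$) cover of $C$ with an $(n+1)$-coloured cover of the cube factor.

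The organisation differs in where the work goes. The paper cites the general product inequality for capacity dimension (Lemma~\ref{lemcapdimofproduct}, from Buyalo--Schroeder), then reduces to two elementary one-dimensional computations---capacity dimension of $C$ is $0$ and of $I$ is $1$---and gets $I^n$ by induction on $n$ through the product lemma. You instead build the product cover by hand, verifying the $(n+1)$-colouring, mesh, and Lebesgue number directly in the $\ell^2$ product metric, but import the hard step (the cube has capacity dimension $n$) wholesale from Theorem~\ref{BLcor}. Your route avoids quoting the general product lemma but leans on a heavier black box for $I^n$; the paper's route is the reverse. Both are perfectly valid, and in fact your explicit product-cover verification is essentially a special case of how Lemma~\ref{lemcapdimofproduct} is proved.
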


For this proposition, we utilise the following lemma, which is Theorem 9.5.1 in~\cite{Elementsofasymptoticgeometry}. The interested reader should note that Buyalo and Schroeder refer to capacity dimension as ``$\ell$-dimension'' in this source.
\begin{lem}\label{lemcapdimofproduct}
	For any metric spaces $X_{1}$ and $X_{2}$, the capacity dimension of $X_{1}\times X_{2}$ is at most the sum of the capacity dimensions of $X_{1}$ and $X_{2}$.
\end{lem}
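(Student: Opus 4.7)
The plan is to establish $\textnormal{capdim}(C \times I^n) = n$ by proving matching upper and lower bounds, using Lemma~\ref{lemcapdimofproduct} for the upper bound and monotonicity properties for the lower bound.

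For the upper bound, I would first observe that both $C$ and $I^n$ are compact, locally self-similar metric spaces (the $1/3$-Cantor set is self-similar by construction via middle-thirds, and $I^n$ is trivially locally self-similar via affine rescalings). Hence Theorem~\ref{BLcor} applies, and their capacity dimensions coincide with their topological dimensions: $\textnormal{capdim}(C) = \dim(C) = 0$ and $\textnormal{capdim}(I^n) = \dim(I^n) = n$. Plugging these into Lemma~\ref{lemcapdimofproduct} gives $\textnormal{capdim}(C \times I^n) \leq 0 + n = n$, as required.

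For the lower bound, I would argue via topological dimension. Recall from the definition that every $(n+1)$-coloured cover witnessing capacity dimension also has multiplicity at most $n+1$, so topological dimension is always a lower bound for capacity dimension: $\dim(X) \leq \textnormal{capdim}(X)$ for every metric space $X$. Now $C \times I^n$ contains $\{x_0\} \times I^n$ as a closed subspace (for any $x_0 \in C$), and this subspace is homeomorphic to $I^n$, which has topological dimension $n$. Since topological (covering) dimension is monotone under closed subspaces in separable metric spaces, we conclude $\dim(C \times I^n) \geq \dim(I^n) = n$, and therefore $\textnormal{capdim}(C \times I^n) \geq n$.

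Combining the two inequalities yields the result. There is no real obstacle here: the only subtlety is making sure to invoke the two standard facts correctly (capacity dimension dominates topological dimension, and topological dimension is monotone under closed subspaces of metric spaces). These are both standard and can be cited without further work, so the proof is short and essentially a bookkeeping argument built on top of Lemma~\ref{lemcapdimofproduct} and Theorem~\ref{BLcor}.
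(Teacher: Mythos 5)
Your proposal does not actually prove the statement in question. The statement is Lemma~\ref{lemcapdimofproduct}: for \emph{arbitrary} metric spaces $X_{1}$ and $X_{2}$, the capacity dimension of $X_{1}\times X_{2}$ is at most the sum of the capacity dimensions of the factors. What you have written is instead a proof of Proposition~\ref{propcapdimCtimesIis1} (that $C\times I^{n}$ has capacity dimension $n$), and your upper bound there is obtained precisely by \emph{invoking} Lemma~\ref{lemcapdimofproduct}. As an argument for the lemma itself this is circular: you assume the very subadditivity statement you were asked to establish, and nothing in your write-up addresses the actual content of the lemma, namely how to turn coloured covers of two general spaces into a coloured cover of their product.

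A genuine proof would have to take, for a given small $\delta$, an $(n_{1}+1)$-coloured cover of $X_{1}$ and an $(n_{2}+1)$-coloured cover of $X_{2}$ at comparable scales (mesh at most $\delta$, Lebesgue number at least a fixed fraction of $\delta$) and produce from them an $(n_{1}+n_{2}+1)$-coloured cover of $X_{1}\times X_{2}$ with controlled mesh and Lebesgue number. This is not automatic: the naive product cover $\{U\times V\}$ is only $(n_{1}+1)(n_{2}+1)$-coloured, so a more careful combinatorial construction is needed (this is the content of Theorem 9.5.1 of Buyalo--Schroeder, which the paper cites rather than reproves). So the correct options are either to cite that result, as the paper does, or to supply that construction; your proposal does neither. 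Separately, your appeals to Theorem~\ref{BLcor} and to monotonicity of topological dimension under closed subspaces are fine as far as they go, but they belong to the proof of Proposition~\ref{propcapdimCtimesIis1} (where the paper in fact argues the upper bounds for $C$ and $I$ by exhibiting explicit covers rather than via local self-similarity), not to the lemma you were asked to prove.
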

\begin{proof}[Proof of Proposition~\ref{propcapdimCtimesIis1}]
	Note that $C\times I^{n}$ contains a copy of $I^{n}$ as $\{0\}\times I^{n}$, and therefore has topological dimension at least $n$. As topological dimension is a lower bound to capacity dimension, we observe that $C\times I^{n}$ also has capacity dimension at least $n$.
	Therefore, using Lemma~\ref{lemcapdimofproduct}, it only remains to check that the capacity dimensions of $C$ and $I^{n}$ are at most $0$ and $n$ respectively.
	
	For $C$, take any $0<\delta\leq 1$ and let $n\in\mathbb{N}$ such that $1/3^{n}\leq\delta<1/3^{n-1}$. The cover $\{I_{n,i}\cap C\mid 1\leq i\leq 2^{n}\}$ has mesh at most $1/3^{n}\leq \delta$, multiplicity $1$, and Lebesgue number at least $1/3^{n}> \delta/3$. Therefore, $C$ has capacity dimension at most $0$.
	
	For $I^{n}$, we show that $I$ has capacity dimension at most $1$, then inductively use Lemma~\ref{lemcapdimofproduct} to prove that $I^{n}$ has capacity dimension at most $n$.
	
	For $I$, take any $0<\delta\leq 1$. The cover of $I$ by balls of radius $\delta/2$ centred at $n\delta/2$, for $n\in \mathbb{N}$ and $0\leq n\leq (2/\delta)+1$,
	has mesh at most $\delta$, multiplicity $2$, and Lebesgue number at least $\delta/4$. Therefore, $I$ has capacity dimension at most $1$.
\end{proof}

\section{Capacity dimension versus topological dimension}\label{sectioncapvstop}

Theorem~\ref{thmmain} shows that the H\"older dimension of a compact, doubling space is at most its capacity dimension. However, as topological dimension is a more commonly used notion of dimension, one could ask if H\"older dimension is, in fact, at most the space's topological dimension, extending the self-similar case. In this section, we provide an example of a compact, doubling space which has topological dimension $0$ but H\"older dimension $1$, proving that ``capacity dimension'' cannot be replaced with ``topological dimension'' in Theorem~\ref{thmmain}.

\begin{thm}[Theorem~\ref{thmdimtopwontdo}]
	Let $X$ be the Cantor set defined in Section~\ref{sectionnotationforcantorsets} where the diameter of the gaps, $\diam(J_{n,i})$, is taken to be $\frac{1}{10n^{n}}$ for all $n\geq 1$ and $1\leq i\leq 2^{n-1}$. Then $X$ has H\"older dimension equal to $1$.
\end{thm}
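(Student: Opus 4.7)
The proof splits as $\holdim(X) \leq 1$ (routine) and $\holdim(X) \geq 1$ (the substance). For the upper bound: $X \subset [0,1]$ gives $\dim_{H}(X) \leq 1$ automatically, and a direct computation bounds the total length of removed gaps,
\[
\sum_{n=1}^{\infty} 2^{n-1} \cdot \frac{1}{10 n^{n}} = \frac{1}{10} + \frac{1}{20} + \frac{4}{270} + \cdots < 1,
\]
so $\mathcal{L}(X) > 0$, whence $\dim_{H}(X) = 1$. Since $X$ is trivially H\"older equivalent to itself, $\holdim(X) \leq 1$.

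For the lower bound, let $f\colon X \to Y$ be any bi-H\"older homeomorphism with constants $\lambda, \alpha \geq 1, \beta \leq 1$. The direct bi-H\"older estimate gives only $\dim_{H}(Y) \geq \dim_{H}(X)/\alpha = 1/\alpha$, which is insufficient when $\alpha > 1$. The improvement exploits the structural property of $X$ emphasised in the introduction: since $d_{n} \leq 2^{-n}$ while $g_{n} = 1/(10 n^{n})$, the gap-to-scale ratio satisfies $g_{n}/d_{n} \leq (2/n)^{n}$, which decays faster than any polynomial in $d_{n}$. The strategy is to show $\mathcal{H}^{s}(Y) > 0$ for every $s < 1$, and then let $s \to 1^{-}$.

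Concretely, given any cover $\{V_{i}\}$ of $Y$ by small-diameter sets, pull back to the cover $\{U_{i} = f^{-1}(V_{i})\}$ of $X$, where the lower H\"older bound yields $\diam(U_{i}) \leq (\lambda \diam(V_{i}))^{1/\alpha}$. The key observation is that each $U_{i}$ is confined to a single level-$m(U_{i})$ interval of $X$, with $m(U_{i})$ the largest integer satisfying $g_{m(U_{i})} > \diam(U_{i})$; the super-polynomial gap decay forces $m(U_{i}) \sim \log(1/\diam U_{i})/\log\log(1/\diam U_{i})$ to grow as $\diam(U_{i}) \to 0$. Within each such level-$m$ interval $I_{m,j}$, the near-full Lebesgue density $\mathcal{L}(X \cap I_{m,j})/d_{m} \to 1$ gives, via the mass-distribution inequality $\sum_i \mathcal{L}(U_i) \geq \mathcal{L}(X)$, scale-by-scale control analogous to the Lipschitz case.

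The main obstacle is the multi-scale bookkeeping required to convert this into an $s$-content bound on the $V_{i}$ side. At each level $m$, the H\"older distortion introduces a polynomial correction of order $d_{m}^{\alpha-1}$ relative to a Lipschitz estimate; the plan is to show this correction is dominated, for $m$ beyond a threshold depending on $s$ and $\alpha$, by the super-polynomial quantity $(g_{m}/d_{m})^{1/\alpha}$, so that the effective exponent in the cover-sum estimate is $1 + o_{m}(1)$ rather than $\alpha$. Summing over the hierarchy of nested level-$m$ intervals and using the near-$1$ Lebesgue density at each scale then yields a uniform bound $\sum \diam(V_{i})^{s} \geq c_{s} > 0$ for every $s < 1$, whence $\dim_{H}(Y) \geq 1$. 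Since this holds for every bi-H\"older $Y$, $\holdim(X) \geq 1$, completing the proof.
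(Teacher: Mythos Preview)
Your upper bound is fine and matches the paper exactly: the total length removed is strictly less than $1$, so $\mathcal{H}^{1}(X)>0$ and $\dim_{H}(X)=1$.

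The lower bound, however, is only a sketch, and the part you flag as ``the main obstacle'' is precisely the part that carries the proof. Pulling a cover $\{V_{i}\}$ of $Y$ back to $\{U_{i}=f^{-1}(V_{i})\}$ gives only $\diam(U_{i})\leq(\lambda\diam V_{i})^{1/\alpha}$, equivalently $\diam(V_{i})^{s}\geq\lambda^{-s}\diam(U_{i})^{s\alpha}$. To conclude $\sum\diam(V_{i})^{s}\geq c_{s}$ you would need control of $\sum\diam(U_{i})^{s\alpha}$ with $s\alpha$ possibly much larger than $1$, and no density statement about $X$ supplies this: the Lebesgue mass distribution $\sum\mathcal{L}(U_{i})\geq\mathcal{L}(X)$ together with $\mathcal{L}(U_{i})\leq\diam(U_{i})$ gives back only $\dim_{H}(Y)\geq 1/\alpha$, and the near-full density $\mathcal{L}(X\cap I_{m,j})/d_{m}\to 1$ does not change that exponent. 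Your paragraph about a ``polynomial correction of order $d_{m}^{\alpha-1}$'' being ``dominated by $(g_{m}/d_{m})^{1/\alpha}$'' does not name which inequality is being corrected or how the correction enters the sum, so as written there is no argument to check.

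The paper avoids the cover-pullback bookkeeping entirely by passing to $\mathbb{R}$. It composes $f$ with the $1$-Lipschitz projection $\psi\colon Y\to\mathbb{R}$, $\psi(y)=d_{Y}(f(0),y)$, then uses McShane's theorem to extend $\psi\circ f$ to a $(\lambda,\beta)$-H\"older map $F\colon[0,1]\to\mathbb{R}$. The lower H\"older bound on $f$ gives $|F(0)-F(x)|\geq\lambda^{-1}3^{-\alpha k}$ for the right endpoint $x$ of $I_{k,1}$, so by the Intermediate Value Theorem $F(I_{k,1})$ contains an interval of that length. On the other hand, the upper H\"older bound gives $\diam F(J_{n,j})\leq\lambda(10n^{n})^{-\beta}$, and summing over the gaps inside $I_{k,1}$ yields a total at most $\lambda\cdot 10^{-\beta}(k+1)^{-\beta k}$ for large $k$. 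Since $(k+1)^{\beta k}$ eventually dominates $3^{\alpha k}$, the gap-images cannot fill $F(I_{k,1})$, and one reads off $\mathcal{H}^{1}(Y)>0$ directly. This both sidesteps your multi-scale issue and gives the endpoint $s=1$ rather than an approach through $s<1$.
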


As $X$ is a Cantor set, it has topological dimension $0$ and is compact. It is doubling as it is a subspace of $\mathbb{R}$ which is doubling. Therefore, to accomplish the goal stated above, we need only prove this theorem.

The main idea is, by making the gaps shrink fast enough that they cannot account for all the Hausdorff $1$-measure in $I$, we've forced $X$ to have Hausdorff dimension $1$.
Furthermore, as the shrinking is faster than any fixed power of $n$, no H\"older equivalence can find an equivalent space $Y$ without this `fast-shrinking gap' property, meaning any equivalent space will also have Hausdorff dimension $1$.

Our construction of $X$ allows us to choose the diameters for the gaps, $J_{n,i}$, but leaves the diameters of $I_{n,i}$ implicit. Investigation of the construction gives us the following easy, but useful, bound.

\begin{lem}\label{lemintervalsbetweenthirdandhalf}
	\[
	\frac{1}{3^{n}}\leq\diam(I_{n,i}),
	\]
	for every $n\in\mathbb{N}$ and $1\leq i\leq 2^{n}$.
\end{lem}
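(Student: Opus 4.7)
The plan is a straightforward induction on $n$, exploiting the explicit recurrence implicit in the construction: since $J_{n+1,i}$ is the middle open subinterval of $I_{n,i}$, the two children satisfy
\[
\diam(I_{n+1,2i-1}) = \diam(I_{n+1,2i}) = \tfrac{1}{2}\bigl(\diam(I_{n,i}) - \diam(J_{n+1,i})\bigr) = \tfrac{1}{2}\bigl(\diam(I_{n,i}) - \tfrac{1}{10(n+1)^{n+1}}\bigr).
\]

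The base case $n=0$ is immediate as $\diam(I_{0,1}) = 1 = 1/3^0$. For the inductive step, assuming $\diam(I_{n,i}) \geq 1/3^n$ for all $i$, I would substitute into the recurrence and reduce the desired bound $\diam(I_{n+1,\cdot}) \geq 1/3^{n+1}$ to the purely numerical inequality
\[
\frac{1}{3^{n}} - \frac{1}{10(n+1)^{n+1}} \geq \frac{2}{3^{n+1}},
\]
which rearranges to $10(n+1)^{n+1} \geq 3^{n+1}$.

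This last inequality is the only content: I would verify it by hand for $n=0,1$ (giving $10 \geq 3$ and $40 \geq 9$), and for $n \geq 2$ observe that $n+1 \geq 3$ yields $(n+1)^{n+1} \geq 3^{n+1}$, so $10(n+1)^{n+1} \geq 3^{n+1}$ with plenty of room to spare. There is no real obstacle here — the only thing to be careful about is that the induction hypothesis is just barely strong enough (one loses a factor of $2$ at each step, and the bound $1/3^n$ is precisely the fixed point of the halving operation up to the negligible gap-size correction), so the argument reduces to showing the cumulative gap sizes do not eat away more than the slack between halving by $2$ and dividing by $3$.
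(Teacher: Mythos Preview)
Your proof is correct and essentially identical to the paper's: both are inductions that reduce to the numerical inequality $\frac{1}{10(n+1)^{n+1}} \leq \frac{1}{3^{n+1}}$, i.e., $10(n+1)^{n+1} \geq 3^{n+1}$. The only cosmetic difference is that the paper phrases the induction as a comparison with the $1/3$-Cantor set (removing a smaller gap at each stage leaves larger intervals), whereas you write out the recurrence explicitly.
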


\begin{proof}
	Let $C$ be the $1/3$-Cantor set as constructed in Section~\ref{sectionnotationforcantorsets}, with intervals $I^{C}_{n,i}$ and gaps $J^{C}_{n+1,i}$. Recall that $\diam(J^{C}_{n+1,i})=1/3^{n+1}$ for all $n\geq 0$ and $1\leq i \leq 2^{n}$. Compare this with how we defined the gaps in $X$ to have diameter $1/10(n+1)^{n+1}$ to see that we cut out at most the middle third of every interval in the construction of $X$;
	\[
	\frac{1}{10(n+1)^{n+1}}\leq \frac{1}{3^{n+1}},
	\]
	for all $n\geq 0$. Therefore, inductively we see that $\diam(I_{n,i})\geq \diam(I^{C}_{n,i})=1/3^{n}$.
\end{proof}

It is helpful to note the following lemma, which is a result of exclusively cutting from the interior of intervals in the construction of $X$.

\begin{lem}\label{lemenpointsarein}
	Endpoints of intervals $I_{n,i}$ lie in $X$.
\end{lem}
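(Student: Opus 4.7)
The plan is to show that any endpoint $p$ of an interval $I_{n,i}$ persists as an endpoint of some interval $I_{m,j}$ for every $m \geq n$, and therefore belongs to the intersection defining $X$.

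First I would classify the endpoints appearing in the construction. For $n=0$, the endpoints of $I_{0,1} = [0,1]$ are just $0$ and $1$. Inductively, the endpoints of $I_{n+1, 2i-1}$ and $I_{n+1, 2i}$ are either endpoints of $I_{n,i}$ itself (the outer endpoints, which are unchanged) or the two endpoints of the removed gap $J_{n+1,i}$ (the inner endpoints). In particular, every endpoint appearing at stage $n+1$ either already appeared as an endpoint at stage $n$, or is a boundary point of some removed open gap.

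Next, I would verify that once a point $p$ becomes an endpoint of some $I_{n,i}$, it remains an endpoint of some $I_{m,j}$ for all $m \geq n$. The crucial input is that each removed $J_{m+1,j} \subset I_{m,j}$ is an \emph{open} interval lying in the interior of $I_{m,j}$, with $\diam(J_{m+1,j}) < \diam(I_{m,j})$. Consequently the endpoints of $I_{m,j}$ are not contained in $J_{m+1,j}$ and hence survive as endpoints of one of $I_{m+1, 2j-1}$ or $I_{m+1, 2j}$. This gives the persistence of $p$ as an endpoint by induction on $m \geq n$.

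Finally, since $p$ lies in $I_{m,j}$ for some $j$ at every stage $m \geq n$, and trivially $p \in I_{k,l}$ for some $l$ for every $k \leq n$ (by going up the containment chain, or more simply because $p \in I_{0,1} = [0,1]$ and the intervals at stage $k$ cover everything reachable by the construction up to stage $n$), we obtain
\[
p \in \bigcap_{k=0}^{\infty} \bigcup_{l=1}^{2^{k}} I_{k,l} = X.
\]
There is no real obstacle here; the only thing to be careful about is using the openness of the gaps $J_{m+1,j}$ so that endpoints are never removed in the cutting procedure.
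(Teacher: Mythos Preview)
Your proof is correct and follows exactly the idea the paper indicates: the paper does not actually write out a proof of this lemma, but simply remarks that it ``is a result of exclusively cutting from the interior of intervals in the construction of $X$,'' which is precisely the persistence-of-endpoints argument you have spelled out.
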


The following is~\cite[Corollary 1]{McShane}.
\begin{lem}\label{lemmcshane}
	Let $S$ be a subset of a metric space $Z$, and let $g\colon S\rightarrow \mathbb{R}$ be a real-valued H\"older continuous function.
	Then $g$ can be extended to $Z$ preserving the H\"older condition.
\end{lem}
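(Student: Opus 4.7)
The plan is to reduce the H\"older case to the classical McShane Lipschitz extension theorem via the ``snowflake'' trick. Suppose $g$ satisfies $|g(s)-g(t)| \leq L\,d(s,t)^{\alpha}$ for all $s,t\in S$, with $\alpha\in(0,1]$ (the standard range of H\"older exponents). Define a new distance function $d'(x,y) \coloneqq d(x,y)^{\alpha}$ on $Z$. I would first verify briefly that $d'$ is itself a metric on $Z$: non-negativity, non-degeneracy, and symmetry are immediate, and the triangle inequality follows from the subadditivity inequality $(a+b)^{\alpha}\leq a^{\alpha}+b^{\alpha}$, which holds for $\alpha\in(0,1]$. Under this re-metrisation, $g$ becomes an $L$-Lipschitz function on $(S,d')$, so it suffices to produce an $L$-Lipschitz extension with respect to $d'$.

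For the Lipschitz extension step, I would invoke (or give a short direct proof of) McShane's classical formula: assuming $S$ is non-empty (the $S=\emptyset$ case is trivial), set
\[
\tilde g(z) \coloneqq \inf_{s\in S}\bigl\{g(s) + L\,d'(z,s)\bigr\}.
\]
The main sub-steps to verify would be: first, that the infimum is finite for each $z\in Z$, seen by anchoring at any fixed $s_{0}\in S$ and using the Lipschitz bound to obtain $g(s) + L\,d'(z,s) \geq g(s_{0}) - L\,d'(z,s_{0})$; second, that $\tilde g$ genuinely extends $g$, which follows from evaluating at $s=z\in S$ for the $\leq$ direction and applying the Lipschitz property for the reverse inequality; and third, that $\tilde g$ is itself $L$-Lipschitz on $(Z,d')$, which follows from a standard two-line manipulation using $d'(x,s)\leq d'(x,y)+d'(y,s)$ inside the infimum together with symmetry in $x$ and $y$.

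Translating back, the resulting $\tilde g$ satisfies $|\tilde g(x)-\tilde g(y)| \leq L\,d'(x,y) = L\,d(x,y)^{\alpha}$ for all $x,y\in Z$, which is precisely the H\"older condition with the original exponent and constant preserved. I do not anticipate any serious obstacles beyond these routine verifications; the only conceptual point is the observation that H\"older continuity with exponent $\alpha\in(0,1]$ coincides with Lipschitz continuity with respect to the snowflaked metric, which converts the lemma into a classical result of McShane. One small bookkeeping issue worth flagging is that the dual formula $\sup_{s\in S}\{g(s)-L\,d'(z,s)\}$ gives another valid extension, so the extension is not unique; either choice suffices for our purposes.
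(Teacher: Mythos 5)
Your argument is correct. Note that the paper does not actually prove this lemma: it simply quotes it as Corollary 1 of McShane's extension paper, so there is no in-text proof to compare against. What you have written is a standard, self-contained derivation of exactly that cited result: snowflake the metric, i.e.\ pass from $d$ to $d'=d^{\alpha}$ (a genuine metric since $(a+b)^{\alpha}\leq a^{\alpha}+b^{\alpha}$ for $\alpha\in(0,1]$), and then apply the McShane infimal-convolution formula $\tilde g(z)=\inf_{s\in S}\{g(s)+L\,d'(z,s)\}$; your three verification steps (finiteness via anchoring at $s_{0}$, the extension property, and the $L$-Lipschitz bound in $d'$) are all sound, and the translation back gives the Hölder condition with the same constant and exponent. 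Equivalently, one can skip the re-metrisation and define $\tilde g(z)=\inf_{s\in S}\{g(s)+L\,d(z,s)^{\alpha}\}$ directly, using subadditivity of $t\mapsto t^{\alpha}$ in place of the triangle inequality for $d'$ --- the two computations are identical. The only caveat is your restriction to exponents $\alpha\in(0,1]$, which the lemma as stated does not make explicit; this is harmless here, since in the application (the map $\psi\circ f$ on the Cantor set $X$) the relevant exponent is the upper bi-Hölder exponent $\beta$, and on a bounded space one may always assume $\beta\leq 1$ at the cost of adjusting the multiplicative constant, which is all the paper needs.
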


We now have sufficient tools to prove the theorem.

\begin{proof}[Proof of Theorem~\ref{thmdimtopwontdo}.]
Note that 
\[
[0,1]\setminus X = \bigcup_{k,j}J_{k,j},
\]
so, for any countable decomposition of $X$, say $\mathcal{A}=\{A_{i}\}_{i\in\mathbb{N}}$, we can extend $\mathcal{A}$ to a decomposition of $[0,1]$ by including all the $J_{k,j}$. Also, using that the Hausdorff $1$-measure of $[0,1]$ is $1$,
\[
\sum_{i} \diam(A_{i}) + \sum_{k,j} \diam(J_{k,j}) \geq \mathcal{H}^{1}([0,1]) = 1,
\]
but
\[
\sum_{k,j} \diam(J_{k,j}) = \sum_{k=1}^{\infty} \sum_{j=1}^{2^{k-1}} \frac{1}{10k^{k}} = \frac{1}{10}\sum_{k=1}^{\infty} \frac{2^{k}}{k^{k}}.  
\]
For $k\geq 4$, $k^{k}\geq 4^{k}= 2^{2k}$ so we see
\[
\sum_{k=1}^{\infty} \frac{2^{k}}{k^{k}} \leq \sum_{k=1}^{3} \frac{2^{k}}{k^{k}} + \sum_{k=4}^{\infty} \frac{2^{k}}{2^{2k}} \leq \sum_{k=1}^{3} \frac{2^{k}}{k^{k}} +\sum_{k=1}^{\infty} \frac{1}{2^{k}} \leq \sum_{k=1}^{3} \frac{2^{k}}{k^{k}} +1 < 5 < \infty.
\]
Hence, $\sum_{k,j} \diam(J_{k,j})<5/10 = 1/2$. Therefore, $\sum_{i} \diam(A_{i})>1/2$. The decomposition $A$ was arbitrary so $\mathcal{H}^{1}(X)\geq 1/2>0$ and hence $\dim_{H}(X) \geq 1$, but $\dim_{H} (X) \leq \dim_{H}([0,1]) = 1$, so together we get that $\dim_{H} = 1$.

Now consider $f\colon X\rightarrow Y$, a $(\lambda,\alpha,\beta)$ bi-H\"older homeomorphism between $X$ and a metric space $Y$. We would like to prove that $\dim_{H} (Y) \geq 1$. To do this, let's reduce to working in $\mathbb{R}$ so that we can extend decompositions to decompositions of intervals, like in the above. Consider $\psi\colon Y \rightarrow \mathbb{R}$ defined by $y\mapsto d_{Y}(f(0),y)$. Note that $\psi$ is a 1-Lipschitz map via the triangle inequality. Hence, we can preserve the upper bound of our H\"older inequality for $f$ when we compose with $\psi$. That is, for any $z_{1},z_{2}\in X$
\begin{equation}\label{equationbadcantorholdercont}
d_{\mathbb{R}}(\psi(f(z_{1})),\psi(f(z_{2}))) \leq d_{Y}(f(z_{1}),f(z_{2}))\leq \lambda d_{X} (z_{1},z_{2})^{\beta}.
\end{equation}

Hence, by Lemma~\ref{lemmcshane} there exists an extension, $F$, of $\psi\circ f$ to $I$ that is $(\lambda ,\beta)$-H\"older continuous too. This extension means we can derive information from the gaps, $J_{k,j}$, too, instead of just from the space $X$. For instance, for any $j,k$,
\[
\diam(J_{k,j}) = \frac{1}{10k^{k}} \implies \diam(F(J_{k,j})) \leq \lambda \left(\frac{1}{10k^{k}}\right)^{\beta}.
\]
We interpret this as the H\"older map, $F$, being unable to break the shrinking property of the gaps.

Consider $I_{k,1}$, which has width at least $1/3^{k}$ by Lemma~\ref{lemintervalsbetweenthirdandhalf}. We know that the smaller endpoint of $I_{k,1}$ is $0$, and let its larger endpoint be $x$, for some $x>0$. By Lemma~\ref{lemenpointsarein}, both $0$ and $x$ lie in $X$ and, therefore, $F$ evaluates to $\psi\circ f$ on them as $F$ is an extension of $\psi\circ f$. Hence, 
\begin{align*}
|F(0)-F(x)|&=|\psi(f(0))-\psi(f(x))|,\\
&= |0-d_{Y}(f(0),f(x))|,\\
&= d_{Y}(f(0),f(x)) \geq \frac{1}{\lambda} d_{X}(0,x)^{\alpha}\geq  \frac{1}{\lambda 3^{\alpha k}}.
\end{align*}
Thus, $F(I_{k,1})$ contains $\left[0,1/(\lambda 3^{\alpha k})\right]$ as a subset, by the Intermediate Value Theorem.

Now, the gaps within $I_{k,1}$ are precisely $J_{i,j}$ where $i\geq k+1$ and $1\leq j\leq 2^{i-(k+1)}$. Each $J_{i,j}$ has diameter $1/10i^{i}$, so the corresponding gap, $F(J_{i,j})$, in the image has diameter at most $\lambda \left(1/(10{i+1}^{i+1})\right)^{\beta}$, by the H\"older continuity of $F$.
Thus, the 1-measure of gaps in the image is controlled as follows;
\begin{align}
\sum_{i,j} \diam(F(J_{i,j})) &\leq \sum_{n=k+1}^{\infty} 2^{n-(k+1)}\lambda \left(\frac{1}{10n^{n}}\right)^{\beta},\nonumber\\
 &\leq \frac{\lambda}{2^{k+1}} \left(\frac{1}{10}\right)^{\beta} \sum_{n=k+1}^{\infty} \left(\frac{2}{(k+1)^{\beta}}\right)^{n},\nonumber \\
 &= \frac{\lambda}{10^{\beta}2^{k+1}} \left(\frac{2}{(k+1)^{\beta}}\right)^{k+1} \frac{(k+1)^{\beta}}{(k+1)^{\beta}-2},\nonumber\\
 &= \frac{\lambda }{10^{\beta}(k+1)^{\beta k}((k+1)^{\beta}-2)}, \nonumber\\
 &\leq \frac{\lambda }{10^{\beta}(k+1)^{\beta k}}.\label{equationsumofgaps1}
\end{align}

The last inequality holds for $(k+1)^{\beta}\geq 3$, which will be true for sufficiently large $k$, because $(k+1)^{\beta} \to \infty$ as $k\to \infty$. Note,
\[
\left(\frac{(k+1)^{\beta}}{3^{\alpha}}\right)^{k}\to\infty,\text{ as }k\to\infty.
\]
Hence, by taking $k$ sufficiently large, we may assume simultaneously;
\begin{itemize}
	\item $(k+1)^{\beta}\geq 3$, and\\
	\item $\left((k+1)^{\beta}/3^{\alpha}\right)^{k}>2\lambda^{2}/10^{\beta}$.
\end{itemize}

The latter is important because it is equivalent to
\begin{equation}\label{equationsumofgaps2}
\frac{\lambda }{10^{\beta}(k+1)^{\beta k}} <\frac{1}{2} \frac{1}{\lambda  3^{\alpha k}},
\end{equation}
which allows us to conclude that gaps cannot account for all the $1$-measure in $Y$. More precisely, for any countable  decomposition, $B=\{B_{j}\}_{j\in\mathbb{N}}$, of $Y$, define $A_{j} = f^{-1}(B_{j})$ for all $j$, and let $A=\{A_{j}\}_{j\in\mathbb{N}}$ be the decomposition of $X$ induced by pulling $B$ back through $f$. Note that $A$  is also a cover for $I_{k,1}\cap X$, and if we add in the gaps contained in $I_{k,1}$, then we have a cover for $I_{k,1}$. Explicitly, $A^{\prime}\coloneqq A \cup \{J_{i,j}\mid i\geq k+1, 1\leq j \leq 2^{i-(k+1)} \}$ covers $I_{k,1}$. Hence, $F(A^{\prime})$ covers $F(I_{k,1})$, and therefore
\[
\sum_{j} \diam (F(A_{j})) +\sum_{i,j} \diam (F(J_{i,j})) \geq \mathcal{H}^{1} (F(I_{k,1})) \geq \frac{1}{\lambda  3^{\alpha k}}.
\]
From equations~\eqref{equationsumofgaps1} and~\eqref{equationsumofgaps2}, for sufficiently large $k=k(\alpha,\beta,\lambda)$ independent of the decomposition $B$, 
\[
\sum_{J_{i,j}\subset I_{k,1}} \diam (F(J_{i,j})) < \frac{1}{2\lambda  3^{\alpha k}}.
\]
Hence,
\[
\sum_{j} \diam (F(A_{j})) \geq \frac{1}{2\lambda  3^{\alpha k}}>0,
\]
for all decompositions $B$. Now, by definition and as $A_{j}\subseteq X$ for all $j$, $F(A_{j}) = \psi \circ f(A_{j}) = \psi (B_{j})$, and $\diam (\psi(B_{j})) \leq \diam (B_{j})$, so
\[
\sum_{j} \diam (B_{j}) \geq \sum_{j} \diam (F(A_{j})) \geq \frac{1}{2\lambda  3^{\alpha k}}.
\]

Hence, $\mathcal{H}^{1}(Y) \geq 1/2\lambda  3^{\alpha k}$ and $\dim_{H} (Y) \geq 1$.
\end{proof}

\section{H\"older dimension can be less than capacity dimension}\label{sectionholdimnotalwayscap}

In this section, we give an example to illustrate that the inequality between capacity dimension and H\"older dimension in Theorem~\ref{thmmain} cannot be upgraded to an equality. That is,

\begin{thm}
	Let $X=\{0\}\cup \{1/n\mid n\in\mathbb{N}\}\subset \mathbb{R}$, then $X$ is a compact, doubling metric space with capacity dimension $1$, but has H\"older dimension $0$.
\end{thm}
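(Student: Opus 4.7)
The plan is to verify four claims: $X$ is compact, $X$ is doubling, $\holdim(X)=0$, and $\cdim(X)=1$. The first three are routine. The set $X$ is closed and bounded in $\mathbb{R}$, hence compact; $X$ inherits the doubling property from the ambient doubling space $\mathbb{R}$; and $X$ is countable so $\dim_H(X)=0$, whence $\holdim(X)\le\dim_H(X)=0$ using that the identity map is a bi-H\"older self-equivalence. The real work lies in pinning down the capacity dimension.

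For the upper bound $\cdim(X)\le 1$, I would restrict to $X$ the $2$-coloured covers of $[0,1]$ built in the proof of Proposition~\ref{propcapdimCtimesIis1}. Intersecting balls of radius $\delta/2$ in $[0,1]$ with $X$ preserves openness, the $2$-colouring, and the mesh bound; the Lebesgue number survives because balls in the subspace $X$ sit inside balls of $[0,1]$ of the same radius, so any Lebesgue witness in $[0,1]$ is also one in $X$. This provides the required covers with Lebesgue number at least $\delta/4$ for all sufficiently small $\delta$.

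The harder direction is $\cdim(X)\ne 0$, which I would prove by contradiction. Suppose there is $\sigma'\in(0,1)$ such that every sufficiently small $\delta$ admits a $1$-coloured (pairwise disjoint) open cover $\mathcal{U}$ with $\text{mesh}(\mathcal{U})\le\delta$ and $\mathcal{L}(\mathcal{U})\ge\sigma'\delta$. Since its open elements are pairwise disjoint and cover $X$, each is also closed, so $\mathcal{U}$ is a partition of $X$ into clopen sets. The key observation is the rigidity of clopen subsets of $X$: each such subset is either a finite subset of $\{1/n:n\in\mathbb{N}\}$, or a cofinite set containing $0$. Hence the unique element $U_0$ containing $0$ has the form $\{0\}\cup\{1/n:n\in S\}$ with $\mathbb{N}\setminus S$ finite. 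Setting $N=\min S$, the mesh bound gives $1/N=\diam(U_0)\le\delta$, forcing $N\ge 1/\delta$. Since $N-1\notin S$, the nearby point $1/(N-1)$ lies in a \emph{different} element of the partition, so
\[
\mathcal{L}(\mathcal{U})\le \mathcal{L}(\mathcal{U},1/N)=d\bigl(1/N,\,X\setminus U_0\bigr)\le \frac{1}{N(N-1)}\le \frac{\delta^{2}}{1-\delta}.
\]
Combined with $\mathcal{L}(\mathcal{U})\ge\sigma'\delta$, this yields $\sigma'\le\delta/(1-\delta)$, which fails as soon as $\delta<\sigma'/(1+\sigma')$, contradicting the existence of the cover. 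The main obstacle is recognising the clopen structure of $X$; once the ``boundary point'' $1/(N-1)$ sitting at distance of order $\delta^{2}$ from $1/N\in U_0$ is isolated, the mismatch between the $\delta$-scale mesh and the $\delta^{2}$-scale Lebesgue number delivers the contradiction.
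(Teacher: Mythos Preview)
Your proof is correct. The first three claims and the upper bound $\cdim(X)\le 1$ match the paper almost verbatim; the interesting difference is in the argument that $\cdim(X)\neq 0$.

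The paper fixes $n$ large, sets $\delta=2/(\sigma n(n-1))$, and uses the Lebesgue number to chain consecutive points: for each $m\ge n$ the pair $\{1/m,1/(m-1)\}$ lies in a common element of the cover, and multiplicity~$1$ forces all these elements to coincide. One then checks $0$ also lands in this common set, giving an element of diameter at least $1/(n-1)>\delta$, a contradiction to the \emph{mesh} bound. You instead invoke the clopen structure of $X$ directly, identify the cofinite piece $U_0\ni 0$, read off $N=\min S$ from the mesh constraint, and then contradict the \emph{Lebesgue number} bound by exhibiting the neighbouring point $1/(N-1)\notin U_0$ at distance $\lesssim\delta^{2}$ from $1/N\in U_0$. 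Both arguments exploit the same quantitative mismatch---gaps of order $1/m^{2}$ versus positions of order $1/m$---but pull in opposite directions: the paper shows the containing set must be too wide, you show its boundary must be too close. Your route is slightly more conceptual (the clopen dichotomy isolates $U_0$ in one stroke rather than by an inductive chain), at the modest cost of needing the small observation that in a pairwise disjoint open cover the Lebesgue number at $1/N$ is realised precisely by $U_0$.
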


\begin{proof}
	Note that $X$ is compact because it is a closed and bounded subspace of $\mathbb{R}$, and $X$ is doubling because it is a subspace of $\mathbb{R}$ which is doubling.
	
	We can easily verify that $X$ has H\"older dimension $0$. Indeed, as countable collections of points have Hausdorff dimension $0$, $X$ has Hausdorff dimension 0. Further, H\"older dimension is non-negative and the Hausdorff dimension of $X$ is an upper bound for its H\"older dimension, so $X$ has H\"older dimension $0$. 
	
	To see that $X$ has capacity dimension $1$, observe that $X$ has capacity dimension at most $1$ as it is a subspace of $\mathbb{R}$, which has capacity dimension $1$. We now prove $X$ has capacity dimension at least $1$ by proving that it does not have capacity dimension $0$.
	
	For a contradiction, assume that $X$ has capacity dimension $0$ with coefficient $\sigma$. Take $n\in\mathbb{N}$ such that $n>\max\{2/\sigma,2\}$, and so that $2/(\sigma n(n-1))>0$ is sufficiently small as to apply the definition of capacity dimension $0$ with $\delta \coloneqq 2/(\sigma n(n-1))$. Let $\mathcal{U}$ be an open cover of $X$ of mesh at most $\delta$, Lebesgue number at least $\sigma\delta$, and multiplicity $1$ as per $X$ having capacity dimension $0$ with coefficient $\sigma$. 
	For any $m\geq n$, we calculate
	\[
	d\left(\frac{1}{m},\frac{1}{m-1}\right) = \frac{m-(m-1)}{m(m-1)} =\frac{1}{m(m-1)}\leq \frac{1}{n(n-1)}.
	\]
	By observing that $1/n(n-1)<2/(n(n-1)) =\sigma\delta$, we see that, for every $m\geq n$, $\{1/m,1/(m-1)\}\subseteq U_{m}$, for some $U_{m}\in \mathcal{U}$, by the Lebesgue number property. From the multiplicity $1$ restriction on $\mathcal{U}$, for any $U,V\in\mathcal{U}$, if $U\cap V \neq \emptyset$, then $U=V$. Hence, $U_{m}=U_{m+1}$ for every $m\geq n$ as $m\in U_{m} \cap U_{m+1}$. Inductively, $m\in U_{n}$ for every $m\geq n-1$. Also, $0\in U_{n}$, as $n(n-1)\geq n-1$ and $d(0,1/(n(n-1)))<\sigma \delta$, so there exists $U_{0}$ in $\mathcal{U}$ containing $0$ and $1/(n(n-1))$, but $1/(n(n-1))\in U_{n}$ so $U_{0}=U_{n}$ too. Therefore, $\diam(U_{n})\geq d(0,1/(n-1))=1/(n-1)>2/(\sigma n(n-1)) =\delta $ as $2/(\sigma n)<1$, by definition of $n$. This contradicts the mesh constraint on $\mathcal{U}$. Therefore, no such $\mathcal{U}$ exists and $X$ cannot have capacity dimension $0$.
\end{proof}


\bibliographystyle{alpha}
\bibliography{References}

\begin{thebibliography}{Pan89b}

\bibitem[Ako06]{Hakobyan}
G.~A. Akopyan.
\newblock Cantor sets that are minimal for quasi-symmetric mappings.
\newblock {\em Izv. Nats. Akad. Nauk Armenii Mat.}, 41(2):5--13, 2006.

\bibitem[BL07]{BuyaloLebadeva}
S.~V. Buyalo and N.~D. Lebedeva.
\newblock Dimensions of locally and asymptotically self-similar spaces.
\newblock {\em Algebra i Analiz}, 19(1):60--92, 2007.

\bibitem[Bou95]{bourdonpolyedreshyperboliques}
M.~Bourdon.
\newblock Au bord de certains poly\`edres hyperboliques.
\newblock {\em Ann. Inst. Fourier (Grenoble)}, 45(1):119--141, 1995.

\bibitem[BS07]{Elementsofasymptoticgeometry}
S.~Buyalo and V.~Schroeder.
\newblock {\em Elements of asymptotic geometry}.
\newblock EMS Monographs in Mathematics. European Mathematical Society (EMS),
  Z\"{u}rich, 2007.

\bibitem[Fal86]{Falconergeometryoffractalsets}
K.~J. Falconer.
\newblock {\em The geometry of fractal sets}, volume~85 of {\em Cambridge
  Tracts in Mathematics}.
\newblock Cambridge University Press, Cambridge, 1986.

\bibitem[Hei01]{heinonen2001lectures}
J.~Heinonen.
\newblock {\em Lectures on Analysis on Metric Spaces}.
\newblock Hochschultext / Universitext. Springer New York, 2001.

\bibitem[HW48]{hurewicz2015dimension}
W.~Hurewicz and H.~Wallman.
\newblock {\em Dimension Theory}.
\newblock Princeton Mathematical Series. Princeton University Press, 1948.

\bibitem[Mat95]{Mattila}
P.~Mattila.
\newblock {\em Geometry of sets and measures in {E}uclidean spaces}, volume~44
  of {\em Cambridge Studies in Advanced Mathematics}.
\newblock Cambridge University Press, Cambridge, 1995.
\newblock Fractals and rectifiability.

\bibitem[McS34]{McShane}
E.~J. McShane.
\newblock Extension of range of functions.
\newblock {\em Bull. Amer. Math. Soc.}, 40(12):837--842, 1934.

\bibitem[MT10]{confdim}
J.~M. Mackay and J.~T. Tyson.
\newblock {\em Conformal dimension}, volume~54 of {\em University Lecture
  Series}.
\newblock American Mathematical Society, Providence, RI, 2010.
\newblock Theory and application.

\bibitem[Mun00]{munkrestopology}
J.~R. Munkres.
\newblock {\em Topology}.
\newblock Featured Titles for Topology Series. Prentice Hall, Incorporated,
  2000.

\bibitem[Pan89a]{PansuConforme}
P.~Pansu.
\newblock Dimension conforme et sph\`ere \`a l'infini des vari\'{e}t\'{e}s \`a
  courbure n\'{e}gative.
\newblock {\em Ann. Acad. Sci. Fenn. Ser. A I Math.}, 14(2):177--212, 1989.

\bibitem[Pan89b]{pansuconformaldim}
P.~Pansu.
\newblock M\'etriques de {C}arnot-{C}arath\'eodory et quasiisom\'etries des
  espaces sym\'etriques de rang un.
\newblock {\em Ann. of Math. (2)}, 129(1):1--60, 1989.

\bibitem[Szp37]{Szpilrajn}
E.~Szpilrajn.
\newblock {La dimension et la mesure.}
\newblock {\em {Fund. Math.}}, 28:81--89, 1937.

\end{thebibliography}

\Addresses
\end{document}